\documentclass{article}

\usepackage{amsmath}
\usepackage{amssymb}
\usepackage{amsthm}
\usepackage{mathtools}
\usepackage{geometry}
\usepackage{xcolor}
\usepackage{hyperref}

\newtheorem{thm}{Theorem}
\newtheorem{prop}[thm]{Proposition}
\newtheorem{lemma}[thm]{Lemma}
\newtheorem{corollary}[thm]{Corollary}

\theoremstyle{definition}

\newtheorem*{rem}{Remark}
\newtheorem*{rems}{Remarks}

\DeclareMathOperator{\Div}{div}

\newcommand{\NN}{\mathbb{N}}

\newcommand{\sP}{\mathcal{P}}

\newcommand{\RR}{\mathbb{R}}
\newcommand{\ZZ}{\mathbb{Z}}
\newcommand{\innpr}{\cdot}

\begin{document}

\title{Steady three-dimensional rotational flows: existence via Kato's approach to locally coercive problems
}

\author{
B. Buffoni\thanks{Institut de math\'ematiques, Station 8, Ecole Polytechnique
F\'ed\'erale de Lausanne, 1015 Lausanne, Switzerland\\boris.buffoni@epfl.ch
} 
\and E. S\'er\'e\thanks{CEREMADE, Universit\'e Paris-Dauphine, PSL Research University, CNRS, Place de Lattre de Tassigny, 75016 Paris, France\\sere@ceremade.dauphine.fr
}
}

\date{May 2025}

\maketitle

\begin{abstract}
	Stationary flows of an inviscid and incompressible fluid of constant density in the region $D=(0, L)\times \RR^2$, periodic in the second and third variables, are considered.
The flux and the Bernoulli function are prescribed at each point of the boundary $\partial D$.
	The previous existence proof relying on the Nash-Moser iteration scheme is replaced by an adaptation of Kato's approach to locally coercive problems, allowing a more precise statement: the regularity required in Sobolev spaces is the one needed to ensure a basic local coercivity property, and there is a loss of control of only two derivatives in the obtained solutions. The underlying variational structure gives an additional property: the obtained solutions are local minimizers of an integral functional.
	The strategy of proof is first developed for a simpler nonlinear partial differential equation in two variables which satisfies a weaker form of ellipticity. 
\smallskip

\noindent
{\em Keywords:}
	incompressible flows, vorticity, boundary conditions, locally coercive problems, degree theory, minimizers.

\noindent
{\em Mathematics subject classification (AMS, 2020):}
	35Q31, 76B47, 35G60, 47H14, 35A15.

\end{abstract}

\tableofcontents

\section{Introduction}

\subsection{On the stationary Euler equation in dimension $3$}

In \cite{Kato} (see in particular Theorem \ref{thm: Kato} below), Kato showed how degree theory can replace the Nash-Moser iteration scheme in some examples. Following his ideas, we revisit the local existence problem for stationary inviscid flows, as studied in \cite{BuWa} with the help of the Nash-Moser approach.
When the velocity field $v$ is independent of time,
the Euler equation for an inviscid and incompressible fluid
of constant density is given by
$$(v\innpr \nabla)v=-\nabla p,~~\Div v=0,$$
where in this paper the domain is $D=(0,L)\times \RR^2$ with $L>0$,
and the stationary flows are sought periodic in the second and third variables, so that the cell of the periodic lattice is
$$\sP=(0,L)\times (0,P_y)\times(0,P_z)$$
with given  periods $P_y, P_z>0$.
Any constant vector field $\overline v$ is a solution on 
$D$ with constant pressure $\overline p$.
We do not use the Nash-Moser method as in \cite{BuWa}, but the functional spaces are similar, with improvement on the required regularity. 
Let a pair of functions $(f,g)$ be called {\em admissible} (see \cite{Bu:2012, BuWa} and also \cite{Ke}) if
\begin{itemize}
\item
$f$ and $g$ are  of class $C^2(\overline D)$,
\item
$\nabla f$ and $\nabla g$ are $P_y$-periodic in $y$ and $P_z$-periodic in $z$,
\item
$\displaystyle~
(f(x,y,z),g(x,y,z))=(\widetilde f_0(x,y,z),\widetilde g_0(x,y,z)),~\text{for all } (x,y,z)
\in \{0,L\}\times \RR^2,$
\end{itemize}
where $\widetilde f_0$ and $\widetilde g_0$ are two fixed functions
of class $C^2(\overline D)$
such that $\nabla \widetilde f_0$ and $\nabla \widetilde g_0$ 
are $P_y$-periodic in $y$ and $P_z$-periodic in $z$.
The corresponding velocity field
$v=\nabla f\times \nabla g$ is divergence free and its first component
$$v_1=(\nabla f\times\nabla g)\innpr (1,0,0)
=\partial_y f\, \partial_z g-\partial_yg\, \partial_z f
=\partial_y \widetilde f_0\,\partial_z \widetilde g_0-\partial_y\widetilde g_0\,
\partial_z \widetilde f_0$$
is prescribed on $\{0,L\}\times \RR^2$. 
We also assume that
the function $H\colon \RR^2\rightarrow \RR$ is of class $C^1$ and
that $\partial_f H$ and $\partial_g H$  composed with every admissible pair  $(f,g)$ are $(P_y,P_z)$-periodic in $y$ and $z$ (see below the statement of Theorem \ref{thm: hydrodynamics} and the remarks that follow it for a more explicit formulation). 

We are looking for an admissible pair $(f,g)$ satisfying the equations
\begin{equation}\label{eq: deux}
\left(\begin{array}{c}
-\Div(\nabla g \times (\nabla f\times \nabla  g))+
\partial_f H( f, g)
\\
-\Div((\nabla  f\times \nabla  g)\times \nabla f))
+\partial_gH(f,g)
\end{array}\right)=0,
\end{equation}
so that $v \coloneqq \nabla  f\times \nabla g$
is a solution to the Euler equation with 
$p=-\frac 1 2|v|^2
+H(f,g)$.
This has the following variational interpretation. Let the admissible pair $(f_c,g_c)$ be a critical point (in the sense that all directional derivatives at $(f_c,g_c)$  vanish) of the integral functional
\begin{equation}
\label{eq: integral functional}
\int_{\sP}\Big\{ \frac12|\nabla f\times \nabla g|^2+H(f,g)\Big\}\,dx\,dy\,dz
\end{equation}
defined on the set of admissible pairs $(f,g)$.
Then $(f_c,g_c)$ satisfies \eqref{eq: deux}
and $\nabla  f_c\times \nabla g_c$
is a solution to the Euler equation.
See e.g. \cite{BuWa} for more details.

In the following theorem, 
$$H^n_{loc}(D)=\{h:D\rightarrow\RR^\ell\,|\, h\in H^n((0,L)\times]-k,k[^2)\text{ for all }k\in \NN^*\},$$ 
where $\ell\geq 1$ is often implicit. In other words, the index "loc" is for the $y$ and $z$ variables only, not for the $x$ variable.
The aim of the paper is to provide a simpler proof and improve the minimal regularity required in \cite{BuWa} as follows:
the regularity conditions analogous to  $H^7_{loc}$ and $H^5_{loc}$ below are,
in Theorem 1.1 in \cite{BuWa}, of the type $H^{13}_{loc}$ and $H^6_{loc}$.

\begin{thm}
\label{thm: hydrodynamics}
Let us fix $L,P_y,P_z>0$ and affine functions $\overline f$ and $\overline g$ on $D$ such that
$$\overline v_{1}\neq 0 
~~\text{ with }~~ \overline v=\nabla \overline f\times \nabla \overline g=(\overline v_{1},\overline v_{2},\overline v_{3}).$$
Consider
a  pair $(\widetilde f_0,\widetilde g_0)\in H^7_{loc}(D)$ such that $\widetilde f_0-\overline f$ and $\widetilde g_0-\overline g$ are $(P_y,P_z)$-periodic in $y$ and $z$, 
and a function $H:\RR^2\rightarrow \RR$ of class $C^6$ such that $\nabla H$ admits the vectorial periods $(P_y\partial_2\overline f,P_y\partial_2 \overline g)$ and $(P_z\partial_3\overline f,P_z\partial_3\overline g)$ (these two vectors are constant and there is no periodicity condition on $H$ itself):
\begin{equation}
\label{eq: periodicity}
\forall (u,v)\in \mathbb R^2~~\nabla H(u+P_y\partial_2 \overline f,v+P_y\partial_2 \overline g)=\nabla H(u,v)=\nabla H(u+P_z\partial_3 \overline f,v+P_z\partial_3 \overline g).
\end{equation}

Assume that, for some small enough  $\xi\in(0,1]$,
\begin{equation}
\label{eq: estimates in r square}
||(\nabla \widetilde f_0-\nabla \overline f,\nabla \widetilde g_0-\nabla \overline g)||_{H^6(\sP)}\leq \xi r
~~\text{ and }~~ 
||\nabla H||_{W^{5,\infty}(\RR^2)}\leq \xi r
\end{equation}
for some $r\in]0,1]$ as small as needed (in a sense that can depend on $\xi$).
Define $H_{0,per}^5(\sP)$ as the Banach space made of pairs  $(F,G)\in H^5_{loc}(D)$ such that $(F,G)$ vanishes on $\{0,L\}\times \RR^2$ and is $(P_y,P_z)$-periodic in $y$ and $z$.
We endow $H_{0,per}^5(\sP)$ with the standard norm, the square of which is defined by
\begin{equation*}
||(F,G)||_{H^5(\sP)}^2:=\sum_{\alpha_1+\alpha_2+\alpha_3\leq 5}||\partial_{x}^{\alpha_1}\partial_y^{\alpha_2}\partial_z^{\alpha_3} F||_{L^2(\sP)}^2
\end{equation*}
for non-negative integers $\alpha_1,\alpha_2,\alpha_3$.
In \eqref{eq: estimates in r square}, if $\xi\in]0,1]$ is chosen small enough, and $(\widetilde f_0,\widetilde g_0)$ and $H$ 
are chosen such that \eqref{eq: estimates in r square} is satisfied for some small enough $r\in]0,1]$ (in a sense that depends on $\xi$), then there exists a solution $(F,G)\in \overline B(0,r)\subset H_{0,per}^5(\sP)$ of 
$$
\left(\begin{array}{c}
-\Div(\nabla (\widetilde g_0+G) \times (\nabla (\widetilde f_0+F)\times \nabla (\widetilde g_0+G)))+
\partial_f H(\widetilde f_0+F,\widetilde g_0+G)
\\
-\Div((\nabla (\widetilde f_0+F)\times \nabla (\widetilde g_0+G))\times \nabla (\widetilde f_0+F)))
+\partial_gH(\widetilde f_0+F,\widetilde g_0+G)
\end{array}\right)=0
$$
(that is, \eqref{eq: deux} with $(f,g)=(\widetilde f_0+F,\widetilde g_0+G)$).
\end{thm}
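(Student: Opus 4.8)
Here is how I would approach the proof. The plan is to recast the system as an abstract equation $\Phi(U)=0$ for $U=(F,G)$ in the Banach space $X:=H^5_{0,per}(\sP)$, and then to feed it into the degree-theoretic scheme already developed in the previous section for the two-variable model equation — itself an adaptation of Kato's Theorem~\ref{thm: Kato}. The role played there by the "weaker form of ellipticity'' is played here by a \emph{partial} coercivity of the linearisation at the affine state, available precisely because $\overline v_1\neq0$. I expect the whole difficulty to be concentrated in one a priori estimate; everything else is routine.

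\textbf{Step 1: reduction and functional setting.} Because $\overline v_1\neq0$, the vectors $e_1,\nabla\overline f,\nabla\overline g$ form a basis of $\RR^3$, so there is an invertible linear change of variables preserving the foliation $\{x=\mathrm{const}\}$ (hence the slab structure of $D$, at the cost of rescaling $L$, replacing the lattice by another one, and inserting a fixed constant metric factor which I suppress) that brings us to $\overline f=y$, $\overline g=z$, $\overline v=(1,0,0)$. Writing $f=\widetilde f_0+F$ and $g=\widetilde g_0+G$, the left-hand side of the system defines a map $\Phi$ from a ball $\overline B_X(0,\rho)$ into $Y:=\bigl(H^3_{per}(\sP)\bigr)^2$: the quasilinear divergence-form terms (a degree-three polynomial in $\nabla f,\nabla g$, with "coefficients'' built from $\nabla\widetilde f_0,\nabla\widetilde g_0\in H^6$) land in $H^3$ because $\nabla f,\nabla g\in H^4$ and $H^4(\sP)$ is a Banach algebra continuously embedded in $C^0(\overline{\sP})$, while $\partial_fH(f,g)$ and $\partial_gH(f,g)$ land in $H^5\subset H^3$ by the Moser composition estimate for $\psi\circ u$ with $\psi\in C^5$ and $u\in H^5$. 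Since $H\in C^6$, the map $\Phi$ is $C^2$ on $\overline B_X(0,\rho)$, with derivative norms controlled by $\|(\widetilde f_0,\widetilde g_0)\|$ and $\|\nabla H\|_{W^{5,\infty}}$. The periodicity hypothesis \eqref{eq: periodicity}, combined with $\overline f,\overline g$ affine and $F,G,\widetilde f_0-\overline f,\widetilde g_0-\overline g$ $(P_y,P_z)$-periodic, guarantees that $\partial_fH(f,g)$ and $\partial_gH(f,g)$ are $(P_y,P_z)$-periodic, so $\Phi$ indeed maps into $Y$; and $\Phi$ vanishes at the affine pair up to $\nabla H$, so \eqref{eq: estimates in r square} gives $\|\Phi(0)\|_{H^5(\sP)}\le C_0\,\xi r$.

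\textbf{Step 2: local coercivity and the tame estimate — the crux.} Write $L:=D\Phi(0)=L_{\mathrm{aff}}+R$, with $L_{\mathrm{aff}}$ the linearisation frozen at $(y,z)$ and $\|R\|_{X\to Y}\le C_1\,\xi r$ by the algebra/composition estimates and \eqref{eq: estimates in r square}. Since the system is the Euler--Lagrange equation of \eqref{eq: integral functional}, $L_{\mathrm{aff}}$ is formally self-adjoint and
\[
\langle L_{\mathrm{aff}}U,U\rangle_{L^2(\sP)}=Q(U):=\int_{\sP}\bigl((\partial_yF+\partial_zG)^2+(\partial_xF)^2+(\partial_xG)^2\bigr),
\]
the cross term $2\,\overline v\innpr(\nabla F\times\nabla G)=2\,\Div\bigl((F\nabla G)\times\overline v\bigr)$ integrating to zero on $X$ by $F|_{\{0,L\}\times\RR^2}=0$ and periodicity; explicitly $L_{\mathrm{aff}}$ is the constant-coefficient operator with $(L_{\mathrm{aff}}U)_1=-\partial_x^2F-\partial_y(\partial_yF+\partial_zG)$ and $(L_{\mathrm{aff}}U)_2=-\partial_x^2G-\partial_z(\partial_yF+\partial_zG)$. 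The decisive structural fact is that, in the Helmholtz decomposition of $U$ in the $(y,z)$-variables, $U=\nabla_{yz}\phi+\nabla_{yz}^{\perp}\psi+U_0$ (with $\phi,\psi$ of zero $(y,z)$-mean and $U_0$ the $(y,z)$-mean, all vanishing at $x\in\{0,L\}$), $L_{\mathrm{aff}}$ is block-diagonal: it acts as the genuinely elliptic operator $-\Delta$ on the $\phi$-block and as $-\partial_x^2$ on the $\psi$- and $U_0$-blocks. Hence $Q>0$ (so $L_{\mathrm{aff}}$ is injective on $X$), the coercivity is only partial — $Q(U)$ controls $\|U\|_{L^2}^2$ (a one-dimensional Poincar\'e inequality in $x$, which is where $\overline v_1\neq0$ enters), $\|\phi\|_{H^2_{yz}}^2$ and $\|\partial_xU\|_{L^2}^2$, but \emph{not} the $(y,z)$-derivatives of $\psi$ — and $L_{\mathrm{aff}}$ is not an isomorphism $H^3\to H^5$. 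Even so, for $U\in\overline B_X(0,\rho)$ solving $\Phi(U)=0$, written $L_{\mathrm{aff}}U=-\bigl(\Phi(0)+RU+N(U)\bigr)$ with $N(U)$ the divergence of the quadratic and cubic terms in $\nabla U$ plus a small lower-order $H$-remainder, one aims at the \emph{tame} a priori estimate
\[
\|U\|_{H^5}\le C_2\bigl(\xi r+\xi r\,\|U\|_{H^5}+\|U\|_{H^5}^2\bigr),
\]
which for $\xi$ and then $r$ small forces $\|U\|_{H^5}\le C_2'\,\xi r<r$. The $\phi$-block is immediate (it solves $-\Delta\phi=(\,\cdot\,)_\phi\in H^3$, so $\phi\in H^5$ by elliptic regularity). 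The point where the merely partial coercivity must be supplemented — and what I expect to be the main obstacle — is the degenerate $\psi$-block, which solves $-\partial_x^2\psi=\Delta_{yz}^{-1}\bigl(\partial_z(\,\cdot\,)_1-\partial_y(\,\cdot\,)_2\bigr)$: here one uses, exactly as for the model equation, that the Helmholtz projection $\Delta_{yz}^{-1}(\partial_z\cdot-\partial_y\cdot)$ applied to the quasilinear right-hand side restores derivatives transverse to $\overline v$, while the inversion of $-\partial_x^2$ with zero data at $x\in\{0,L\}$ is lossless (the admissible frequencies obey $|\xi_1|\ge\pi/L$, so "$\partial_x^{-1}$'' is bounded on the relevant anisotropic Sobolev scales). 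This is the only genuinely delicate estimate; the rest is bookkeeping with Sobolev product, composition and interpolation inequalities, and it is the $H^7\to H^5$ loss of exactly two derivatives that appears, coming from the single divergence in the equation.

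\textbf{Step 3: existence by degree and conclusion.} Because $L_{\mathrm{aff}}$ is not an isomorphism between the relevant spaces, no contraction or implicit-function argument is available; instead, following the model-problem section, one regularises $\Phi$ (mollifying the $(y,z)$-dependence, or truncating the $(y,z)$-Fourier modes) to maps $\Phi^{(n)}$ for which the degenerate block of the linearisation becomes boundedly invertible with the expected gain, so that $\Phi^{(n)}(U)=0$ is equivalent to a fixed point $U=K^{(n)}(U)$ with $K^{(n)}$ compact on $X$. The estimate of Step~2 holds uniformly in $n$ and along the homotopy $U\mapsto(1-t)\,\Phi^{(n)}(U)+t\bigl(L_{\mathrm{aff}}U+\Phi^{(n)}(0)\bigr)$, so no solution lies on $\{\|U\|_X=r\}$; computing the Leray--Schauder degree at the linear end — where $U\mapsto L_{\mathrm{aff}}U+\Phi^{(n)}(0)$ has the single nondegenerate zero $U^{*}=-L_{\mathrm{aff}}^{-1}\Phi^{(n)}(0)\in\overline B_X(0,r)$, by injectivity of $L_{\mathrm{aff}}$ — one gets degree $\pm1$, hence $\Phi^{(n)}(U_n)=0$ for some $U_n\in\overline B_X(0,r)$. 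The uniform $H^5$-bound together with the compact embedding $H^5(\sP)\hookrightarrow H^4(\sP)$ lets one pass to the limit $n\to\infty$, producing $U=(F,G)\in\overline B_X(0,r)$ with $\Phi(U)=0$. Undoing the change of variables of Step~1 gives the solution claimed. (Since $Q>0$, the second variation of \eqref{eq: integral functional} at $(\widetilde f_0+F,\widetilde g_0+G)$ is positive in the sense corresponding to this partial coercivity, so $U$ is in fact a local minimiser — the additional property announced in the abstract.)
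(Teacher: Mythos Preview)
Your Step~2 contains a real gap, and it is precisely the place you flag as ``the main obstacle''. The Helmholtz block-decomposition of $L_{\mathrm{aff}}$ is correct, and so is the observation that on the $\psi$-block the operator degenerates to $-\partial_x^2$. But the a~priori estimate you then claim for this block does not follow: from $-\partial_x^2\psi=(\text{RHS})_\psi\in H^3$ you only get control of $x$-derivatives of $\psi$; nothing here bounds $\|\partial_y^5\psi\|_{L^2}$ or $\|\partial_z^5\psi\|_{L^2}$, because the Helmholtz projector $\Delta_{yz}^{-1}(\partial_z\cdot-\partial_y\cdot)$ is order zero in $(y,z)$ and so does not ``restore'' any tangential derivative. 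Your sentence ``the Helmholtz projection \dots\ restores derivatives transverse to $\overline v$'' is the whole argument for the degenerate block, and it is not correct as stated. Without this the tame estimate does not close, and Step~3 cannot proceed.

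The paper's proof does \emph{not} try to invert $L_{\mathrm{aff}}$ block by block. Instead it exploits exactly the positivity you already noticed---$Q(U)\ge c\|U\|_{L^2}^2$ via Poincar\'e in $x$---but applied to the \emph{differentiated} unknown: one pairs $\partial_y^5$ of the equation with $\partial_y^5(F,G)$ and uses Theorem~\ref{thm: quadratic part} (the quantitative version of $Q>0$, valid at the perturbed state) to get $\ge K_1\|\partial_y^5(F,G)\|_{L^2}^2-\text{Const}(r^3+\xi r^2)$; likewise for $\partial_z^5$. A third estimate pairs $\partial_x^3$ of the equation with $\partial_x^5(F,G)$ and uses only the pointwise lower bound of Proposition~\ref{prop: simple inequality} to control $\|\partial_x^5(F,G)\|_{L^2}^2$, and a fourth pairs the equation with $(F,G)$ directly. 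These four estimates are then combined with weights $\lambda_1,\lambda_2$ inside a single map
\[
A(F,G)=(\partial_{x}^s)^*\partial_{x}^{s-2}(\cdot)-\lambda_1\!\sum_{j=2,3}(-1)^s\partial_{x_j}^{2s}(\cdot)-\lambda_2(\cdot)
\]
so that $\langle W,AW\rangle_{V\times V^*}\ge L\|W\|_Y^2-L^{-1}\|W\|_Y^3$ on $V\cap\partial K$. This is the hypothesis of Kato's Theorem~\ref{thm: Kato}, applied directly; there is no regularisation step, no Leray--Schauder homotopy on the original problem, and no attempt to make $L_{\mathrm{aff}}$ invertible. The conclusion $A(K)\ni 0$ gives a $U$ with $AU=0$, and the analogue of Proposition~\ref{prop: Aw=0 w=0} (injectivity of the linear combination of differential operators defining $A$) then yields the PDE. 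In short: the right move is not to invert the degenerate linearisation but to differentiate tangentially and use the $L^2$-level coercivity on $\partial_y^5U$, $\partial_z^5U$ themselves---this is what produces the missing tangential control that your block-by-block scheme cannot.
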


\begin{rems}
For all $(F,G)\in H_{0,per}^5(\sP)$, it easily checked that \eqref{eq: periodicity} implies that $\nabla H(\widetilde f_0+F,\widetilde g_0+G)$ is $(P_y,P_z)$-periodic in $y$ and $z$.
Moreover note that if $(F,G)\in H_{0,per}^5(\sP)$, then $(\widetilde f_0+F,\widetilde g_0+G)$ is an admissible pair, in the sense defined above, and of class $C^3$, the regularity required when bounding from below the quadratic functional of Theorem \ref{thm: quadratic part}. Finally, let us mention that the solution found in Theorem \ref{thm: hydrodynamics} is locally unique: this will be proved in Section \ref{section: minimizer} where a variational interpretation will be given.
\end{rems}

\vspace{2mm}

\subsection{A simpler problem in dimension $2$}
To explain the method, we now introduce a simpler two-dimensional PDE.
Let 
$$a_1=a_1(x_1,x_2,z_1,z_2)= a_1(x,z) \text{ and }  a_2=a_2(x_1,x_2,z_1,z_2)=a_2(x,z)\in \RR,
$$
$$x_1\in[0,1],~x_2,z_1,z_2\in \RR,$$ 
be two given functions of four real variables, supposed of class $C^{s+1}$ with $s\geq 5$, having the period $1$ with respect to $x_2$.
We shall consider
$$(x_1,x_2)\rightarrow \Big(a_1\Big(x_1,x_2,\partial_{x_1}u(x_1,x_2),\partial_{x_2}u(x_1,x_2)\Big),a_2\Big(x_1,x_2,\partial_{x_1}u(x_1,x_2),\partial_{x_2}u(x_1,x_2)\Big)\Big)$$
$$=a(x,\nabla u(x))\in \RR^2,$$
where the (unknown) function $u$ depends on the variables $x_1,x_2$, is periodic in $x_2$, the periodicity domain being noted $\sP=(0,1)^2$ ($x_1\in(0,1)$, period $1$ in $x_2$).
We assume $u\in H^{s}_{0,per}(\sP)$ (homogeneous Dirichlet's condition at $x_1=0$ and $x_1=1$, and periodicity in $x_2$, no Dirichlet's condition for the first order partial derivatives).
All first order partial derivatives of $u$ are continuous on $\overline \sP:=[0,1]\times (0,1)$ ($x_1\in[(0,1]$, period $1$ in $x_2$), thanks to the Sobolev injection $H^{s-1}_{per}(\sP)\subset C(\overline\sP)$.

We shall deal with the following problem: given $h\in H^{s}_{per}(\sP)$, find $u:[0,1]\times \RR\rightarrow\RR$ such that
\begin{equation}
\label{eq: the toy equation}
\operatorname{div}(a(x,\nabla u))=h,~~u(x_1,x_2+1)=u(x_1,x_2),~~u(0,x_2)=u(1,x_2)=0,
\end{equation}
where $u\in Y=Y_s$, with
$$s\geq 5 \text{ (an integer)},~~Y=Y_s=H^s_{0,per}(\sP),~~Y^*=Y_s^*=Y' \text{ (topological dual)},~~
$$
$$
~~V=V_s=H^{s+2}_{0,per}(\sP),~~V^*=V_s^*=V'\,,$$
and the two canonical dualities $<\cdot,\cdot>_{Y\times Y^*}$ and $<\cdot,\cdot>_{V\times V^*}$ are compatible with the $L^2(\sP)$ scalar product.
In contrast with usual elliptic problems, $u$ is sought of the same regularity as $h$, and not in $V$ ("loss of control of two derivatives").

In addition, we suppose the following "symmetry" property 
\begin{equation}
\label{eq: symmetry}
\partial_{z_p}a_i=\partial_{z_i}a_p ~\text{ for all }~ i,p\in\{1,2\}
\end{equation}
and that there exists $\rho>0$ such that, for all $z\in \RR^2$ with $|z|\leq \rho$, all $x_1\in[0,1]$ and all $x_2\in\RR$, 
\begin{equation}
\label{eq: hyp on a}
a(x_1,x_2,0,0)=(0,0)~~\text{ and }~~\forall p\in \RR^2~~p^TJ_a(x,z)p\geq \rho p_1^2,
\end{equation}
where $J_a(x,z)$ is the Jacobian matrix of $a$ with respect to $(z_1,z_2)$ at $(x,z)$ (symmetric by \eqref{eq: symmetry}). In \eqref{eq: hyp on a}, only $p_1^2$ occurs in the right-hand side of the inequality and thus the Jacobian matrix $J_a(x,z)$ is not necessarily positive definite.
As observed by Kato \cite{Kato} in his context, the hypothesis $a(x_1,x_2,0,0)=(0,0)$ can be released by replacing $h$ with $h-\operatorname{div}a(x_1,x_2,0,0)$ and $a$ with $a-a(x_1,x_2,0,0)$.
Note that $\eqref{eq: symmetry}$, which is used in Paragraph \ref{subsection: KoNi} to check \eqref{eq: conclusion} below, implies that the problem has a variational structure. This will allow us in Section \ref{section: minimizer} to interpret the obtained solutions as local minimizers.

With the help of Kato's approach presented in \cite{Kato}, we shall prove the following theorem for the toy problem, which is in the spirit of Theorem \ref{thm: hydrodynamics} above.
\begin{thm}
\label{thm: main}
Under conditions \eqref{eq: symmetry} and \eqref{eq: hyp on a}, let $r>0$ be small enough and $s\geq 5$.
If 
\begin{equation} \label{eq: smallness}
\left\|\max_{|z|\leq \rho,\,|\mu|\geq 1,|\mu|+|\nu| \leq s+1}|\partial^{(\mu_1,\mu_2,\nu_1,\nu_2)} a(x,z)|\,\right\|_{L^{2}(\sP)}~~\text{ is small enough}
\end{equation}
($\rho>0$ as in \eqref{eq: hyp on a}), then
for any  $h\in H^{s}_{per}(\sP)$ with $||h||_{H^{s}(\sP)}$ small enough,
there exists $u\in Y$ such that $\operatorname{div}(a(x,\nabla u))=h$ and $||u||_Y\leq r$.
If $a$ does not depend on $x$ (that is, $a=a(z)$), then \eqref{eq: smallness} holds trivially.
\end{thm}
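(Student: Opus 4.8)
## Proof proposal for Theorem~\ref{thm: main}

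\textbf{Overall strategy.}
The plan is to follow Kato's degree-theoretic scheme for locally coercive problems, as advertised in the introduction, rather than any Nash--Moser iteration. The core idea is to set up the equation $\operatorname{div}(a(x,\nabla u)) = h$ as a fixed-point/zero problem for a map between the pair of spaces $Y = H^s_{0,per}(\sP)$ and $V = H^{s+2}_{0,per}(\sP)$, where the ``loss of two derivatives'' is built into the functional-analytic setup. First I would linearize: write $a(x,\nabla u) = J_a(x,0)\nabla u + R(x,\nabla u)$, where the remainder $R$ collects the nonlinear part and, by the smallness hypothesis \eqref{eq: smallness} together with $a(x,0)=0$, is quadratically small in $\nabla u$ on the ball $|z|\le\rho$. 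The principal linear operator is then $L u := \operatorname{div}(J_a(x,0)\nabla u)$, which by the coercivity \eqref{eq: hyp on a} (in the degenerate form $p^T J_a p \ge \rho p_1^2$) satisfies only a \emph{directional} ellipticity in the $x_1$-variable. The key point is that this weak coercivity, combined with the symmetry \eqref{eq: symmetry} (variational structure) and the periodicity/Dirichlet boundary conditions, must still yield enough of an a priori estimate to run a degree argument.

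\textbf{The quadratic-form estimate (the main obstacle).}
I expect the hardest step to be the analogue of ``Theorem~\ref{thm: quadratic part}'' alluded to in the excerpt: namely, establishing the local coercivity estimate for the quadratic functional associated to $L$ (after differentiation in $x,x_2$ up to order $s$), of the form
\begin{equation*}
\langle L w, w\rangle_{\text{(suitably interpreted)}} \ \geq\ c\,\|w\|_{Y}^2 \ -\ (\text{lower-order / small terms}),
\end{equation*}
even though $J_a(x,0)$ controls only $(\partial_{x_1} w)^2$ pointwise, not $|\nabla w|^2$. The way around this — this is presumably where \eqref{eq: symmetry} and the structure of the problem are essential — is that the $x_1$-derivative together with the Dirichlet conditions at $x_1=0,1$ recovers control of $w$ itself via Poincaré in $x_1$, and then the equation $\operatorname{div}(a(x,\nabla u))=h$ is used to trade the ``missing'' derivative $\partial_{x_2}$-control: differentiating the equation and pairing against appropriate test functions, the bad second-order $x_2x_2$-terms either cancel by the symmetry \eqref{eq: symmetry} or get absorbed into the small quantity \eqref{eq: smallness} and into a norm at the level of $V$, which is two derivatives higher. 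One must be careful that the commutators arising from differentiating the $x$-dependent coefficients $a(x,z)$ up to order $s$ are controlled — this is exactly why $a$ is assumed $C^{s+1}$ and why \eqref{eq: smallness} quantifies all partials $\partial^{(\mu,\nu)}a$ with $|\mu|\ge 1$, $|\mu|+|\nu|\le s+1$ in $L^2(\sP)$; when $a=a(z)$ only, all these $x$-derivatives vanish and \eqref{eq: smallness} is automatic, as the statement notes.

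\textbf{The degree argument.}
Granting the coercivity estimate, I would then proceed as in Kato \cite{Kato}, Theorem~\ref{thm: Kato}: consider the homotopy $t\mapsto \Phi_t(u) := \operatorname{div}(a_t(x,\nabla u)) - t\,h$ for $t\in[0,1]$ with, say, $a_t = $ a suitable rescaling interpolating between the linear model ($t=0$) and the full nonlinearity ($t=1$), all staying within the regime $|z|\le\rho$, $\|\cdot\| \le r$ by the smallness of $h$ and of \eqref{eq: smallness}. One shows: (i) the a priori bound $\|u\|_Y \le r$ holds for any solution of $\Phi_t(u)=0$, so no zeros appear on the boundary sphere $\|u\|_Y = r$ for any $t$ — here the coercivity estimate is what prevents escape of solutions; (ii) the relevant map, after precomposition with the solution operator of the linear problem $L$ (which maps $Y$ into $V$ and thus, re-injected, gives a compact perturbation setup between $Y$ and $V$), is of the form identity-plus-compact or otherwise admits a well-defined Leray--Schauder degree on $B(0,r)\subset Y$; (iii) at $t=0$ the degree is nonzero (the linear problem $Lu = 0$ has only $u=0$ in the ball, by coercivity, with degree $\pm 1$). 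Homotopy invariance of the degree then yields a zero of $\Phi_1$, i.e. a solution $u\in Y$ with $\|u\|_Y \le r$ of $\operatorname{div}(a(x,\nabla u)) = h$. The regularity bookkeeping — checking that all maps are continuous (indeed $C^1$) between the stated Sobolev spaces, using $H^{s-1}_{per}(\sP)\subset C(\overline\sP)$ so that $\nabla u$ genuinely stays in $\{|z|\le\rho\}$ pointwise — is routine given $s\ge 5$, but it is what pins down the precise regularity threshold and the two-derivative loss claimed in the theorem.
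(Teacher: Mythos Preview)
Your proposal has the right high-level spirit (Kato's local-coercivity framework), but there is a genuine gap in step (ii) of your degree argument. You propose to precompose with ``the solution operator of the linear problem $L$ (which maps $Y$ into $V$)'' in order to obtain an identity-plus-compact map suitable for Leray--Schauder degree. But the linear operator $Lu=\operatorname{div}(J_a(x,0)\nabla u)$ is \emph{not} elliptic: hypothesis \eqref{eq: hyp on a} only gives $p^TJ_a(x,0)p\ge\rho\,p_1^2$, with no control of $p_2^2$. Such a degenerate operator has no solution operator gaining two derivatives (nor any, in general), so the compact-perturbation setup you describe is unavailable. Relatedly, your sentence ``the equation is used to trade the missing $\partial_{x_2}$-control'' does not explain where $\|\partial_{x_2}^s w\|_{L^2}$-control actually comes from; in the paper it is not obtained from the equation but is built into the choice of $A$.

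The paper never inverts anything. It applies Theorem~\ref{thm: Kato} as a black box to a map $A:K\subset Y\to V^*$ that is \emph{not} $u\mapsto\operatorname{div}(a(x,\nabla u))$ but a weighted combination of three terms (see \eqref{eq: choice of A}):
\[
Aw=(\partial_{x_1}^s)^*\partial_{x_1}^{s-2}\operatorname{div}(a(x,\nabla w))
-\lambda_1(-1)^s\partial_{x_2}^{s}\operatorname{div}\,\partial_{x_2}\partial_{x_2}^{s-1}(a(x,\nabla w))
-\lambda_2\operatorname{div}(a(x,\nabla w)).
\]
Pairing $\langle w,Aw\rangle_{V\times V^*}$, the three terms supply respectively control of $\|\partial_{x_1}^s w\|_{L^2}^2$ (via \eqref{eq: consequence of semi-coer}), $\|\partial_{x_2}^s w\|_{L^2}^2$ (via \eqref{eq: conclusion}, where the symmetry \eqref{eq: symmetry} is used Kohn--Nirenberg style on the worst commutators $I_2+I_3$), and $\|\partial_{x_1}w\|_{L^2}^2$ (via \eqref{eq: simplified semi-coer}); $\lambda_1,\lambda_2$ are then chosen large so that cross-errors are absorbed (Proposition~\ref{prop: verification semi-coercivity}). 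Kato's theorem yields $A(K)\supset\overline B(0,\beta r^{-1})$ in $Y^*$. A final ingredient you do not anticipate is Proposition~\ref{prop: Aw=0 w=0}: since $A$ is not the PDE operator itself, one must still show that $Aw=f$ (with $f$ built from $h$) forces $\operatorname{div}(a(x,\nabla w))=h$; this is an injectivity statement for the linear map $-(\partial_{x_1}^s)^*\partial_{x_1}^{s-2}+\lambda_1(-1)^s\partial_{x_2}^{2s}+\lambda_2$, proved by Fourier series in $x_2$ and self-adjoint ODE theory in $x_1$.
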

The point is that the partial differential equation is not elliptic, and thus it is not possible, in a perturbative argument for example, to rely on standard results for elliptic problems. However its linearization $\operatorname{div}(J_a(x_1,x_2,0,0)\nabla u)=0$ is of the form studied by Kohn and Nirenberg in \cite{KoNi}, and the present problem might be studied with the Nash-Moser method (see \cite{Ki,BuWa} for more complicated nonlinear problems of the same kind handled with the Nash-Moser method). Our aim is to avoid the latter method, but instead to rely on the more "elementary" approach advocated by Kato. As this toy problem has a simple structure, other methods are probably possible, but this partial differential equation is only considered here in view of the more involved application to hydrodynamics of Section \ref{section: hydro}. In particular, it might be possible to get some kind of (weak) solution by minimization, under appropriate assumptions. The next theorem is the main result of \cite{Kato} stated in the special case of the present context:
\begin{thm}[Kato \cite{Kato}]
\label{thm: Kato}
Let $K$ be  the closed ball in $Y$ of radius $r>0$ centered at the origin $0\in Y$, and consider a weakly sequentially continuous map $A:K\subset Y \rightarrow  V^*$  such that 
\begin{equation}
\label{eq: Kato coercivity}
<v,Av>_{V\times V^*}\, \geq \, \beta\geq 0~~\text{ for all }~~v\in V\cap \partial K.
\end{equation}
Then $A(K)$ contains the closed ball centered at the origin $0\in Y^*$ of radius $\beta r^{-1}$ in $Y^*$ (seen as a subset of $V^*$: $V\subset Y$ and $Y^*\subset V^*$ in the standard way, see \eqref{eq: map J}).
\end{thm}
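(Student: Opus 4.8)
The plan is a Galerkin approximation combined with a Brouwer/degree argument, carried out along the scale $V\subset Y\subset L^2(\sP)\subset Y^*\subset V^*$ in which all inclusions are continuous, the first one is dense, and all dual pairings extend the $L^2(\sP)$ scalar product. Fix $w\in Y^*$ with $\|w\|_{Y^*}\le \beta r^{-1}$, regarded as an element of $V^*$; it suffices to find $u\in K$ with $Au=w$. Since $V$ is a separable Hilbert space, choose an increasing sequence of nonzero finite-dimensional subspaces $V_1\subset V_2\subset\cdots\subset V$ with $\overline{\bigcup_n V_n}=V$. Each $V_n$ is a finite-dimensional subspace of $L^2(\sP)$, hence a Hilbert space for $(\cdot,\cdot)_{L^2(\sP)}$; define $F_n\colon K\cap V_n\to V_n$ by letting $F_n(v)$ be the Riesz representative in $(V_n,(\cdot,\cdot)_{L^2(\sP)})$ of the restriction to $V_n$ of the functional $Av-w\in V^*$, i.e.\ $(F_n(v),\phi)_{L^2(\sP)}=\langle\phi,Av-w\rangle_{V\times V^*}$ for all $\phi\in V_n$.

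First I would check that $F_n$ is continuous on the compact convex body $C_n:=K\cap V_n=\{v\in V_n:\|v\|_Y\le r\}$. If $v_k\to v$ in $V_n$ (all norms on $V_n$ being equivalent), then $v_k\rightharpoonup v$ weakly in $Y$, so weak sequential continuity of $A$ gives $Av_k\rightharpoonup Av$ in $V^*$, whence $(F_n(v_k),\phi)_{L^2(\sP)}\to(F_n(v),\phi)_{L^2(\sP)}$ for every $\phi\in V_n$; in the finite-dimensional space $V_n$ this forces $F_n(v_k)\to F_n(v)$. Next, for $v\in V_n$ with $\|v\|_Y=r$ we have $v\in V\cap\partial K$, and since $w\in Y^*$,
$$(F_n(v),v)_{L^2(\sP)}=\langle v,Av\rangle_{V\times V^*}-\langle v,w\rangle_{Y\times Y^*}\ \ge\ \beta-\|v\|_Y\,\|w\|_{Y^*}\ \ge\ \beta-r\cdot\beta r^{-1}=0,$$
using \eqref{eq: Kato coercivity} and the bound on $w$. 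By the standard consequence of Brouwer's fixed point theorem: if $F_n$ did not vanish on $C_n$, then $v\mapsto -F_n(v)/\mu_{C_n}(-F_n(v))$ (where $\mu_{C_n}$ is the Minkowski functional of $C_n$, which contains $0$ in its interior) would be a continuous self-map of $C_n$ with image in $\partial C_n$, hence would have a fixed point $v_0\in\partial C_n$, giving $(F_n(v_0),v_0)_{L^2(\sP)}=-\mu_{C_n}(-F_n(v_0))\,(v_0,v_0)_{L^2(\sP)}<0$ and contradicting the display; therefore there is $v_n\in C_n$ with $F_n(v_n)=0$, that is, $\langle\phi,Av_n-w\rangle_{V\times V^*}=0$ for all $\phi\in V_n$. (Equivalently, one checks that the homotopy $(1-t)v+tF_n(v)$ is admissible on $\partial C_n$, so $F_n$ has degree $1$ on $C_n$ relative to $0$.)

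To conclude I would pass to the limit. The $v_n$ lie in $K$, hence form a bounded sequence in the Hilbert space $Y$, so along a subsequence $v_{n_k}\rightharpoonup u$ weakly in $Y$ with $u\in K$ (a closed convex set being weakly closed), and $Av_{n_k}\rightharpoonup Au$ in $V^*$ by weak sequential continuity. For $\phi\in\bigcup_n V_n$ one has $\phi\in V_{n_k}$ for $k$ large, so $\langle\phi,Av_{n_k}-w\rangle_{V\times V^*}=0$; passing to the limit gives $\langle\phi,Au-w\rangle_{V\times V^*}=0$, and density of $\bigcup_n V_n$ in $V$ then shows $Au=w$ in $V^*$. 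As $w$ ranges over the closed ball of radius $\beta r^{-1}$ in $Y^*$, this proves that $A(K)$ contains that ball.

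There is no single hard step; the care needed is in the bookkeeping of the four spaces. The Galerkin system must be posed with the $L^2(\sP)$ pairing identifying $V_n$ with its own dual, while the datum $w$ is only in $Y^*\subset V^*$ and the estimate on the boundary sphere is the one paired between $Y$ and $Y^*$. The essential point is that one must not expect more than \emph{boundedness} of $(v_n)$ in $Y$: no strong convergence is available, so it is precisely the \emph{weak} sequential continuity of $A$ into $V^*$ that is used twice — once for the continuity of each $F_n$ (needed to invoke Brouwer) and once for the final passage to the limit.
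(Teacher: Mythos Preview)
Your proof is correct. Note, however, that the paper does not supply its own proof of this theorem: it is stated as ``the main result of \cite{Kato} stated in the special case of the present context'' and simply cited. Your argument---Galerkin reduction to finite dimensions, a Brouwer/degree step using the boundary inequality, then a weak-limit passage exploiting the weak sequential continuity of $A$---is exactly the strategy of Kato's original proof in \cite{Kato}, so there is nothing to compare against here beyond confirming that you have reproduced the intended argument faithfully.

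One small remark on presentation: in your Brouwer contradiction step you implicitly use that $C_n=K\cap V_n$ has nonempty interior in $V_n$ (so that the Minkowski functional is finite and positive on nonzero vectors). This is immediate since $0$ is an interior point of $K$ in $Y$ and $V_n\subset Y$, but it is worth stating, as is the fact that $\partial C_n$ (boundary in $V_n$) is contained in $\partial K$ (boundary in $Y$), which you use when invoking \eqref{eq: Kato coercivity}.
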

"Weakly continuous" means that if $y_j$ converges weakly to $y_0$ in $Y$, then $Ay_j$ converges weakly to $Ay_0$ in $V^*$, see the last paragraph on p. 926 in \cite{Kato}.
The fact that the assumption is for all $v\in V\cap \partial K$ instead of all $v\in \partial K$ again reflects that there is a loss of control of two derivatives in the conclusion.
In addition Kato gives two illuminating examples to illustrate how to apply his theorem; however our choice \eqref{eq: choice of A} of $A$ is more intricate than the ones in his examples. An important ingredient to check assumption
\eqref{eq: Kato coercivity} when proving Theorem \ref{thm: main} is the following local coercivity inequality for some constant $C_{lc}>0$:
$$
\int_{\sP}\sum_{i=1}^2\Big(\partial_{x_2}^s\Big(a_i(x_1,x_2,\partial_{x_1} F,\partial_{x_2}F)\Big)\Big)\partial_{x_2}^s\partial_{x_i}F\, dx_1 dx_2
$$
$$
\geq  C_{lc}  ||\partial_{x_2}^sF||_{L^2(\sP)}^2+  C_{lc}  ||\partial_{x_1}\partial_{x_2}^sF||_{L^2(\sP)}^2- C_{lc} ^{-1}||F||_{H^{s}(\sP)}^3
- C_{lc} ^{-1}\left\|\max_{|z|\leq \rho}|\partial_{x_2}^{s+1}a(x,z)|\,\right\|_{L^2(\sP)}||\partial_{x_2}^{s-1}\nabla F||_{L^2(\sP)}
$$
\begin{equation}
\label{eq: conclusion}
- C_{lc} ^{-1}\left(\left\|\max_{|z|\leq \rho}|\partial_{x_2}^2\nabla_z a(x,z)|\,\right\|_{L^\infty(\sP)} ||\partial_{x_2}^{s-1}\nabla F||_{L^2(\sP)} +||F||_{H^{s-1}(\sP)}\right) ||\partial_{x_2}^{s-1}\nabla F||_{L^2(\sP)}
\end{equation}
for all $F\in H^{s+2}_{0,per}(\sP)$ with $||F||_{H^s(\sP)}$ small enough.
Its proof is different from the analogous ones in the examples in \cite{Kato}; as it is involved,  Subsection \ref{section: proof of crucial inequality} is devoted to it.
The first example in \cite{Kato} consists in a first order equation introduced by Moser in \cite{Moser}, and the second example consists in a higher-order elliptic problem with more derivatives in the nonlinear term than in the linear one, first considered by Rabinowitz in \cite{ Rabinowitz}. Both examples are for periodic conditions and were dealt with initially by the Nash-Moser iteration method \cite{Moser,Rabinowitz}.
In the present work, the Dirichlet condition for $x_1=0$ and $x_1=1$ is the source of new difficulties. To cope with the fact that (higher-order) partial derivatives do not satisfy this Dirichlet condition in general, we make use of the topological dual $(H_{0,per}^s(\sP))'$ of $H_{0,per}^s(\sP)$ and of the dual of the partial derivative $\partial_{x_1}$ when classical integration by parts in $x_1$ is not allowed.
Another feature is that the dependence of certain high-order partial derivatives of $a$ are required to be small enough when $a$ depends on $x_1$ and $x_2$; see \eqref{eq: smallness}.
Finally \eqref{eq: estimates in r square} in the hydrodynamic problem is analogous to condition \eqref{eq: smallness} in the toy problem.

\section{The toy problem}\label{Toy}
\subsection{Proof of Theorem \ref{thm: main}}

Theorem \ref{thm: main} will be proved as a consequence of Theorem \ref{thm: Kato}. Lemma \ref{lemma: weak sequential continuity}  and Proposition \ref{prop: verification semi-coercivity} will allow us to check the hypotheses in Theorem \ref{thm: Kato}.
Then, thanks to Proposition \ref{prop: Aw=0 w=0} we will see that the conclusion of Theorem \ref{thm: Kato} implies the one of Theorem \ref{thm: main}.

The standard injections (usually seen as inclusions)
\begin{equation}
\label{eq: map J}
J:V\rightarrow Y,~~J':Y^*\rightarrow V^*
\end{equation}
are continuous and dense.
Moreover, for all $v\in V$ and $y_*\in Y^*$,
$$<Jv,y_*>_{Y\times Y^*}=<v,J'y_*>_{V\times V^*}$$
by the definition of $J'$; thus the two dualities are compatible as required in \cite{Kato}: if $J$ and $J'$ are not explicitly written, we get indeed
$<v,y_*>_{Y\times Y^*}=<v,y_*>_{V\times V^*}$.

By the compact Sobolev injection $H_{0,per}^s(\sP)\subset C^{s-2}_{per}(\overline{\sP})$, the linear operators
$$\partial_{x_1}: Y\rightarrow C_{per}^{s-3}(\overline \sP)~~\text{ and }~~\partial_{x_2}: Y\rightarrow C_{per}^{s-3}(\overline \sP)$$
are compact.
Thus, for $w\in Y$, 
$$\nabla w\in C_{per}^{s-3}(\overline \sP)^2\,,~~ a(x,\nabla w)\in C_{per}^{s-3}(\overline \sP)^2\,,~$$
$$
\operatorname{div}(a(x,\nabla w))=(\partial_{x_1}a_1)(x,\nabla w)+(\partial_{x_2}a_2)(x,\nabla w)+\operatorname{tr} \Big(J_a(x,\nabla w)\operatorname{Hess}_{w}\Big)\in C_{per}^{s-4}(\overline\sP)\subset L^2(\sP),$$
$$\partial_{x_2}^{s-1}(a(x,\nabla w))\in (L^2(\sP))^2,  ~~~~~\text{(cf Lemma \ref{lemme: controle a} in the Appendix of Section \ref{Toy})}$$
where appear the Jacobian matrix $J_a$ of $a$ with respect to the two last variables $(z_1,z_2)$, and the Hessian matrix of $w\,.$
Moreover, for all $w\in Y$,
\begin{equation}
\label{eq: choice of A}
Aw:=\underbrace{(\partial_{x_1}^s)^*\underbrace{\partial_{x_1}^{s-2}\operatorname{div}(a(x,\nabla w))}_{\in L^2(\sP)}}_{\text{first term}}
-\underbrace{\lambda_1(-1)^s \partial_{x_2}^{s}\operatorname{div}\partial_{x_2}\underbrace{\partial_{x_2}^{s-1}(a(x,\nabla w))}_{\in L^2(\sP)^2}}_{\text{second term}}
-\underbrace{\lambda_2\operatorname{div}(\underbrace{a(x,\nabla w)}_{\in L^2(\sP)^2})}_{\text{third term}}\in V^* 
\end{equation}
for two constants $\lambda_1,\lambda_2>0$ to be carefully chosen later.
In the definition of $A$,
\begin{itemize}
\item first term:
$(\partial_{x_1}^s)^* :L^2(\sP)=L^2(\sP)'\rightarrow V'=V^*$ is the dual operator of  $\partial_{x_1}^s: V\rightarrow  L^2(\sP)$, that is,
$$\forall f\in L^2(\sP)~\forall v\in V~~<v,(\partial_{x_1}^s)^*f>_{V\times V'}\,:=
\int_{\sP}(\partial_{x_1}^sv)f\, dx_1dx_2;$$
Moreover 
$\partial_{x_1}^{s-2}\operatorname{div}(a(x,\nabla w))\in L^2(\sP)$ (cf Lemma \ref{lemme: controle div a});
\item second term:
$\partial_{x_2}^{s}\operatorname{div}\partial_{x_2}$ is the dual of\footnote{This term is differentiated successively $s+2$ times in  $x_2$.}
$(-1)^s\partial_{x_2}\nabla\partial_{x_2}^s:V\rightarrow L^2(\sP)^2$ and $\partial_{x_2}^{s-1}(a(x,\nabla w))\in L^2(\sP)^2=(L^2(\sP)^2)'$ (cf Lemma \ref{lemme: controle a}); here we have used the standard notation for derivatives of distributions);
\item third term:
$\operatorname{div}$ is the dual of $-\nabla: V\rightarrow L^2(\sP)^2$ and $a(x,\nabla w)$ is seen in $L^2(\sP)^2=(L^2(\sP)^2)'$.
\end{itemize}

\begin{lemma}
\label{lemma: weak sequential continuity}
If $w_n\rightharpoonup w$ in $Y$, then $Aw_n \rightharpoonup Aw$ in $V^*$.
\end{lemma}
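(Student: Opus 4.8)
The strategy is to show that each of the three terms in the definition \eqref{eq: choice of A} of $A$ is weakly sequentially continuous from $K\subset Y$ to $V^*$, and then conclude by linearity. The common mechanism is: weak convergence $w_n\rightharpoonup w$ in $Y=H^s_{0,per}(\sP)$ implies, via the \emph{compact} Sobolev injection $H^s_{0,per}(\sP)\hookrightarrow C^{s-2}_{per}(\overline\sP)$, that $w_n\to w$ strongly in $C^{s-2}_{per}(\overline\sP)$, hence $\nabla w_n\to\nabla w$ strongly in $C^{s-3}_{per}(\overline\sP)^2$. Since $\|w_n\|_Y$ is bounded (weakly convergent sequences are bounded), the $z$-arguments $\nabla w_n(x)$ stay in a fixed compact set where $a$ and its derivatives are controlled; this is exactly the setting of the Appendix Lemmas \ref{lemme: controle a} and \ref{lemme: controle div a}.

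\textbf{First I would treat the third term.} We have $a(x,\nabla w_n)\to a(x,\nabla w)$ strongly in, say, $C^{s-3}_{per}(\overline\sP)^2$ (composition of a $C^{s+1}$ function with a strongly convergent argument in $C^{s-3}$, with values in a fixed compact set), hence strongly in $L^2(\sP)^2$. Since $\operatorname{div}:L^2(\sP)^2\to V^*$ is a fixed bounded linear operator (the dual of $-\nabla:V\to L^2(\sP)^2$), the third term converges strongly — a fortiori weakly — in $V^*$.

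\textbf{Next the first term.} Here $\operatorname{div}(a(x,\nabla w_n))\in C^{s-4}_{per}(\overline\sP)$ and, by the chain-rule expansion displayed above together with Lemma \ref{lemme: controle div a}, $\partial_{x_1}^{s-2}\operatorname{div}(a(x,\nabla w_n))$ is bounded in $L^2(\sP)$; I would check it converges \emph{weakly} in $L^2(\sP)$ to $\partial_{x_1}^{s-2}\operatorname{div}(a(x,\nabla w))$. Weak convergence (not strong) is all one can hope for, because the top-order term $\operatorname{tr}(J_a(x,\nabla w_n)\operatorname{Hess}_{w_n})$ contains second derivatives of $w_n$, which are of order $s$ in total after applying $\partial_{x_1}^{s-2}$ and only converge weakly in $L^2$; but the coefficient $J_a(x,\nabla w_n)$ converges strongly in $C^0$, so a product-of-weak-times-strong argument gives weak $L^2$ convergence of the product. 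Since $(\partial_{x_1}^s)^*:L^2(\sP)\to V^*$ is a fixed bounded linear operator, it maps weakly convergent sequences to weakly convergent sequences, yielding weak convergence of the first term in $V^*$. The second term is handled identically, with $\partial_{x_2}$-derivatives in place of $\partial_{x_1}$-derivatives: Lemma \ref{lemme: controle a} bounds $\partial_{x_2}^{s-1}(a(x,\nabla w_n))$ in $L^2(\sP)^2$, the same weak$\,\times\,$strong argument gives weak $L^2$ convergence to $\partial_{x_2}^{s-1}(a(x,\nabla w))$, and the fixed bounded linear operator dual to $(-1)^s\partial_{x_2}\nabla\partial_{x_2}^s$ preserves weak convergence into $V^*$.

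\textbf{The main obstacle} is bookkeeping the top-order terms in the first and second terms: one must carefully isolate, in $\partial_{x_1}^{s-2}\operatorname{div}(a(x,\nabla w_n))$ and in $\partial_{x_2}^{s-1}(a(x,\nabla w_n))$, the pieces containing the highest-order derivatives of $w_n$ (which converge only weakly in $L^2$) from the pieces that are lower-order (which converge strongly), and verify that in every term the ``weak factor'' is multiplied by a ``strongly convergent factor'' built from derivatives of $a$ evaluated along $\nabla w_n$ — this is where Lemmas \ref{lemme: controle a} and \ref{lemme: controle div a} of the Appendix, and the strong $C^{s-3}$ convergence of $\nabla w_n$, do the real work. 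Once that structural decomposition is in place, the conclusion is the elementary fact that weak$\,\times\,$strong $\to$ weak in $L^2$, followed by continuity of the fixed dual operators into $V^*$; summing the three terms gives $Aw_n\rightharpoonup Aw$ in $V^*$.
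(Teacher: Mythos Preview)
Your proposal is correct and follows essentially the same route as the paper: compact embedding gives strong $C^{s-3}$ convergence of $\nabla w_n$, then each of the three $L^2$-pieces converges weakly (Lemmas~\ref{lemme: controle a} and~\ref{lemme: controle div a}), and the fixed bounded dual operators carry this into weak $V^*$-convergence. The only cosmetic difference is that the paper writes out the pairing $\langle v,Aw_n\rangle_{V\times V^*}$ and cites the Appendix lemmas directly for the weak $L^2$ limits, whereas you unpack the weak\,$\times$\,strong mechanism that those lemmas encapsulate.
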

\begin{proof}
If $w_n\rightharpoonup w$ in $Y$, then $\nabla w_n\rightarrow \nabla w$ in $C^{s-3}(\overline\sP)^{2}$. Moreover, for all $v\in V$, we have
$$\langle v,Aw_n\rangle_{V\times V^*}
=\int_{\sP}\left(\partial_{x_1}^s v\right)\left(\partial_{x_1}^{s-2}\operatorname{div}(a(x,\nabla w_n))\right)\, dx_1dx_2
$$$$
-\lambda_1\int_{\sP}\left(\partial_{x_2}\nabla\partial_{x_2}^{s} v\right)\cdot \left(\partial_{x_2}^{s-1}(a(x,\nabla w_n))\right)\, dx_1dx_2
+\lambda_2\int_{\sP}(\nabla v)\cdot a(x,\nabla w_n)\, dx_1dx_2,
$$
$$\partial_{x_1}^{s-2}\operatorname{div}(a(x,\nabla w_n))\rightharpoonup
\partial_{x_1}^{s-2}\operatorname{div}(a(x,\nabla w)),~~
\partial_{x_2}^{s-1}(a(x,\nabla w_n))\rightharpoonup \partial_{x_2}^{s-1}(a(x,\nabla w)),~~
$$$$
a(x,\nabla w_n)\rightharpoonup a(x,\nabla w)
$$
weakly in  $L^2(\sP)$ or $L^2(\sP)^2$.
For the first and second weak limits, see Lemmas  \ref{lemme: controle div a} and \ref{lemme: controle a} in the Appendix of Section \ref{Toy}.
\end{proof}

Before stating Proposition \ref{prop: verification semi-coercivity}, let us give some preliminary local coercivity estimates derived from \eqref{eq: hyp on a}. These estimates will be used in the proofs of the more involved estimate \eqref{eq: conclusion} and of Proposition \ref{prop: verification semi-coercivity}.
\begin{prop}
\label{prop: semi-coerc} Under hypothesis
\eqref{eq: hyp on a} (and assuming that $s\geq 5$), the three following inequalities 
\begin{equation}
\label{eq: equation coercivite}
\int_{\sP}\sum_{i=1}^2\sum_{p=1}^{2}(\partial_{z_p}a_i(x,\nabla F))\Big(\partial_{x_2}^s\partial_{x_p}F\Big)\Big(\partial_{x_2}^s\partial_{x_i}F\Big)\, dx_1 dx_2 \geq  C_{lc}  ||\partial_{x_2}^sF||_{L^2(\sP)}^2+  C_{lc}  ||\partial_{x_1}\partial_{x_2}^sF||_{L^2(\sP)}^2\,,
\end{equation}
\begin{equation}
\label{eq: simplified semi-coer}
\int_{\sP}(\nabla F)\cdot a(x,\nabla F)\, dx_1dx_2
=\int_{\sP}\sum_{i=1}^2a_i(x,\nabla F)\partial_{x_i}F\, dx_1 dx_2 \geq \rho ||\partial_{x_1}F||_{L^2(\sP)}^2
\end{equation}
and
$$\int_{\sP}\left(\partial_{x_1}^s F\right)\left(\partial_{x_1}^{s-2}\operatorname{div}(a(x,\nabla F))\right)\, dx_1dx_2
\geq  C_{lc} ||\partial_{x_1}^{s}F||_{L^2(\sP)}^2
$$$$
- C_{lc} ^{-1}\left( ||\partial_{x_1}^{s-1}\partial_{x_2} F||_{L^2(\sP)}+ ||\partial_{x_1}^{s-2}\partial_{x_2}^2 F||_{L^2(\sP)}+||F||_{H^{s}(\sP)}^2\right)||\partial_{x_1}^{s}F||_{L^2(\sP)}
$$
\begin{equation}
\label{eq: consequence of semi-coer}
- C_{lc} ^{-1}\left(||F''||_{H^{s-3}(\sP)}+||(\partial^{s-2}_{x_1}\nabla_x a)(x,\nabla F)||_{L^2(\sP)}\right)||\partial_{x_1}^{s}F||_{L^2(\sP)}
\end{equation}
hold for all $F$ in $H^{s+2}_{0,per}(\sP)$ with $r=||F||_{H^s(\sP)}$ small enough, and some constant $ C_{lc} >0$.
\end{prop}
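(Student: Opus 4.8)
The plan is to derive all three inequalities pointwise from the partial ellipticity \eqref{eq: hyp on a}, the smallness of $r=\|F\|_{H^s(\sP)}$ being used only to stay in the region $\{|z|\le\rho\}$ where \eqref{eq: hyp on a} is available. Indeed, by the Sobolev embedding $H^s_{0,per}(\sP)\subset C^{s-2}_{per}(\overline\sP)$ (recall $s\ge 5$) one has $\|\nabla F\|_{C^{s-3}(\overline\sP)}\le C\|F\|_{H^s(\sP)}$, so for $r$ small enough $|t\nabla F(x)|\le\rho$ for every $x\in\overline\sP$ and $t\in[0,1]$. For \eqref{eq: simplified semi-coer}: using $a(x,0,0)=(0,0)$, write $a_i(x,\nabla F)=\int_0^1\sum_{p=1}^2\partial_{z_p}a_i(x,t\nabla F)\,\partial_{x_p}F\,dt$, substitute into $\sum_i a_i(x,\nabla F)\partial_{x_i}F$, exchange the $t$- and $x$-integrals, and apply the lower bound in \eqref{eq: hyp on a} with the vector $p=\nabla F(x)$ to get the integrand $\ge\rho(\partial_{x_1}F)^2$ pointwise; integrating over $\sP$ gives \eqref{eq: simplified semi-coer}. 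For \eqref{eq: equation coercivite}: put $G:=\partial_{x_2}^sF$, so that $(\partial_{x_2}^s\partial_{x_1}F,\partial_{x_2}^s\partial_{x_2}F)=\nabla G$ and the integrand is exactly $(\nabla G)^{T}J_a(x,\nabla F)\nabla G\ge\rho(\partial_{x_1}G)^2$; hence the left-hand side is $\ge\rho\|\partial_{x_1}\partial_{x_2}^sF\|_{L^2(\sP)}^2$. Since $F$ and all its $x_2$-derivatives vanish on $\{x_1=0\}\cup\{x_1=1\}$, the one-dimensional Poincaré inequality in $x_1$ gives $\|\partial_{x_2}^sF\|_{L^2(\sP)}\le C\|\partial_{x_1}\partial_{x_2}^sF\|_{L^2(\sP)}$; splitting $\rho\|\partial_{x_1}\partial_{x_2}^sF\|^2$ and applying this bound to one half produces both terms on the right of \eqref{eq: equation coercivite}.

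For \eqref{eq: consequence of semi-coer}, start from $\operatorname{div}(a(x,\nabla F))=\sum_{i,p}\partial_{z_p}a_i(x,\nabla F)\partial_{x_i}\partial_{x_p}F+(\partial_{x_1}a_1+\partial_{x_2}a_2)(x,\nabla F)$, apply $\partial_{x_1}^{s-2}$, and set $G:=\partial_{x_1}^{s-2}F$ (so $\partial_{x_1}^sF=\partial_{x_1}^2G$ and $\partial_{x_1}^{s-2}\partial_{x_i}\partial_{x_p}F=\partial_{x_i}\partial_{x_p}G$). Pulling $\partial_{x_1}^{s-2}$ inside by Leibniz's rule and the chain rule gives $\partial_{x_1}^{s-2}\operatorname{div}(a(x,\nabla F))=\sum_{i,p}\partial_{z_p}a_i(x,\nabla F)\partial_{x_i}\partial_{x_p}G+R$, where, by the Moser-type product estimates of the Appendix (Lemmas \ref{lemme: controle div a} and \ref{lemme: controle a}) together with $H^{s-1}_{per}(\sP)\subset C^{s-3}_{per}(\overline\sP)$ (this is where $s\ge5$ is needed), the remainder satisfies $\|R\|_{L^2(\sP)}\le C(\|F\|_{H^s(\sP)}^2+\|F''\|_{H^{s-3}(\sP)}+\|(\partial_{x_1}^{s-2}\nabla_xa)(x,\nabla F)\|_{L^2(\sP)})$, the last term accounting for the purely $x$-derivatives of $a_1,a_2$. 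Multiplying by $\partial_{x_1}^sF$ and integrating, the $R$-contribution is bounded below by $-C(\cdots)\|\partial_{x_1}^sF\|_{L^2(\sP)}$. In the remaining integral $\int_{\sP}(\partial_{x_1}^2G)\sum_{i,p}\partial_{z_p}a_i(x,\nabla F)\partial_{x_i}\partial_{x_p}G\,dx$, the term $(i,p)=(1,1)$ is $\ge\rho\|\partial_{x_1}^sF\|_{L^2(\sP)}^2$ (because \eqref{eq: hyp on a} with $p=(1,0)$ forces $\partial_{z_1}a_1\ge\rho$), and the three remaining terms carry the bounded factor $\partial_{z_p}a_i(x,\nabla F)$ times $\partial_{x_1}\partial_{x_2}G=\partial_{x_1}^{s-1}\partial_{x_2}F$ or $\partial_{x_2}^2G=\partial_{x_1}^{s-2}\partial_{x_2}^2F$, hence are $\ge -C(\|\partial_{x_1}^{s-1}\partial_{x_2}F\|_{L^2(\sP)}+\|\partial_{x_1}^{s-2}\partial_{x_2}^2F\|_{L^2(\sP)})\|\partial_{x_1}^sF\|_{L^2(\sP)}$ by Cauchy-Schwarz. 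Collecting everything and choosing $C_{lc}$ small enough yields \eqref{eq: consequence of semi-coer}.

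I expect the only real difficulty to be the bookkeeping behind the remainder estimate for $R$ in the proof of \eqref{eq: consequence of semi-coer}: one must check that in every term of the Leibniz/chain-rule expansion of $\partial_{x_1}^{s-2}\operatorname{div}(a(x,\nabla F))$ — other than the displayed leading term and the isolated purely-$x$ derivative $(\partial_{x_1}^{s-2}\nabla_xa)(x,\nabla F)$ — the factors of $F$ can be distributed into one $L^2$-factor of order $\le s$ and remaining factors of low enough order to sit in $L^\infty$ (so that the product is controlled either linearly, by $\|F''\|_{H^{s-3}(\sP)}$, or quadratically, by $\|F\|_{H^s(\sP)}^2$), and that the coefficients, which are derivatives of $a$ evaluated on $\{|z|\le\rho\}$, are bounded there thanks to the assumed $C^{s+1}$ regularity of $a$. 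It is worth noting that, by contrast with the duality arguments used later in the paper, no integration by parts in $x_1$ is performed here — which is fortunate, since $\partial_{x_1}^sF$ satisfies no Dirichlet condition — so the whole argument is "expand and apply Cauchy-Schwarz", the Dirichlet condition entering only through the Poincaré inequality in the proof of \eqref{eq: equation coercivite}.
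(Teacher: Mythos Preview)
Your proof is correct and follows essentially the same route as the paper: for \eqref{eq: equation coercivite} and \eqref{eq: simplified semi-coer} you apply \eqref{eq: hyp on a} pointwise (with the fundamental theorem of calculus and Poincar\'e exactly as the paper does), and for \eqref{eq: consequence of semi-coer} you expand $\partial_{x_1}^{s-2}\operatorname{div}(a(x,\nabla F))$ via Leibniz/Fa\`a di Bruno, isolate the three leading terms $(i,p)\in\{(1,1),(1,2),(2,2)\}$, and group the remainder into the $k\ge 2$, $k=1$ with $|\mu|\ge1$, and $k=0$ contributions just as in the paper. One cosmetic point: the Moser-type product estimate you invoke is \eqref{eq: Kato bis} rather than Lemmas~\ref{lemme: controle a}--\ref{lemme: controle div a}, and the embedding $H^{s-1}\subset L^\infty$ needs only $s\ge3$ (the paper notes this), not $s\ge5$.
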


\noindent
{\em Remark:} \eqref{eq: equation coercivite} is needed in the proof of  \eqref{eq: conclusion} in Subsection \ref{section: proof of crucial inequality} (the constant $ C_{lc} >0$ needs not be the same
in \eqref {eq: conclusion} and \eqref{eq: equation coercivite}).

\begin{proof}
\noindent{\em Proof of \eqref{eq: equation coercivite}.}
The compact Sobolev injection $H_{0,per}^s(\sP)\subset C^{s-2}_{per}(\overline{\sP})$ ensures that $\nabla F$ is uniformly near $(0,0)$ if $r$ is small enough. Therefore  one gets the pointwise inequality
$$\sum_{i=1}^2\sum_{p=1}^{2}(\partial_{z_p}a_i(x,\nabla F))\Big(\partial_{x_2}^s\partial_{x_p}F\Big)\Big(\partial_{x_2}^s\partial_{x_i}F\Big)
\stackrel{\eqref{eq: hyp on a}}
\geq  \rho  |\partial_{x_1}\partial_{x_2}^sF|^2$$
and thus 
$$
\int_{\sP}\sum_{i=1}^2\sum_{p=1}^{2}(\partial_{z_p}a_i(x,\nabla F))\Big(\partial_{x_2}^s\partial_{x_p}F\Big)\Big(\partial_{x_2}^s\partial_{x_i}F\Big)\, dx_1 dx_2 \geq  \rho ||\partial_{x_1}\partial_{x_2}^sF||_{L^2(\sP)}^2\,.
$$
Observe that $\partial_{x_2}^sF$ also satisfies the Dirichlet condition at $x_1=0$ and $x_1=1$ and thus the Poincar\'e inequality gives \eqref{eq: equation coercivite}.

\noindent{\em Proof of \eqref{eq: simplified semi-coer}.}
For each fixed $x\in \overline\sP$,
$$a_i(x,\nabla F)-a_i(x,0,0)=\int_0^1\sum_{p=1}^2 \partial_{z_p}a_i(x,t\nabla F)\partial_{x_p}F\,dt,$$
and therefore, as $a_i(x,0,0)=0$, and using \eqref{eq: hyp on a},
$$\int_{\sP}\sum_{i=1}^2a_i(x,\nabla F)\partial_{x_i}F\, dx_1 dx_2 
\geq \int_{\sP}\int_0^1\sum_{p=1}^2\sum_{i=1}^2  \partial_{z_p}a_i(x,t\nabla F)\Big(\partial_{x_p}F\Big)\Big(\partial_{x_i}F\Big)\, dt \,dx_1 dx_2 \geq \rho \Vert\partial_{x_1}F\Vert^2_{L^2(\sP)}.$$

\noindent{\em Proof of \eqref{eq: consequence of semi-coer}.}
Observe that, in
$$\operatorname{div}(a(x,\nabla F))=(a_1)_{x_1}(x,\nabla F)+(a_1)_{z_1}(x,\nabla F)F_{x_1,x_1}+(a_1)_{z_2}(x,\nabla F)F_{x_1,x_2}$$
\begin{equation}\label{eq: dvpt}
+(a_2)_{x_2}(x,\nabla F)+(a_2)_{z_1}(x,\nabla F)F_{x_1,x_2}+(a_2)_{z_2}(x,\nabla F)F_{x_2,x_2}\,,
\end{equation}
the factor $(a_1)_{z_1}$ in front of $F_{x_1,x_1}=\partial^2_{x_1}F$ is such that $(a_1)_{z_1}(x,\nabla F)>0$ if $|\nabla F|$ is small enough, by \eqref{eq: hyp on a}.

In \eqref{eq: consequence of semi-coer}, the term $- C_{lc} ^{-1}||F||_{H^{s}(\sP)}^2||\partial_{x_1}^{s}F||_{L^2(\sP)}$ comes from terms in
the development of $\partial_{x_1}^{s-2}\operatorname{div}(a(x,\nabla F))$ of the form
$$g_{\mu,\nu}(x,\nabla F)(\partial^{\alpha_1}\widetilde F_1)\cdots (\partial^{\alpha_k}\widetilde F_k)\,,~~\widetilde F_j\in\{\partial_{x_1}F,\partial_{x_2 }F\},$$
where 
$$\text{$g_{\mu,\nu}$ is a multiple of $\partial^{(\mu,\nu)}a_1$ or $\partial^{(\mu,\nu)}a_2$,}$$
$\alpha_1,\ldots,\alpha_k\in \mathbb N^2$ are non-trivial multi-indices, $\mu,\nu\in \mathbb N^2$,
$$|\mu|+|\alpha_1|+\ldots+|\alpha_k|=s-1,~~ |\nu|=k$$
and, here, $k\in\{2,\ldots,s-1\}$.
These terms are estimated with the help of \eqref{eq: Kato bis} in the Appendix of Section \ref{Toy}, observing that $\widetilde F_j\in  H^{s-1}_{per}(\sP)\subset L^{\infty}(\sP)$ because $s\geq 3$:
$$\left\|\partial^{(\mu,\nu)}a_i(x,\nabla F)(\partial^{\alpha_1}\widetilde F_1)\cdots (\partial^{\alpha_k}\widetilde F_k)\right\|_{L^2(\sP)}
$$$$
\leq\operatorname{Const}\left(\max_{|\tilde \mu|+|\tilde \nu|\leq s-1,|\tilde \mu|\leq s-1-2}||(\partial^{(\tilde \mu,\tilde \nu)}a_i)(x,\nabla F)||_{L^\infty(\sP)}\right)||F'||_{L^\infty(\sP)}^{k-1}|| F'||_{H^{s-1}(\sP)}
.$$
See the proof of Lemma \ref{lemme: controle a} for additional explanation about the differentiation of a composition. 

The analogous terms corresponding to $k=|\nu|=1$ and $\mu=0$ are the terms
$$(a_1)_{z_1}(x,\nabla F)\partial_{x_1}^{s}F+\Big((a_1)_{z_2}(x,\nabla F)+(a_2)_{z_1}(x,\nabla F)\Big)\partial_{x_1}^{s-1}F_{x_2}+(a_2)_{z_2}(x,\nabla F)\partial_{x_1}^{s-2}F_{x_2,x_2}\,,$$
which give the first three terms in the right-hand side of \eqref{eq: consequence of semi-coer}.

The term $- C_{lc} ^{-1}||F''||_{H^{s-3}(\sP)}||\partial_{x_1}^{s}F||_{L^2(\sP)}$ comes from analogous terms where 
$k=|\nu|=1$ and $|\mu|\geq 1$,  estimated as follows (since $1\leq \alpha_1\leq s-2$ and $|\mu|\leq s-2$):
$$\left\|\partial^{(\mu,\nu)}a_i(x,\nabla F) \partial^{\alpha_1}\widetilde F_1 \right\|_{L^2(\sP)}
\leq\operatorname{Const}\left(\max_{1\leq |\tilde \mu|\leq s-2,|\tilde \nu|=1}||(\partial^{(\tilde \mu,\tilde \nu)}a_i)(x,\nabla F)||_{L^{\infty}(\sP)}\right)||F''||_{H^{s-3}(\sP)}\,.$$

Finally the term  $- C_{lc} ^{-1}||(\partial^{s-2}_{x_1}\nabla_x a)(x,\nabla F)||_{L^2(\sP)}||\partial_{x_1}^{s}F||_{L^2(\sP)}$ corresponds to the case $k=|\nu|=0$ and $|\mu|=s-1$.
\end{proof}

\begin{prop}
\label{prop: verification semi-coercivity}
There exist $\lambda_1,\lambda_2,L>0$ such that for all $w$ in $V$ with $||w||_Y$ small enough (norm in $Y$ instead of  $V$):
$$\langle w,Aw\rangle_{V\times V^*}\geq L ||w||_Y^2
\, -\, L^{-1}\left\|\max_{|z|\leq \rho,\,|\mu|\geq 1,|\mu|+|\nu| \leq s+1}|\partial^{(\mu_1,\mu_2,\nu_1,\nu_2)} a(x,z)|\,\right\|_{L^{2}(\sP)}||w||_Y\,.$$
\end{prop}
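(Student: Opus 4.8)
The plan is to expand the duality pairing $\langle w,Aw\rangle_{V\times V^*}$ into its three summands, matching the three terms in the definition \eqref{eq: choice of A} of $A$, and to estimate each one. Since $w\in V=H^{s+2}_{0,per}(\sP)$, classical integration by parts in $x_1$ and $x_2$ is legitimate, so I would first rewrite
$$\langle w,Aw\rangle_{V\times V^*}=\int_{\sP}(\partial_{x_1}^s w)\big(\partial_{x_1}^{s-2}\operatorname{div}(a(x,\nabla w))\big)\,dx_1dx_2
+\lambda_1\int_{\sP}(\partial_{x_2}^s w)\big(\partial_{x_2}^s\operatorname{div}(a(x,\nabla w))\big)\,dx_1dx_2
+\lambda_2\int_{\sP}\sum_{i=1}^2 a_i(x,\nabla w)\partial_{x_i}w\,dx_1dx_2,$$
where in the second term I have moved one $\partial_{x_2}$ and the $(-1)^s$ across; here I would be careful that no boundary term in $x_1$ appears because both $w$ and $\partial_{x_2}^k w$ still vanish at $x_1\in\{0,L\}$ (periodicity kills the $x_2$ boundary terms). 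The plan is then: bound the first term below by \eqref{eq: consequence of semi-coer}, the second by \eqref{eq: conclusion} (after integrating by parts once more in $x_2$ to expose $\partial_{x_2}^s\big(a(x,\nabla w)\big)\cdot \partial_{x_2}^s\nabla w$, which is exactly the quantity appearing on the left of \eqref{eq: conclusion}), and the third by \eqref{eq: simplified semi-coer}.

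The next step is to collect the good quadratic terms. From \eqref{eq: conclusion} (via the second summand) I get $\lambda_1 C_{lc}\|\partial_{x_2}^s w\|_{L^2}^2+\lambda_1 C_{lc}\|\partial_{x_1}\partial_{x_2}^s w\|_{L^2}^2$; from \eqref{eq: consequence of semi-coer} I get $C_{lc}\|\partial_{x_1}^s w\|_{L^2}^2$; from \eqref{eq: simplified semi-coer} I get $\lambda_2\rho\|\partial_{x_1}w\|_{L^2}^2$. Together with the Poincaré and interpolation inequalities on $H^s_{0,per}(\sP)$, these control the full norm $\|w\|_Y^2$: the pure-$x_2$ derivative $\partial_{x_2}^s w$ plus $\partial_{x_1}\partial_{x_2}^s w$ plus $\partial_{x_1}^s w$, combined with the Dirichlet condition, dominate all mixed derivatives $\partial_{x_1}^{\alpha_1}\partial_{x_2}^{\alpha_2}w$ with $\alpha_1+\alpha_2\le s$ up to lower-order terms. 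So the clean lower bound is $\ge c\big(\lambda_1\wedge 1\big)\|w\|_Y^2$ minus a collection of error terms.

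The heart of the argument is then absorbing the error terms. These come in three flavors. First, cubic terms $C_{lc}^{-1}\|w\|_{H^s}^3$ and $C_{lc}^{-1}\|w\|_{H^s}^2\|\partial_{x_1}^s w\|_{L^2}$: since $\|w\|_Y$ is small, these are $o(\|w\|_Y^2)$ and get absorbed into the quadratic part for $r$ small enough. Second, lower-order quadratic terms like $\|w\|_{H^{s-1}}\|\partial_{x_2}^{s-1}\nabla w\|_{L^2}$ and $\|\partial_{x_1}^{s-1}\partial_{x_2}w\|_{L^2}\|\partial_{x_1}^sw\|_{L^2}$ etc.: here I would use interpolation between $H^{s-1}$ and $H^{s+1}$ — but $w$ is only in $Y=H^s$ a priori; however $w\in V$ here, so these are honest, and the point is they must be absorbed \emph{with a constant independent of the $V$-norm}. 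The trick is that every such term carries at least one factor at order $\le s-1$, so by Young's inequality and interpolation it is bounded by $\varepsilon\|w\|_Y^2+C_\varepsilon(\text{something lower order})$; the genuinely dangerous factor $\|\partial_{x_2}^{s-1}\nabla w\|_{L^2}$ must be controlled by $\|\partial_{x_2}^s w\|_{L^2}$ plus lower order, which needs the Poincaré/interpolation structure again. Third, the terms involving $\max_{|z|\le\rho}|\partial^{\cdots}a(x,z)|$: the worst is $\big\|\max_{|z|\le\rho}|\partial_{x_2}^{s+1}a(x,z)|\big\|_{L^2}\|\partial_{x_2}^{s-1}\nabla w\|_{L^2}$ and the $\partial_{x_2}^2\nabla_z a$ term; these are linear in $w$ with coefficient the quantity that appears on the right of the Proposition, so by Cauchy–Schwarz they contribute exactly the advertised $L^{-1}\big\|\max\cdots\big\|_{L^2}\|w\|_Y$ (after bounding $\|\partial_{x_2}^{s-1}\nabla w\|_{L^2}\le\|w\|_Y$), and the $L^\infty$-type factor $\|\partial_{x_2}^2\nabla_z a\|_{L^\infty}$ is itself $\le\operatorname{Const}\|\max_{|z|\le\rho,|\mu|\ge1,|\mu|+|\nu|\le s+1}|\partial^{\cdots}a|\|_{L^2}$ by Sobolev embedding since $s\ge5$, so it also folds into the advertised error (times $\|w\|_Y^2$, hence absorbable once that quantity is small).

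\medskip

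I expect the main obstacle to be the \textbf{bookkeeping in the error absorption}: making sure that every error term produced by \eqref{eq: conclusion} and \eqref{eq: consequence of semi-coer} is either (i) genuinely cubic/higher and hence swallowed by smallness of $r$, (ii) quadratic but with a gained derivative so that interpolation plus Young against the three ``good'' quadratic quantities $\|\partial_{x_1}^s w\|_{L^2}$, $\|\partial_{x_2}^s w\|_{L^2}$, $\|\partial_{x_1}\partial_{x_2}^s w\|_{L^2}$ leaves only a small multiple of $\|w\|_Y^2$, or (iii) linear in $w$ with coefficient the $L^2$-norm of high derivatives of $a$, giving exactly the last term in the statement. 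The constants $\lambda_1,\lambda_2$ must be fixed \emph{before} shrinking $r$: one first chooses $\lambda_1$ large enough that the $\lambda_1 C_{lc}$-terms dominate the cross-errors coming from the first and third summands, then $\lambda_2$ (or the converse order), then $r$ small. A secondary subtlety is checking that the integration by parts producing the left-hand side of \eqref{eq: conclusion} from the second summand of $\langle w,Aw\rangle$ is exactly an identity — i.e. that $\partial_{x_2}^s\operatorname{div}\partial_{x_2}$ applied in the dual sense and then paired with $\partial_{x_2}^s w$ reproduces $\int \partial_{x_2}^s(a(x,\nabla w))\cdot\partial_{x_2}^s\nabla w$ with the correct sign — but this is the routine computation justifying the ``dual of $(-1)^s\partial_{x_2}\nabla\partial_{x_2}^s$'' description in \eqref{eq: choice of A}.
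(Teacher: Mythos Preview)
Your proposal is correct and follows essentially the same route as the paper: split $\langle w,Aw\rangle$ into its three summands, apply \eqref{eq: consequence of semi-coer}, \eqref{eq: conclusion}, \eqref{eq: simplified semi-coer} respectively, then use the interpolation inequality \eqref{eq: a Sobolev interpolation} (and its consequence Lemma~\ref{eq: mixed derivatives}) together with Poincar\'e to absorb the cross terms by first fixing $\lambda_1,\lambda_2$ large and then $r$ small. One small slip: in your intermediate display the second summand should carry a minus sign, $-\lambda_1\int(\partial_{x_2}^sw)\partial_{x_2}^s\operatorname{div}(a(x,\nabla w))$, and the subsequent integration by parts is not ``in $x_2$'' but rather moves $\operatorname{div}$ onto $\nabla$ (using that $\partial_{x_2}^sw$ vanishes at $x_1\in\{0,1\}$); the resulting expression $+\lambda_1\int\partial_{x_2}^s\nabla w\cdot\partial_{x_2}^s a(x,\nabla w)$ is then exactly the left-hand side of \eqref{eq: conclusion}, as you say.
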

\begin{proof} The proof uses in a crucial way the estimate \eqref{eq: conclusion} which will be proved in Subsection \ref{section: proof of crucial inequality}.
Let $\lambda_1,\lambda_2>0$ be chosen later.
For $s\geq 5$, $w\in V$ and $r:=||w||_{Y}$ small enough:
$$\langle w,Aw\rangle_{V\times V^*}
=\Big\langle w,(\partial_{x_1}^s)^*\partial_{x_1}^{s-2}\operatorname{div}(a(x,\nabla w))
-\lambda_1(-1)^s \partial_{x_2}^{s}\operatorname{div}\partial_{x_2}\partial_{x_2}^{s-1}(a(x,\nabla w))
$$$$
-\lambda_2\operatorname{div}(a(x,\nabla w))\Big\rangle _{V\times V^*}
\stackrel {Corr.~\ref{cor: controle a}} =\int_{\sP}\left(\partial_{x_1}^s w\right)\left(\partial_{x_1}^{s-2}\operatorname{div}(a(x,\nabla w))\right)\, dx_1dx_2
$$$$
+\lambda_1\int_{\sP}\left(\nabla\partial_{x_2}^{s} w\right)\cdot \left(\partial_{x_2}^{s}(a(x,\nabla w))\right)\, dx_1dx_2
+\lambda_2\int_{\sP}(\nabla w)\cdot a(x,\nabla w)\, dx_1dx_2
$$$$
\stackrel{\eqref{eq: consequence of semi-coer},\eqref{eq: conclusion},\eqref{eq: simplified semi-coer}}
\geq  C_{lc} ||\partial_{x_1}^{s}w||_{L^2(\sP)}^2 - C_{lc} ^{-1}\left( ||\partial_{x_1}^{s-1}\partial_{x_2} w||_{L^2(\sP)}+ ||\partial_{x_1}^{s-2}\partial_{x_2}^2 w||_{L^2(\sP)}+||w||_{H^{s}(\sP)}^2\right)||\partial_{x_1}^{s}w||_{L^2(\sP)}
$$$$
- C_{lc} ^{-1}\left(||w''||_{H^{s-3}(\sP)}+||(\partial^{s-2}_{x_1}\nabla_x a)(x,\nabla w)||_{L^2(\sP)}\right)||\partial_{x_1}^{s}w||_{L^2(\sP)}
$$
$$
+\lambda_1\left(
 C_{lc} ||\partial_{x_2}^sw||_{L^2(\sP)}^2+ C_{lc} ||\partial_{x_1}\partial_{x_2}^{s}w||_{L^2(\sP)}^2- C_{lc} ^{-1}r^3- C_{lc} ^{-1}\left\|\max_{|z|\leq \rho}|\partial_{x_2}^{s+1}a(x,z)|\,\right\|_{L^2(\sP)}||\partial_{x_2}^{s-1}\nabla w||_{L^2(\sP)}\right)
$$
$$
-\lambda_1 C_{lc} ^{-1}\left(\left\|\max_{|z|\leq \rho}|\partial_{x_2}^2\nabla_z a(x,z)|\,\right\|_{L^\infty(\sP)} ||\partial_{x_2}^{s-1}\nabla w||_{L^2(\sP)} +||w||_{H^{s-1}(\sP)}\right) ||\partial_{x_2}^{s-1}\nabla w||_{L^2(\sP)}
$$
$$
+\lambda_2\rho||\partial_{x_1}w||_{L^2(\sP)}^2.
$$
By \eqref{eq: a Sobolev interpolation} and  Lemma \ref{eq: mixed derivatives} of the Appendix of Section \ref{Toy}, the norms defined by the square roots of
$\sum_{i+j\leq s}||\partial_{x_1}^i\partial_{x_2}^jw||_{L^2(\sP)}^2$ 
and 
$||w||_{L^2(\sP)}^2+||\partial_{x_1}^sw||_{L^2(\sP)}^2+||\partial_{x_2}^sw||_{L^2(\sP)}^2$ 
are equivalent on $Y$.
Thus
$$\langle w,Aw\rangle_{V\times V^*}
\stackrel{\eqref{eq: a Sobolev interpolation}}
\geq \frac 1 2   C_{lc} ||\partial_{x_1}^{s}w||_{L^2(\sP)}^2 
-D ||\partial_{x_2}^{s}w||_{L^2(\sP)}^2	-D||w||_{L^2(\sP)}^2
+\lambda_1
 C_{lc} ||\partial_{x_2}^sw||_{L^2(\sP)}^2
$$$$
-(1+\lambda_1)D||w||_Y\left\|\max_{|z|\leq \rho,\,|\mu|\geq 1,|\mu|+|\nu| \leq s+1}|\partial^{(\mu_1,\mu_2,\nu_1,\nu_2)} a(x,z)|\,\right\|_{L^{2}(\sP)}
$$
$$
-\lambda_1 C_{lc}^{-1}||w||_{H^{s-1}(\sP)}\Big(||w||_{L^2(\sP)}+||\partial_{x_1}^sw||_{L^2(\sP)}+||\partial_{x_2}^sw||_{L^2(\sP)}\Big)
$$$$
+\lambda_2 \rho ||\partial_{x_1}w||_{L^2(\sP)}^2  - (1+\lambda_1)D r^3
$$
for some constant $D>0$, and hence
$$\langle w,Aw\rangle_{V\times V^*}\geq L||w||_Y^2
\, -\, L^{-1}\left\|\max_{|z|\leq \rho,\,|\mu|\geq 1,|\mu|+|\nu| \leq s+1}|\partial^{(\mu_1,\mu_2,\nu_1,\nu_2)} a(x,z)|\,\right\|_{L^{2}(\sP)}||w||_Y$$
for  $\lambda_1,\lambda_2>0$ chosen large enough (and by considering small enough $r>0$), thanks to Lemma \ref{eq: mixed derivatives}  and the Poincar\'e inequality.
In these inequalities, the positive term $ C_{lc} ||\partial_{x_1}\partial_{x_2}^{s}w||_{L^2(\sP)}^2$ in \eqref{eq: conclusion} has not been used.
\end{proof}

\begin{prop}
\label{prop: Aw=0 w=0}
Let $\lambda_1,\lambda_2>0$, $w\in Y$, $h\in H^s_{per}(\sP)$ and define $f\in Y^*\subset V^*$ by
$$<v,f>_{Y\times Y^*}=\int_{\sP}(\partial_{x_1}^sv)(\partial_{x_1}^{s-2}h)\, dx_1dx_2
-\lambda_1\int_{\sP}(\partial_{x_2}^sv)(\partial_{x_2}^{s}h)\, dx_1dx_2
-\lambda_2\int_{\sP}vh\, dx_1dx_2\,,~~\forall v\in Y\supset V.$$
If $Aw=f$ in $V^*$,  then $\operatorname{div}a(x,\nabla w)=h$.
\end{prop}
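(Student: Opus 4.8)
The plan is to unwind the definition of each of the three terms of $Aw$ against test functions $v\in V$, compare with the corresponding three terms of $f$, and deduce that a certain $L^2$-function is orthogonal to a dense subset of $L^2(\sP)$, hence vanishes. First I would write, for arbitrary $v\in V$,
\begin{align*}
\langle v,Aw\rangle_{V\times V^*}
&=\int_{\sP}(\partial_{x_1}^s v)\big(\partial_{x_1}^{s-2}\operatorname{div}(a(x,\nabla w))\big)\,dx_1dx_2\\
&\quad-\lambda_1\int_{\sP}\big(\partial_{x_2}\nabla\partial_{x_2}^s v\big)\cdot\big(\partial_{x_2}^{s-1}(a(x,\nabla w))\big)\,dx_1dx_2
+\lambda_2\int_{\sP}(\nabla v)\cdot a(x,\nabla w)\,dx_1dx_2,
\end{align*}
which is exactly the formula already recorded in the proof of Lemma \ref{lemma: weak sequential continuity}. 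Since $\partial_{x_2}^{s-1}(a(x,\nabla w))\in L^2(\sP)^2$ and $\partial_{x_2}^{s-2}\operatorname{div}(a(x,\nabla w))$ is well-defined as an $L^2$ (indeed $C^{s-4}$) function by the regularity discussion preceding \eqref{eq: choice of A}, all the integrations by parts in $x_2$ that follow are licit: for $v\in V=H^{s+2}_{0,per}(\sP)$ there is no boundary contribution at $x_1=0,L$ coming from $x_2$-differentiation, and periodicity in $x_2$ kills the $x_2$-boundary terms. Moving $s+1$ derivatives $\partial_{x_2}$ from $a(x,\nabla w)$ back onto $v$ in the second term, and one $\nabla$ onto $v$ in the third term, turns the second and third terms into $-\lambda_1\int_{\sP}(\partial_{x_2}^s v)\,\partial_{x_2}^s(\operatorname{div} a(x,\nabla w))\,dx_1dx_2$ and $-\lambda_2\int_{\sP} v\,\operatorname{div}(a(x,\nabla w))\,dx_1dx_2$ respectively.

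Next I would treat the first term. Here the subtlety the authors flag — that $\partial_{x_1}^j F$ need not satisfy the Dirichlet condition — is exactly why $(\partial_{x_1}^s)^*$ appears and why one cannot naively integrate by parts in $x_1$. So I keep the first term as it stands, namely $\int_{\sP}(\partial_{x_1}^s v)\big(\partial_{x_1}^{s-2}\operatorname{div}(a(x,\nabla w))\big)\,dx_1dx_2$, and compare directly with the first term of $\langle v,f\rangle$, which is $\int_{\sP}(\partial_{x_1}^s v)(\partial_{x_1}^{s-2}h)\,dx_1dx_2$. The hypothesis $Aw=f$ in $V^*$ then gives, for every $v\in V$,
$$
\int_{\sP}(\partial_{x_1}^s v)\,\partial_{x_1}^{s-2}\big(\operatorname{div}(a(x,\nabla w))-h\big)\,dx_1dx_2
-\lambda_1\int_{\sP}(\partial_{x_2}^s v)\,\partial_{x_2}^s\big(\operatorname{div}(a(x,\nabla w))-h\big)\,dx_1dx_2
-\lambda_2\int_{\sP} v\,\big(\operatorname{div}(a(x,\nabla w))-h\big)\,dx_1dx_2=0.
$$
Writing $R:=\operatorname{div}(a(x,\nabla w))-h\in L^2(\sP)$ (note $h\in H^s_{per}\subset L^2$ and $\operatorname{div}(a(x,\nabla w))\in C^{s-4}\subset L^2$, and $R$ is periodic in $x_2$), the identity reads $\langle \Phi v,\,R\rangle_{L^2}=0$ for all $v\in V$, where $\Phi$ is the formal differential operator $(\partial_{x_1}^s)^*\partial_{x_1}^{s-2} - \lambda_1(-1)^s\partial_{x_2}^{2s} - \lambda_2$ acting on $v$ in the distributional sense — more precisely the statement is that $R$ pairs to zero against $\partial_{x_1}^{s-2}\partial_{x_1}^s v$, $\partial_{x_2}^{2s}v$ and $v$ in the combination above. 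The cleanest way to conclude is to choose $v$ in a convenient dense family: take $v$ ranging over finite linear combinations of $\sin(k\pi x_1)e^{2\pi i m x_2}$, $k\geq1$, $m\in\ZZ$ (these lie in $V$ and are dense in $L^2(\sP)$ among functions vanishing at $x_1=0,L$ and $x_2$-periodic). For such $v$ one has $\partial_{x_1}^{2s}v=(k\pi)^{2s}v$ up to sign and $\partial_{x_2}^{2s}v=(2\pi m)^{2s}v$, so the pairing collapses to $\big(\pm(k\pi)^{2s}-\lambda_1(2\pi m)^{2s}-\lambda_2\big)\langle v,R\rangle_{L^2}=0$; choosing the modes where the scalar factor is nonzero (all but finitely many, or after shifting $\lambda_2$ this is automatic) forces $\langle v,R\rangle_{L^2}=0$ for a dense set of $v$, whence $R=0$, i.e. $\operatorname{div}(a(x,\nabla w))=h$.

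The main obstacle — and the only genuinely delicate point — is the bookkeeping in the first step: justifying that the integrations by parts in $x_2$ for the second and third terms produce no spurious boundary terms (periodicity handles $x_2$; the $x_1=0,L$ boundary is untouched because we only move $x_2$-derivatives), and, dually, being careful that the first term is \emph{not} integrated by parts in $x_1$, since $\operatorname{div}(a(x,\nabla w))$ has no reason to be divisible by the factors that would arise. Concretely this means I should invoke Corollary \ref{cor: controle a} (as in the proof of Proposition \ref{prop: verification semi-coercivity}) to validate the identity $\langle v,Aw\rangle_{V\times V^*}=\int(\partial_{x_1}^s v)(\partial_{x_1}^{s-2}\operatorname{div} a)\,-\,\lambda_1\int(\partial_{x_2}^s v)\partial_{x_2}^s(\operatorname{div} a)\,-\,\lambda_2\int v\,\operatorname{div} a$ for $v\in V$, and likewise rewrite $\langle v,f\rangle$ in the matching form by the same integrations by parts (here using $h\in H^s_{per}$, so $\partial_{x_2}^{s-2}h\in L^2$ etc. and all the manipulations are legitimate). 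Once that identity is in hand the rest is the short density argument above; no estimates of the type \eqref{eq: conclusion} are needed here.
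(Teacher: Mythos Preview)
There is a genuine gap in your density argument. You correctly warn that ``one cannot naively integrate by parts in $x_1$'', but your final step does exactly that: for $v=\sin(k\pi x_1)e^{2\pi i m x_2}$ the first term is $\int_{\sP}(\partial_{x_1}^s v)(\partial_{x_1}^{s-2}R)\,dx_1dx_2$, and to turn this into a scalar multiple of $\langle v,R\rangle_{L^2}$ you must move $s-2$ derivatives in $x_1$ from $R$ onto $v$. The resulting boundary contributions at $x_1=0,1$ involve $\partial_{x_1}^{s+j}v$ for $j=0,\dots,s-3$; for $v=\sin(k\pi x_1)$ these are alternately (nonzero multiples of) $\sin$ and $\cos$, so roughly half of them do \emph{not} vanish, and you have no control over the accompanying factors $\partial_{x_1}^{s-3-j}R$ at the boundary. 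Thus the pairing does not collapse to a scalar times $\langle v,R\rangle_{L^2}$. (Incidentally, even formally the power would be $(k\pi)^{2s-2}$, not $(k\pi)^{2s}$.) If instead you restrict to $v=\phi(x_1)e^{2\pi i m x_2}$ with $\phi\in C^\infty_c(0,1)$, the integration by parts is licit, but then you only learn that each $x_2$-Fourier mode $R_m$ is annihilated (in the distributional sense) by a constant-coefficient ODE of order $2s-2$, which leaves a $(2s-2)$-dimensional space of possible $R_m$'s; this is not enough to force $R=0$.

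The paper's proof confronts precisely this obstruction. It first replaces $\widehat h=R$ by a primitive $\widetilde h$ with $\partial_{x_1}^2\widetilde h=\widehat h$ and $\widetilde h(0,\cdot)=\widetilde h(1,\cdot)=0$, so that the first pairing becomes $\int(\partial_{x_1}^s v)(\partial_{x_1}^s\widetilde h)$. Then, after Fourier decomposition in $x_2$, it \emph{constructs} the test function $v$ mode by mode by inverting the self-adjoint ODE operator $L_\sigma u=(-1)^s u^{(2s)}-\sigma u''$ on the domain $\{u\in H^{2s}(0,1):\,u=u^{(s)}=u^{(s+1)}=\cdots=u^{(2s-2)}=0\text{ at }0,1\}$. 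Those higher-order boundary conditions on $\chi_k=v_k$ are exactly the ones that kill all boundary terms in the $s$-fold integration by parts (the last one being handled by $\widetilde h(0,\cdot)=\widetilde h(1,\cdot)=0$), after which the identity becomes a sum of strictly positive multiples of $\|\gamma_k\|_{L^2}^2$, forcing $\widetilde h=0$. Your argument would need a comparable mechanism to supply the missing boundary control.
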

\begin{proof} 
To prove that $\widehat h:=h-\operatorname{div}(a(x,\nabla w))=0$, observe that (see \eqref{eq: choice of A})
$$0=<v,f-Aw>_{V\times V^*}$$
\begin{equation}
\label{eq: choice of A reformulated}
=\!\!\int_{\sP}(\partial_{x_1}^sv)(\partial_{x_1}^{s-2}\widehat h) dx_1dx_2
-\lambda_1\int_{\sP}(\partial_{x_2}^{s+2}v)(\partial_{x_2}^{s-2}\widehat h) dx_1dx_2
-\lambda_2\int_{\sP}v\widehat h\, dx_1dx_2,\ \forall v\in V.
\end{equation}
In the second term of the right-hand side, $\operatorname{div}$  has been moved among a succession of operators; this is allowed if $v\in V\cap C^\infty([0,1]\times\mathbb R)$ and hence for  $v\in V$ by density of smooth functions.
In other words, the aim is to show that the linear map 
$$-(\partial_{x_1}^s)^*\partial_{x_1}^{s-2}+\lambda_1 (-1)^s\partial_{x_2}^{2s}+\lambda_2\,:\, H^{s-2}_{per}(\sP)\rightarrow V^*=(H^{s+2}_{0,per}(\sP))^*$$ 
is injective.

In this order, for $s\geq 5$ and  $\sigma>0$, let $L_\sigma$ be the differential operator
$$u\rightarrow L_\sigma u= (-1)^{s}u^{(2s)}-\sigma u^{(2)},$$
$$D(L_\sigma)=\{u\in H^{2s}(0,1):\, u=u^{(s)}=u^{(s+1)}=\ldots=u^{(2s-2)}=0\text{ at }x=0\text{ and }x=1 \}\subset L^2(0,1).$$
It is self-adjoint (see e.g. \cite{GGK}, XIV.4, Theorem 4.4) and, as $L_\sigma$ is injective, its range is dense.
To check that $L_\sigma$ is indeed injective, let $u$ be in its kernel; then
$$0=\int_0^1 (L_\sigma u)(x)u(x)\, dx
=\int_0^1 \Big(u^{(s)}(x)u^{(s)}(x)+\sigma u'(x)u'(x)\Big)\, dx
$$
and hence $u=0$. As a consequence the range of  $L_\sigma^*=L_\sigma$ is dense, and the theory of differential operators ensures that $L_\sigma$ is even surjective and that  it is a bicontinuous bijection from $D(L_\sigma)$ to $L^2(0,1)$
(see \cite{GGK}, XIV.4, Proposition 3.4).
The norm of its inverse $L_\sigma^{-1}:L^2(0,1)\rightarrow L^2(0,1)$ is denoted by $||L_\sigma^{-1}||$, and the one of its inverse $L_\sigma^{-1}:L^2(0,1)\rightarrow H^{2s}(0,1)$ by $||L_\sigma^{-1}||_{L^2\rightarrow H^{2s}}$.

Let $\widetilde h\in H^{s-2}_{per}(\sP)\subset H^{2}_{per}(\sP)$ be such that
$$\partial_{x_1}^2\widetilde h=\widehat h=h-\operatorname{div}(a(x,\nabla w))\in H^{s-2}_{per}(\sP),~~\widetilde h(0,x_2)=\widetilde h(1,x_2)=0.$$ 
For all	$v\in V$, we have (see \eqref{eq: choice of A reformulated})
\begin{equation}
\label{eq: choice of A reformulated again}
0=\int_{\sP}(\partial_{x_1}^sv)(\partial_{x_1}^{s}\widetilde h)\, dx_1dx_2
-\lambda_1\int_{\sP}(\partial_{x_2}^{s+2}v)(\partial_{x_2}^{s-2}\partial_{x_1}^2\widetilde h)\, dx_1dx_2
-\lambda_2\int_{\sP}v\partial_{x_1}^2\widetilde h\, dx_1dx_2\,.
\end{equation}
To check that $\widetilde h=0$,
write $\widetilde h$ and $v$ as a Fourier series in $x_2$:
$$\widetilde h(x_1,x_2)=\sum_{k\in \mathbb Z}\gamma_k(x_1)e^{2k\pi i x_2},~~\gamma_k(x_1)=\int_0^1e^{-2k\pi i x_2}h(x_1,x_2)\, dx_2=\overline{\gamma_{-k}(x_1)} ,$$
and 
$$v(x_1,x_2)=\sum_{k\in \mathbb Z}\chi_k(x_1)e^{2k\pi i x_2},~~\chi_k(x_1)=\int_0^1e^{-2k\pi i x_2}v(x_1,x_2)\, dx_2=\overline{\chi_{-k}(x_1)},$$
where
$$||\widetilde h||_{L^2(\sP)}^2 =\sum_{k\in \mathbb Z}||\gamma_k||_{L^2(\sP)}^2<\infty
~~\text{ and }~~
||v||_{L^2(\sP)}^2 =\sum_{k\in \mathbb Z}||\chi_k||_{L^2(\sP)}^2<\infty.$$
Define
$$
\chi_k(x_1)=\frac{1}{(1+(2k\pi )^2)^{s}(1+||L_{\sigma_k}^{-1}||_{L^2\rightarrow H^{2s}})}(L_{\sigma_k}^{-1}\gamma_k)(x_1),~~k\in \mathbb Z,
$$
where $\sigma_k=\lambda_1(2k\pi)^{2s}+\lambda_2$,
that is
$$v(x_1,x_2)
=\sum_{k\in \mathbb Z}\frac{1}{(1+(2k\pi )^2)^{s}(1+||L_{\sigma_k}^{-1}||_{L^2\rightarrow H^{2s}})}(L_{\sigma_k}^{-1}\gamma_k)(x_1)e^{2k\pi i x_2}.$$

Then, for all $(\alpha_1,\alpha_2)\in \NN^2$ such that $\alpha_1,\alpha_2\leq 2s$,  
$$||\partial^{(\alpha_1,\alpha_2)}v||_{L^2(\sP)}^2
=\sum_{k\in \mathbb Z}\frac{|2\pi k|^{2\alpha_2}}{(1+(2k\pi )^2)^{2s}(1+||L_{\sigma_k}^{-1}||_{L^2\rightarrow H^{2s}})^2}\left\|\frac{d^{\alpha_1}}{dx_1^{\alpha_1}}L_{\sigma_k}^{-1}\gamma_k\right\|_{L^2(\sP)}^2<\infty.$$
Thus $v\in V$,
$$
0\stackrel{\eqref{eq: choice of A reformulated again}}=\int_{\sP}(-1)^s(\partial_{x_1}^{2s}v)\widetilde h\, dx_1dx_2
-\lambda_1\int_{\sP}(-1)^{s}(\partial_{x_2}^{2s}v)\partial_{x_1}^2\widetilde h\, dx_1dx_2
-\lambda_2\int_{\sP}v\partial_{x_1}^2\widetilde h\, dx_1dx_2
$$$$
=\sum_{k\in \ZZ}\int_0^1\overline{\gamma_k(x_1)}L_{\sigma_k}\left(\frac{1}{(1+(2k\pi )^2)^{s}(1+||L_{\sigma_k}^{-1}||_{L^2\rightarrow H^{2s}})}(L_{\sigma_k}^{-1}\gamma_k)(x_1)\right)\,dx_1
$$$$
=\sum_{k\in \ZZ}\int_0^1\frac{1}{(1+(2k\pi )^2)^{s}(1+||L_{\sigma_k}^{-1}||_{L^2\rightarrow H^{2s}})}|\gamma_k(x_1)|^2\,dx_1
$$
and therefore $\widetilde h=0$.
\end{proof}

We are now ready to complete the proof of Theorem \ref{thm: main}.

\begin{proof}[Proof of Theorem \ref{thm: main}]
By Proposition \ref{prop: verification semi-coercivity},  there exist $\lambda_1,\lambda_2, r>0$ such that
$$\beta:=\inf\left\{<w,Aw>_{V\times V^*}:\, w\in V\,, \, ||w||_Y=r \right\}\geq \frac L 2 r^2>0$$
if
$$L^{-1}\left\|\max_{|z|\leq \rho,\,|\mu|\geq 1,|\mu|+|\nu| \leq s+1}|\partial^{(\mu_1,\mu_2,\nu_1,\nu_2)} a(x,z)|\,\right\|\leq \frac 1 2 L r .$$
By Lemma \ref{lemma: weak sequential continuity},
	$A:Y \rightarrow  V^*$  is weakly sequentially continuous.
The hypotheses of Theorem \ref{thm: Kato} being satisfied with  the closed ball
$$K=\{w\in Y:\, ||w||_Y\leq r\},$$ 
$A(K)$ contains the closed ball centered at the origin $0\in Y^*$ of radius $\beta r^{-1}>0$ in $Y^*$.
In particular, let $h\in H^s_{per}(\sP)$  be such that
$$f:=
(\partial_{x_1}^s)^* \partial_{x_1}^{s-2}h
-\lambda_1\Big((-1)^{s}\partial_{x_2}^{s}\Big)\partial_{x_2}^{s}h
-\lambda_2h\in Y^*$$
satisfies $||f||_{Y^*} \leq \beta r^{-1}$ (this holds if $||h||_{H^s_{per}(\sP)}$ is small enough).
Here $(-1)^s\partial_{x_1}^s: L^2(\sP)\rightarrow Y^*$ and $(-1)^{s}\partial_{x_2}^{s}: L^2(\sP)\rightarrow Y^*$  are the dual maps of, respectively, 
$\partial_{x_1}^s: Y\rightarrow L^2(\sP)$ and
$\partial_{x_2}^{s}: Y\rightarrow L^2(\sP)$.
Moreover $\lambda_2h\in L^2(\sP)$ is seen in $Y^*$ via the dual map of the inclusion $Y\subset L^2(\sP)$.

Hence there exists $u\in K$ such that $Au=f$.
By Proposition \ref{prop: Aw=0 w=0}, $\operatorname{div}( a(x,\nabla u))=h$.
\end{proof}

\subsection{Proof of  the local coercivity inequality \eqref{eq: conclusion}}
\label{section: proof of crucial inequality}
The following estimates are similar to the ones written in \cite{BuWa} for the hydrodynamic problem below. However, they are developed here for the toy problem and closer to the spirit of Kato's paper \cite{Kato}, which should make them clearer. 

In the sequel, we assume a priori estimates in $H^{s}_{0,per}(\sP)$ rather than in $H^{s+2}_{0,per}(\sP)$,  even if $F$ is in the latter space.
We get that  $a_i(x,\nabla F)$ is periodic in $x_2$ and
\begin{equation}
\label{eq: integrale de base}
\int_{\sP}\sum_{i=1}^2\Big(\partial_{x_2}^s\Big(a_i(x,\nabla F)\Big)\Big)\partial_{x_2}^s\partial_{x_i}F\, dx_1 dx_2\,=\,I_1+I_2+I_3+I_4+I_5+I_6+I_7+I_8
\end{equation}
with
\begin{align*}
I_1&:=\int_{\sP}\sum_{i=1}^2\sum_{p=1}^{2}(\partial_{z_p}a_i)\Big(\partial_{x_2}^s\partial_{x_p}F\Big)\Big(\partial_{x_2}^s\partial_{x_i}F\Big)\, dx_1 dx_2
\\
	I_2&:=\int_{\sP}s \sum_{i=1}^{2}\sum_{q=1}^2\sum_{p\neq i}(\partial_{z_p}\partial_{z_q}a_i)\Big(\partial_{x_2}^{s-1}\partial_{x_p}F\Big)\Big(\partial_{x_2}\partial_{x_q}F\Big)\Big(\partial_{x_2}^s\partial_{x_i}F\Big)\, dx_1 dx_2
\\
I_3&:=\int_{\sP}s\sum_{i=1}^2\sum_{q=1}^{2}(\partial_{z_i}\partial_{z_q}a_i)\Big(\partial_{x_2}^{s-1}\partial_{x_i}F\Big)\Big(\partial_{x_2}\partial_{x_q}F\Big)\Big(\partial_{x_2}^s\partial_{x_i}F\Big)\, dx_1 dx_2
\\
I_4&:=\int_{\sP}\sum_{\substack{k\geq 2,i,\alpha,\beta \\|\alpha|=|\alpha_1|+\ldots+|\alpha_k|=s \\ 1\leq \alpha_1,\ldots,\alpha_k\leq s-2}}a_{i,\alpha,\beta}(x,\nabla F)\Big(\partial_{x_2}^{\alpha_1}\partial_{x_{\beta_1}}F\Big)\ldots\Big(\partial_{x_2}^{\alpha_k}\partial_{x_{\beta_k}}F\Big)\Big(\partial_{x_2}^s\partial_{x_i}F\Big)\, dx_1 dx_2
\\
I_5&:=\sum_{i=1}^2\int_{\sP}(\partial_{x_2}^sa_i)\Big(\partial_{x_2}^s\partial_{x_i}F\Big)\, dx_1 dx_2
\\
I_6&:=\int_{\sP}s \sum_{i=1}^{2}\sum_{p\neq i}(\partial_{x_2}\partial_{z_p}a_i)\Big(\partial_{x_2}^{s-1}\partial_{x_p}F\Big)\Big(\partial_{x_2}^s\partial_{x_i}F\Big)\, dx_1 dx_2
\\
I_7&:=\int_{\sP}s\sum_{i=1}^2(\partial_{x_2}\partial_{z_i}a_i)\Big(\partial_{x_2}^{s-1}\partial_{x_i}F\Big)\Big(\partial_{x_2}^s\partial_{x_i}F\Big)\, dx_1 dx_2
\\
I_8&:=\int_{\sP}\sum_{\substack{\mu\geq 1,|\nu|=k\geq 1,i,\alpha,\beta \\\mu+|\alpha|=s \\ 1\leq \alpha_1,\ldots,\alpha_k\leq s-2}}a_{i,\mu,\nu,\alpha,\beta}(x,\nabla F)\Big(\partial_{x_2}^{\alpha_1}\partial_{x_{\beta_1}}F\Big)\ldots\Big(\partial_{x_2}^{\alpha_k}\partial_{x_{\beta_k}}F\Big)\Big(\partial_{x_2}^s\partial_{x_i}F\Big)\, dx_1 dx_2
\end{align*}
In $I_1$--$I_4$, $a_i$ is not differentiated with respect to $x_2$, whereas in $I_5$--$I_8$ it is differentiated at least once with respect to $x_2$.
In $I_4$, it is supposed that the $k$ components of  $\beta$ belong to $\{1,2\}$ and are in a non-decreasing order.

In $I_2$ and $I_3$, the factor
$s$ is obtained for $s\geq 5$ by the multivariate Faa Di Bruno formula, see e.g. \cite{CoSa}.

In $I_4$, $a_{i,\alpha,\beta}$ is a multiple of a partial derivative of $a_i$ of order $k\leq s$ with respect to $z_1,z_2$, $\alpha=(\alpha_1,\ldots,\alpha_k)\in (\NN^*)^k$  (non-zero components) with $|\alpha|=s$, 
and
\begin{equation}
\label{eq: 4 plus}
\alpha_1,\ldots,\alpha_k\leq s-2.\end{equation}
If, in $I_4$,  $k=1$ were allowed and \eqref{eq: 4 plus} omitted, the additional integrals would already be considered in $I_1$, $I_2$ and $I_3$.
Indeed observe that if $\alpha_p=s-1$ for some $p\in\{1,\ldots,k\}$, then $k=2$ and this case is included in $I_2$ and $I_3$.
If $\alpha_p=s$ for some $p\in\{1,\ldots,k\}$, then $k=1$ and this case is included in $I_1$. 
If $k\geq 3$ and $p\in\{1,\ldots,k\}$, then $s=\alpha_1+\ldots+\alpha_k\geq (k-1)+\alpha_p$ and necessarily $\alpha_p\leq s-k+1\leq s-2$ as required by \eqref{eq: 4 plus}.

The same comments hold for $I_6$ and $I_7$ as for $I_2$ and $I_3$.

In $I_8$, $a_{i,\mu,\nu,\alpha,\beta}$ is a multiple of $\partial^{(0,\mu,\nu_1,\nu_2)}a_i$ with $\mu\geq 1$,  $|\nu|=\nu_1+\nu_2=k\geq 1$, $\alpha=(\alpha_1,\ldots,\alpha_k)\in  (\NN^*)^k$ (non-zero components) and $\mu+|\alpha|=s$.

In what follows we shall assume that $s\geq 5$ and $r=||F||_{H^{s}(\sP)}$ small (but at most $\leq 1$).
See its use in \eqref{eq: W' petit consequence}.
We shall rely on the estimate from below \eqref{eq: equation coercivite}
and the symmetry property \eqref{eq: symmetry} to prove \eqref{eq: conclusion} (the main result of this subsection).

\subsubsection{The integral $I_1$}
For $F$ in $H^{s+2}_{0,per}(\sP)$,
$$I_1
\stackrel{\eqref{eq: equation coercivite}}\geq  C_{lc}  ||\partial_{x_2}^sF||_{L^2(\sP)}^2+  C_{lc}  ||\partial_{x_1}\partial_{x_2}^sF||_{L^2(\sP)}^2
$$
by the "semi-coercivity" property \eqref{eq: equation coercivite}
(there is a loss of derivatives of $F$ in the right-hand side of \eqref{eq: equation coercivite}).
\vspace{2mm}

\noindent
The aim of what follows is to ensure that $I_1$ (bounded from below by the square of a semi-norm, hence "almost" in $r^2$) "almost" controls  $I_2$, $I_3$, $I_4$ (we shall see that they are of order $o(r^2)$).
\vspace{2mm}

\subsubsection{The integrals $I_2$ and $I_3$ (inspired by Kohn-Nirenberg \cite{KoNi})}
\label{subsection: KoNi}
$$
I_3+I_2
=\int_{\sP}s\sum_{i=1}^2\sum_{q=1}^2(\partial_{z_i}\partial_{z_q}a_i)\Big(\partial_{x_2}^{s-1}\partial_{x_{i}}F\Big)\Big(\partial_{x_2}\partial_{x_{q}}F\Big)\Big(\partial_{x_2}^s\partial_{x_{i}}F\Big)\, dx_1 dx_2
$$$$
+\int_{\sP}s\sum_{q=1}^2(\partial_{z_2}\partial_{z_q}a_1)\Big(\partial_{x_2}\partial_{x_{q}}F\Big)\Big(\Big(\partial_{x_2}^{s-1}\partial_{x_{2}}F\Big)\Big(\partial_{x_2}^s\partial_{x_{1}}F\Big)
+\Big(\partial_{x_2}^{s-1}\partial_{x_{1}}F\Big)\Big(\partial_{x_2}^s\partial_{x_{2}}F\Big)\Big)\, dx_1 dx_2
$$
thanks to the symmetry hypothesis \eqref{eq: symmetry} (it is used here and in Section \ref{section: minimizer} only).
Integrations by parts give
$$
I_3+I_2
=-\int_{\sP}\frac s 2 \sum_{i=1}^2\sum_{q=1}^2\partial_{x_2}\Big((\partial_{z_i}\partial_{z_q}a_i)\Big(\partial_{x_2}\partial_{x_{q}}F\Big)\Big)\Big(\partial_{x_2}^{s-1}\partial_{x_{i}}F\Big)^2\, dx_1 dx_2
$$$$
-\int_{\sP}s\sum_{q=1}^2\partial_{x_2}\Big((\partial_{z_2}\partial_{z_q}a_1)\Big(\partial_{x_2}\partial_{x_{q}}F\Big)\Big)\Big(\partial_{x_2}^{s-1}\partial_{x_{2}}F\Big)\Big(\partial_{x_2}^{s-1}\partial_{x_{1}}F\Big)\, dx_1dx_2
$$
where we have ensured that
\begin{equation}
\label{eq: W' petit consequence}
||\partial_{z_p}\partial_{z_q}a_i||_{L^\infty(\sP)}=O(1),~~||\partial_{x_2}(\partial_{z_p}\partial_{z_q}a_i)||_{L^\infty}=O(1) ~\text{ and } ~ ||F'||_{W^{2,\infty}(\sP)}=O(r) ~\text{ as }~ s\geq 5 
\end{equation}
(thanks to the Sobolev injection $H^{s}_{0,per}(\sP)\subset W^{3,\infty}(\sP)$ if $s\geq 5$),
so that
$$I_3+I_2=O(r)||F||_{H^{s}(\sP)}^2\,.$$

\subsubsection{The integral $I_4$}
We have
$$\int_{\sP}\sum_{i,k,\alpha,\beta}a_{i,\alpha,\beta}(x,\nabla F)\Big(\partial_{x_2}^{\alpha_1}\partial_{x_{\beta_1}}F\Big)\ldots\Big(\partial_{x_2}^{\alpha_k}\partial_{x_{\beta_k}}F\Big)\Big(\partial_{x_2}^s\partial_{x_{i}}F\Big)\, dx_1 dx_2
$$$$
=-\int_{\sP}\sum_{i,k,\alpha,\beta}\partial_{x_2}\Bigg(a_{i,\alpha,\beta}(x,\nabla F)\Big(\partial_{x_2}^{\alpha_1}\partial_{x_{\beta_1}}F\Big)\ldots\Big(\partial_{x_2}^{\alpha_k}\partial_{x_{\beta_k}}F\Big)\Bigg)\Big(\partial_{x_2}^{s-1}\partial_{x_{i}}F\Big)\, dx_1 dx_2
$$$$
=-\int_{\sP}\sum_{i,k,\alpha,\beta}\partial_{x_2}\Big(a_{i,\alpha,\beta}(x,\nabla F)\Big)\Big(\partial_{x_2}^{\alpha_1}\partial_{x_{\beta_1}}F\Big)\ldots\Big(\partial_{x_2}^{\alpha_k}\partial_{x_{\beta_k}}F\Big)\Big(\partial_{x_2}^{s-1}\partial_{x_{i}}F\Big)\, dx_1dx_2
$$$$
-\int_{\sP}\sum_{i,k,\alpha,\beta}a_{i,\alpha,\beta}\Big(\partial_{x_2}^{\alpha_1+1}\partial_{x_{\beta_1}}F\Big)\ldots\Big(\partial_{x_2}^{\alpha_k}\partial_{x_{\beta_k}}F\Big)\Big(\partial_{x_2}^{s-1}\partial_{x_{i}}F\Big)\, dx_1dx_2
$$$$
-\ldots-\int_{\sP}\sum_{i,k,\alpha,\beta}a_{i,\alpha,\beta}\Big(\partial_{x_2}^{\alpha_1}\partial_{x_{\beta_1}}F\Big)\ldots\Big(\partial_{x_2}^{\alpha_k+1}\partial_{x_{\beta_k}}F\Big)\Big(\partial_{x_2}^{s-1}\partial_{x_{i}}F\Big)\, dx_1 dx_2\,.
$$
Thanks to \eqref{eq: Kato bis} applied to second-order partial derivatives of $F$, one gets when $k\geq 3$, as $|\alpha|=s$ and $\alpha_1,\ldots,\alpha_k\geq 1$, that the corresponding terms in $I_4$ are of the form
$$O(1)||F''||_{L^{\infty}(\sP)}^{k-1}||F''||_{H^{s-k}(\sP)}||F||_{H^{s}(\sP)}
+O(1)||F''||_{L^{\infty}(\sP)}^{k-1}||F''||_{H^{s-k+1}(\sP)}||F||_{H^{s}(\sP)}
$$$$
=O(r^k)||F||_{H^{s}(\sP)}
$$
where the first $O(1)$ is obtained as in \eqref{eq: W' petit consequence}.\footnote{When estimating the first sum, $a_{i,\alpha,\beta}$,  which is a multiple of a partial derivative of $a_i$ of order $k\leq s$ with respect to $z_1,z_2$, is differentiated once again, and thus at most $s+1$ times.}
When $k=2$, \eqref{eq: 4 plus} ensures that $\alpha_1,\alpha_2\geq 2$ and \eqref{eq: Kato bis} (applied to third-order partial derivatives of $F$) gives this time that the corresponding terms in $I_4$ are of the form
$$O(1)||F'''||_{L^{\infty}(\sP)}||F'''||_{H^{s-2k}(\sP)}||F||_{H^{s}(\sP)}
+O(1)||F'''||_{L^{\infty}(\sP)}||F'''||_{H^{s-2k+1}(\sP)}||F||_{H^{s}(\sP)}
$$$$
=O(r^2)||F||_{H^{s}(\sP)}
$$
by \eqref{eq: W' petit consequence}.

\subsubsection{The integral $I_5$}
We have, by an integration by parts,
$$\left|\sum_{i=1}^2\int_{\sP}(\partial_{x_2}^sa_i)\Big(\partial_{x_2}^s\partial_{x_i}F\Big)\, dx_1 dx_2\right|
\leq  ||\partial_{x_2}^{s+1}a(x,\nabla F)||_{L^2(\sP)}||\partial_{x_2}^{s-1}\nabla F||_{L^2(\sP)}\,.$$
Hence we shall require a smallness assumption on $\left\|\max_{|z|\leq \rho}|\partial_{x_2}^{s+1}a(x,z)|\,\right\|_{L^2(\sP)}$.

\subsubsection{The integrals $I_6$ and $I_7$}
As for $I_2$ and $I_3$, one gets after an integration by part
$$ |I_6 + I_7|
\leq \operatorname{Const}\max_{|\nu|=1}\left\|\partial_{x_2}\left((\partial^{(0,1,\nu_1,\nu_2)}a)(x,\nabla F)\right)\right\|_{L^\infty(\sP)} ||\partial_{x_2}^{s-1}\nabla F||_{L^2(\sP)}^2\,.$$
Hence we shall require a smallness assumption on $\left\|\max_{|z|\leq \rho}|\partial_{x_2}^{2}\nabla_z a(x,z)|\,\right\|_{L^\infty(\sP)}$ (and use that $||F''||_{L^\infty(\sP)}=O(r)$ as $s\geq 5$).

\subsubsection{The integral $I_8$}
\label{subsection: an estimate on an integral}
For $k\geq 2$, one gets $1\leq \mu\leq s-2$ and, by an integration by parts,
$$\left|\int_{\sP}\partial^{(0,\mu,\nu_1,\nu_2)}a_i(x,\nabla F)\Big(\partial_{x_2}^{\alpha_1}\partial_{x_{\beta_1}}F\Big)\ldots\Big(\partial_{x_2}^{\alpha_k}\partial_{x_{\beta_k}}F\Big)\Big(\partial_{x_2}^s\partial_{x_i}F\Big)\, dx_1 dx_2\right|
$$$$
\leq \operatorname{Const}||\partial^{(0,\mu,\nu_1,\nu_2)}a_i(x,\nabla F)||_{L^\infty(\sP)}||F''||_{H^{s+1-\mu-k}_{per}(\sP)}||F''||_{L^\infty(\sP)}^{k-1}||\partial_{x_2}^{s-1}\nabla F||_{L^2(\sP)}
$$$$
+ \operatorname{Const}||\partial^{(0,\mu+1,\nu_1,\nu_2)}a_i(x,\nabla F)||_{L^\infty(\sP)}||F''||_{H^{s-\mu-k}_{per}(\sP)}||F''||_{L^\infty(\sP)}^{k-1}
||\partial_{x_2}^{s-1}\nabla F||_{L^2(\sP)}
$$$$
+ \operatorname{Const}||\partial^{(0,\mu,1+\nu_1,\nu_2)}a_i(x,\nabla F)||_{L^\infty(\sP)}||F''||_{H^{s-\mu-k}_{per}(\sP)}||F''||_{L^\infty(\sP)}^{k}
||\partial_{x_2}^{s-1}\nabla F||_{L^2(\sP)}
$$$$
+ \operatorname{Const}||\partial^{(0,\mu,\nu_1,1+\nu_2)}a_i(x,\nabla F)||_{L^\infty(\sP)}||F''||_{H^{s-\mu-k}_{per}(\sP)}||F''||_{L^\infty(\sP)}^{k}
||\partial_{x_2}^{s-1}\nabla F||_{L^2(\sP)}
\,.$$

For $k=1$, we get $\mu=s-\alpha_1$ with $1\leq \alpha_1\leq s-2$, and therefore an integration by parts gives
$$\left|\int_{\sP}\partial^{(0,\mu,\nu_1,\nu_2)}a_i(x,\nabla F)\Big(\partial_{x_2}^{\alpha_1}\partial_{x_{\beta_1}}F\Big)\Big(\partial_{x_2}^s\partial_{x_i}F\Big)\, dx_1 dx_2\right|
$$$$
\leq \operatorname{Const}||\partial^{(0,s-\alpha_1,\nu_1,\nu_2)}a_i(x,\nabla F)\partial_{x_2}^{\alpha_1+1}\nabla F||_{L^2(\sP)} ||\partial_{x_2}^{s-1}\nabla F||_{L^2(\sP)}
$$$$
+ \operatorname{Const}||\partial^{(0,s-\alpha_1+1,\nu_1,\nu_2)}a_i(x,\nabla F)\partial_{x_2}^{\alpha_1}\nabla F||_{L^2(\sP)}||\partial_{x_2}^{s-1}\nabla F||_{L^2(\sP)}
 +O(r^2)||\partial_{x_2}^{s-1}\nabla F||_{L^2(\sP)} 
$$$$
\leq  \sum_{\tilde \alpha_1=1}^{s-1}\operatorname{Const}||\partial^{(0,s-\tilde \alpha_1+1,\nu_1,\nu_2)}a_i(x,\nabla F)||_{L^\infty(\sP)}||\partial_{x_2}^{\tilde \alpha_1}\nabla F||_{L^2(\sP)} ||\partial_{x_2}^{s-1}\nabla F||_{L^2(\sP)}
$$$$
+O(r^2)||\partial_{x_2}^{s-1}\nabla F||_{L^2(\sP)} 
$$$$
\leq \operatorname{Const}\left(||F||_{H^{s-1}(\sP)}
+\left\|\max_{|z|\leq \rho}|\partial_{x_2}^{2}\nabla_z a(x,z)|\,\right\|_{L^\infty(\sP)}||\partial_{x_2}^{s-1}\nabla F||_{L^2(\sP)}+r^2\right)||\partial_{x_2}^{s-1}\nabla F||_{L^2(\sP)}
$$
with $\nu_1+\nu_2=1$.
Hence  a smallness assumption on $\left\|\max_{|z|\leq \rho}|\partial_{x_2}^{2}\nabla_z a(x,z)|\,\right\|_{L^\infty(\sP)}$ is again useful.

\subsubsection{End of the proof of \eqref{eq: conclusion}}
The various estimates obtained on the integrals $I_1$ to $I_8$ imply inequality \eqref{eq: conclusion}.
The terms needing more careful estimates because of the $x$-dependence appear in $I_5$--$I_8$.

\subsection{Appendix}
This Appendix contains some simple but useful intermediate results stated for the toy problem.
First, we state a technical lemma concerning Sobolev spaces
for domains in $\mathbb R^2$ that are bounded in the first variable and periodic in the second. The proof relies on Gagliardo-Nirenberg interpolation estimates in bounded domains. Inequality \eqref{eq: Kato bis} below is an adaptation of the very useful Lemma A1 in \cite{Kato} which dealt with functions defined on the $k$-torus. Domains in $\mathbb R^3$ that are bounded in the first variable and periodic in the two last ones can be dealt with in the same way.

\begin{lemma}
Let the integers $m_1,m_2\geq 0$  be such that $m=m_1+m_2$ with $m_1<m$.
Given $\epsilon>0$, for all $u\in C^\infty([0,1]\times \RR)$ that is $1$-periodic in $x_2$
\begin{equation}
\label{eq: a Sobolev interpolation}
||\partial_{x_1}^{m_1}\partial_{x_2}^{m_2}u||_{L^2(\sP)}^2
	\leq \epsilon||\partial_{x_1}^m u||_{L^2(\sP)}^2
+C_\epsilon||\partial_{x_2}^mu||_{L^2(\sP)}^2
\end{equation}
for some constant $C_\epsilon>0$ that depends on $\epsilon$.
In addition, let $\alpha_j\in \mathbb N^2$, $j=1,2,\ldots,k$, be multi-indices with $|\alpha_1|+\ldots+|\alpha_k|=p$ ($|\alpha_j|$ being the sum of the two components of $\alpha_j$). 
For all $u_1,\ldots,u_k\in H^p(\sP)$,
\begin{equation}
\label{eq: Kato bis}
||(\partial^{\alpha_1}u_1)\cdots(\partial^{\alpha_k}u_k)||_{L^2(\sP)}
\leq c ||u_1||_{H^p(\sP)}||u_2||_{L^\infty(\sP)}\cdots ||u_k||_{L^\infty(\sP)} +\text{(cycl)}\,
\end{equation}
where $c>0$ is some constant and "(cycl)" means a summation on the $k$ cyclic permutations of the indices $1,\ldots,k$ (if $k\geq 2$).
\end{lemma}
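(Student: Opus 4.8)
The plan is to prove the two inequalities \eqref{eq: a Sobolev interpolation} and \eqref{eq: Kato bis} by reducing everything to standard Gagliardo--Nirenberg interpolation on a bounded domain, handling the periodic second variable by viewing $\sP$ as a domain with Lipschitz boundary on which the classical interpolation inequalities hold (periodicity in $x_2$ causes no trouble: one may either work on the flat cylinder $[0,1]\times\mathbb{T}$, or extend periodically and restrict to $[0,1]\times]-1,2[$, applying the interpolation estimate on that larger set and using periodicity to bound the resulting norms by those on $\sP$). For \eqref{eq: a Sobolev interpolation}, the key point is the anisotropic interpolation between pure $x_1$-derivatives of order $m$ and pure $x_2$-derivatives of order $m$ controlling the mixed derivative $\partial_{x_1}^{m_1}\partial_{x_2}^{m_2}u$ of total order $m$. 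The route is: first establish the one-dimensional analogue in $x_1$ on $[0,1]$ (for fixed $x_2$), namely $\|u^{(m_1)}\|_{L^2(0,1)}^2 \le \epsilon \|u^{(m)}\|_{L^2(0,1)}^2 + C_\epsilon \|u\|_{L^2(0,1)}^2$, which is the classical interpolation inequality for intermediate derivatives on an interval (provable by a compactness/contradiction argument or directly); then integrate in $x_2$; then do the same trick in the $x_2$ variable to replace $\|u\|_{L^2}^2$ by a controlled combination. Actually a cleaner organization: apply the scalar interpolation lemma on $[0,1]^2$ to the function $\partial_{x_2}^{m_2}u$ in the $x_1$ variable to get $\|\partial_{x_1}^{m_1}\partial_{x_2}^{m_2}u\|_{L^2}^2 \le \epsilon\|\partial_{x_1}^m\partial_{x_2}^{m_2}u\|_{L^2}^2 + C\|\partial_{x_2}^{m_2}u\|_{L^2}^2$, but this introduces too many $x_1$-derivatives; instead one should interpolate directly on the mixed derivative viewed as a single intermediate derivative between the two extreme pure derivatives, which is exactly the content of the anisotropic Gagliardo--Nirenberg estimate, and I would quote it from the literature (e.g. Nirenberg's original paper, or Besov--Il'in--Nikol'ski\u\i) rather than reprove it.

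For \eqref{eq: Kato bis}, the argument follows Kato's Lemma A1 almost verbatim, with the torus replaced by $\sP$. The idea is to apply H\"older's inequality with exponents $p_j = 2p/|\alpha_j|$ (so that $\sum 1/p_j = \sum |\alpha_j|/(2p) = 1/2$), giving
\[
\|(\partial^{\alpha_1}u_1)\cdots(\partial^{\alpha_k}u_k)\|_{L^2(\sP)} \le \prod_{j=1}^k \|\partial^{\alpha_j}u_j\|_{L^{2p/|\alpha_j|}(\sP)},
\]
and then invoke the Gagliardo--Nirenberg inequality on $\sP$ in the form
\[
\|\partial^{\alpha_j}u_j\|_{L^{2p/|\alpha_j|}(\sP)} \le c\,\|u_j\|_{H^p(\sP)}^{|\alpha_j|/p}\,\|u_j\|_{L^\infty(\sP)}^{1-|\alpha_j|/p}
\]
(valid since $|\alpha_j| \le p$; when $|\alpha_j| = p$ the right side is just $c\|u_j\|_{H^p}$, and when $|\alpha_j| = 0$ it is $c\|u_j\|_{L^\infty}$). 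Multiplying these out gives
\[
\|(\partial^{\alpha_1}u_1)\cdots(\partial^{\alpha_k}u_k)\|_{L^2(\sP)} \le c\prod_{j=1}^k \|u_j\|_{H^p(\sP)}^{|\alpha_j|/p}\|u_j\|_{L^\infty(\sP)}^{1-|\alpha_j|/p},
\]
and since $\sum_j |\alpha_j|/p = 1$, the weighted AM--GM inequality (applied to the factors $\|u_j\|_{H^p}$, which dominate $\|u_j\|_{L^\infty}$ up to a Sobolev constant when $p$ is large enough, or more carefully by Young's inequality distributing the total weight $1$ among the $k$ terms) bounds the product by the symmetric sum $c\big(\|u_1\|_{H^p}\|u_2\|_{L^\infty}\cdots\|u_k\|_{L^\infty} + \text{(cycl)}\big)$. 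One must be slightly careful that the Gagliardo--Nirenberg inequality used has no lower-order additive term; on a bounded domain the clean multiplicative form $\|\partial^\alpha u\|_{L^q} \le c\|u\|_{H^p}^\theta\|u\|_{L^\infty}^{1-\theta}$ holds without an extra $\|u\|_{L^\infty}$ additive term precisely because $L^\infty$ already appears, so this is fine.

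The main obstacle, and the place requiring genuine care rather than citation, is the anisotropic interpolation inequality \eqref{eq: a Sobolev interpolation}: the standard Gagliardo--Nirenberg estimates are usually stated isotropically, whereas here the two ``endpoint'' seminorms $\|\partial_{x_1}^m u\|_{L^2}$ and $\|\partial_{x_2}^m u\|_{L^2}$ are directional, and one needs the mixed seminorm in between. The cleanest self-contained proof I would give is: fix $u$ and use a Fourier expansion in $x_2$, $u(x_1,x_2) = \sum_{\ell\in\mathbb{Z}} u_\ell(x_1)e^{2\pi i\ell x_2}$; then $\|\partial_{x_1}^{m_1}\partial_{x_2}^{m_2}u\|_{L^2(\sP)}^2 = \sum_\ell |2\pi\ell|^{2m_2}\|u_\ell^{(m_1)}\|_{L^2(0,1)}^2$, and for each $\ell$ apply the one-dimensional intermediate-derivative interpolation $\|u_\ell^{(m_1)}\|_{L^2(0,1)}^2 \le \delta\|u_\ell^{(m)}\|_{L^2(0,1)}^2 + C_\delta \delta^{-m_1/(m-m_1)}\|u_\ell\|_{L^2(0,1)}^2$ with $\delta = \delta_\ell$ chosen $\ell$-dependently so that, after multiplying by $|2\pi\ell|^{2m_2}$ and summing, the first piece yields $\epsilon\sum_\ell\|u_\ell^{(m)}\|_{L^2}^2 \le \epsilon\|\partial_{x_1}^m u\|_{L^2}^2$ and the second yields $C_\epsilon\sum_\ell|2\pi\ell|^{2m}\|u_\ell\|_{L^2}^2 = C_\epsilon\|\partial_{x_2}^m u\|_{L^2}^2$ (one chooses $\delta_\ell \sim \epsilon|2\pi\ell|^{-2m_1}$ times a constant so the weight bookkeeping works, using $m_2 + m_1 = m$); the needed scaling of the constant in the scalar inequality under $\delta \to \delta_\ell$ is exactly the homogeneity of the Gagliardo--Nirenberg exponents. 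I would present this Fourier-in-$x_2$ reduction as the core argument and defer the elementary one-dimensional interpolation on $[0,1]$ to a standard reference. The remark that the same works for domains in $\mathbb{R}^3$ bounded in the first variable and periodic in the other two is immediate since one then expands in a double Fourier series in $(x_2,x_3)$ and the $x_1$-interpolation is unchanged.
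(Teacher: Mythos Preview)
Your approach is essentially identical to the paper's: for \eqref{eq: a Sobolev interpolation} you expand in Fourier series in $x_2$ and apply the one-dimensional intermediate-derivative inequality on $[0,1]$ with an $\ell$-dependent parameter, and for \eqref{eq: Kato bis} you use H\"older with exponents $2p/|\alpha_j|$, Gagliardo--Nirenberg, and AM--GM exactly as the paper does. One small slip: the correct choice is $\delta_\ell \sim \epsilon\,|2\pi\ell|^{-2m_2}$ (not $|2\pi\ell|^{-2m_1}$), so that $|2\pi\ell|^{2m_2}\delta_\ell = \epsilon$ and $|2\pi\ell|^{2m_2}\delta_\ell^{-m_1/m_2} = \epsilon^{-m_1/m_2}|2\pi\ell|^{2m}$; with this correction your weight bookkeeping goes through and matches the paper line for line.
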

\begin{proof} Using Fourier's series in $x_2$,
$$u(x_1,x_2)=\sum_{k_2\in\mathbb Z} f_{k_2}(x_1)e^{2i\pi k_2 x_2}$$
with $f_{-k_2}=\overline{f_{k_2}}$,
we get
$$||\partial_{x_1}^{m_1}\partial_{x_2}^{m_2}u||_{L^2(\sP)}^2
=\sum_{k_2\in\mathbb Z} (2\pi k_2)^{2m_2}||f_{k_2}^{(m_1)}||_{L^2(0,1)}^{2},
$$
$$||\partial_{x_1}^{m}u||_{L^2(\sP)}^2
=\sum_{k_2\in\mathbb Z} ||f_{k_2}^{(m)}||_{L^2(0,1)}^{2}
$$
and
$$
||\partial_{x_2}^{m}u||_{L^2(\sP)}^2
=\sum_{k_2\in\mathbb Z\setminus\{0\}} (2\pi k_2)^{2m}||f_{k_2}||_{L^2(0,1)}^{2}.
$$
From \cite[Theorem 5.1]{Adams-Fournier} (see also Theorems 7.40 and Theorems 7.41 in \cite{Le}), for each $\epsilon>0$ there is $K_\epsilon\geq 1$ such that for all $f\in H^{(m)}(0,1)$ and all $0<\delta\leq \epsilon$,
\begin{equation}
\label{eq: 1 d interpolation bis}
||f^{(m_1)}||_{L^2(0,1)}^{2}\leq K_\epsilon\left(\delta ||f^{(m)}||_{L^2(0,1)}^{2}+
\delta^{-m_1/(m-m_1)}||f||_{L^2(0,1)}^{2}\right).\end{equation}
Applying this estimate to $f=f_{k_2}$ and $\delta=(2\pi k_2)^{-2m_2}\epsilon/K_\epsilon$, we get, for each $k_2\in\mathbb Z\setminus\{0\}$,
$$
(2\pi k_2)^{2m_2}||f_{k_2}^{(m_1)}||_{L^2(0,1)}^{2}\leq \epsilon ||f_{k_2}^{(m)}||_{L^2(0,1)}^{2}+(2\pi k_2)^{2m}K_\epsilon^{1+\frac{m_1}{m_2}}\, \epsilon^{-m_1/m_2}||f_{k_2}||_{L^2(0,1)}^{2}.
$$
This inequality is also true for $k_2=0$, since in this case the left-hand side is zero. Thus, summing up over all values of $k_2\in \mathbb Z$, and taking $C_\varepsilon:=K_\epsilon^{1+\frac{m_1}{m_2}}\, \epsilon^{-m_1/m_2}$, we obtain \eqref{eq: a Sobolev interpolation}.

$$$$

We now prove \eqref{eq: Kato bis}. Note that this proof is essentially contained in \cite{Kato}, we only repeat it here for the reader's convenience. By H\"{o}lder's inequality,
$$
||(\partial^{\alpha_1}u_1)\cdots(\partial^{\alpha_k}u_k)||_{L^2(\sP)}
\leq ||\partial^{\alpha_1}u_1||_{L^{\frac{2p}{|\alpha_1|}}(\sP)}\cdots
||\partial^{\alpha_k}u_k||_{L^{\frac{2p}{|\alpha_k|}}(\sP)},
$$
 where $L^{\frac{2p}{|\alpha_i|}}(\sP)=L^\infty(\sP)$ if $|\alpha_i|=0$.
By the Gagliardo-Nirenberg inequality in bounded domains (see \cite{Nirenberg}), there is a positive constant $C$ such that for each $1\leq i\leq k$,
$$
||\partial^{\alpha_i}u_i||_{L^{\frac{2p}{|\alpha_i|}}(\sP)}\leq C ||u_i||^{\frac{|\alpha_i|}{p}}_{H^{p}(\sP)}||u_i||_{L^{\infty}(\sP)}^{1-\frac{|\alpha_i|}{p}},
$$
so, taking the product of these inequalities and using the identity $1-\frac{|\alpha_i|}{p}=\sum_{j\neq i} \frac{|\alpha_j|}{p}$ to rearrange the terms of the right-hand side,
$$
\prod_{i=1}^k||\partial^{\alpha_i}u_i||_{L^{\frac{2p}{|\alpha_i|}}(\sP)}\leq C^k\prod_{i=1}^k
\left(||u_i||_{H^{p}(\sP)}\prod_{j\neq i}||u_j||_{L^{\infty}(\sP)}\right)^{\frac{|\alpha_i|}{p}}.$$
Thus, using the inequality of arithmetic and geometric means,
$$\prod_{i=1}^k||\partial^{\alpha_i}u_i||_{L^{\frac{2p}{|\alpha_i|}}(\sP)}\leq C^k\sum_{i=1}^k
\frac{|\alpha_i|}{p}||u_i||_{H^{p}(\sP)}\prod_{j\neq i}||u_j||_{L^{\infty}(\sP)},
$$
hence \eqref{eq: Kato bis} with $c:=C^k$.

\end{proof}

\begin{lemma}
\label{lemme: controle a}
$\partial_{x_2}^{s-1}(a(x,\nabla w))\in L^2(\sP)^2$ for all $w\in Y$, and 
$$
w_n\rightharpoonup w \text{ in }Y~~\Rightarrow~~
\partial_{x_2}^{s-1}(a(x,\nabla w_n))\rightharpoonup\partial_{x_2}^{s-1}(a(x,\nabla w))\text{ in }L^2(\sP)^2\,.
$$
\end{lemma}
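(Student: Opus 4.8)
The plan is to reduce the claim to the Faà di Bruno expansion of $\partial_{x_2}^{s-1}\big(a_i(x,\nabla w)\big)$ into a finite sum of terms, each of which is a product of a composition $\big(\partial^{(\mu,\nu)}a_i\big)(x,\nabla w)$ with several partial derivatives of the components of $\nabla w$ of total order $s-1$, and then to show (i) that each such term lies in $L^2(\sP)$ and (ii) that each such term is weakly sequentially continuous in $w\in Y$. Concretely, writing the chain rule for differentiating $x_2\mapsto a_i(x_1,x_2,\partial_{x_1}w,\partial_{x_2}w)$ exactly $s-1$ times, a generic summand has the shape
\begin{equation*}
g_{\mu,\nu}(x,\nabla w)\,\big(\partial^{\gamma_1}\widetilde w_1\big)\cdots\big(\partial^{\gamma_\ell}\widetilde w_\ell\big),\qquad \widetilde w_j\in\{\partial_{x_1}w,\partial_{x_2}w\},
\end{equation*}
where $g_{\mu,\nu}$ is a fixed multiple of $\partial^{(\mu,\nu)}a_i$, the multi-indices $\gamma_j$ are nonzero, $|\nu|=\ell$, and $|\mu|+|\gamma_1|+\cdots+|\gamma_\ell|=s-1$ (all $x_2$-derivatives; $\mu$ here collects the $x_2$-order falling on $a_i$ directly). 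Since $s\ge 5$, each $\widetilde w_j\in H^{s-1}_{per}(\sP)\subset L^\infty(\sP)$, and by \eqref{eq: Kato bis} applied to the first-order derivatives $\widetilde w_1,\dots,\widetilde w_\ell\in H^{s-1}_{per}(\sP)$ we get
\begin{equation*}
\big\|\big(\partial^{\gamma_1}\widetilde w_1\big)\cdots\big(\partial^{\gamma_\ell}\widetilde w_\ell\big)\big\|_{L^2(\sP)}\le c\,\|w\|_{H^s(\sP)}\,\|w\|_{W^{1,\infty}(\sP)}^{\ell-1},
\end{equation*}
while $g_{\mu,\nu}(x,\nabla w)\in C^0_{per}(\overline\sP)$ because $\nabla w\in C^{s-3}_{per}(\overline\sP)^2$ and $a$ is $C^{s+1}$. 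Hence each summand, and therefore $\partial_{x_2}^{s-1}(a_i(x,\nabla w))$, lies in $L^2(\sP)$; this proves the first assertion.

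For the weak-continuity statement, suppose $w_n\rightharpoonup w$ in $Y=H^s_{0,per}(\sP)$. By the compact Sobolev embedding $H^s_{0,per}(\sP)\hookrightarrow C^{s-2}_{per}(\overline\sP)$ used repeatedly in the paper, $\nabla w_n\to\nabla w$ in $C^{s-3}_{per}(\overline\sP)^2$, so in particular $g_{\mu,\nu}(x,\nabla w_n)\to g_{\mu,\nu}(x,\nabla w)$ uniformly on $\overline\sP$, and $\|\nabla w_n\|_{W^{1,\infty}(\sP)}$ is bounded (again by the compact embedding, since $\nabla w_n$ converges in $C^1$). In each summand I would split the highest-order factor from the lower-order ones: among $\partial^{\gamma_1}\widetilde w_1,\dots,\partial^{\gamma_\ell}\widetilde w_\ell$ the factor of highest differentiation order has order at most $s-1$ on a first-order derivative of $w$, hence is bounded in $L^2(\sP)$ and, since $w_n\rightharpoonup w$ in $H^s$, converges weakly in $L^2(\sP)$ (the map $w\mapsto\partial^{\gamma}(\partial_{x_j}w)$ is bounded linear $H^s\to L^2$, hence weak-to-weak continuous); all the remaining factors, namely $g_{\mu,\nu}(x,\nabla w_n)$ and the lower-order $\partial^{\gamma_j}\widetilde w_{j}$ (each of differentiation order $\le s-3$ on $\nabla w_n$, hence converging \emph{strongly} in $L^\infty(\sP)$ by the $C^{s-3}$ convergence), converge strongly in $L^\infty(\sP)$. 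The product of a weakly $L^2$-convergent sequence with a strongly $L^\infty$-convergent (uniformly bounded) sequence converges weakly in $L^2$; summing over the finitely many terms gives $\partial_{x_2}^{s-1}(a(x,\nabla w_n))\rightharpoonup\partial_{x_2}^{s-1}(a(x,\nabla w))$ in $L^2(\sP)^2$, as claimed.

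The main obstacle is purely bookkeeping: one must verify that in \emph{every} term of the Faà di Bruno expansion exactly one factor can carry as many as $s-1$ derivatives of a first-order derivative of $w$ (so it sits in $L^2$ with only weak convergence), while \emph{all} other factors, and the coefficient $g_{\mu,\nu}(x,\nabla w)$, involve at most $s-3$ derivatives of $\nabla w$ and therefore converge strongly in $L^\infty$ by the compact embedding $H^s_{0,per}(\sP)\hookrightarrow C^{s-2}_{per}(\overline\sP)$. This is where $s\ge 5$ is used (it guarantees $s-3\ge 2$ and $H^{s-1}_{per}(\sP)\subset L^\infty(\sP)$); the combinatorial argument is exactly the one already carried out in the proof of Proposition \ref{prop: semi-coerc} for $\partial_{x_1}^{s-2}\operatorname{div}(a(x,\nabla F))$, and I would refer to it rather than reproduce it. Once the split "one weakly convergent $L^2$ factor, everything else strongly convergent in $L^\infty$" is in place, the conclusion is immediate.
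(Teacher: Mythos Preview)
Your proposal is correct and follows essentially the same route as the paper: Fa\`a di Bruno expansion of $\partial_{x_2}^{s-1}(a_i(x,\nabla w))$ into products $g(x,\nabla w)\,\partial_{x_2}^{\ell_1}\tilde w_1\cdots\partial_{x_2}^{\ell_k}\tilde w_k$, the Kato-type product estimate \eqref{eq: Kato bis} for the $L^2$ bound, and, for the weak continuity, the observation that at most one factor can carry order $\ge s-2$ (your equivalent formulation being that all non-highest factors have order $\le(s-1)/2\le s-3$ for $s\ge5$) so that one factor converges weakly in $L^2$ while the coefficient and remaining factors converge strongly in $L^\infty$ via the compact embedding $H^s_{0,per}(\sP)\hookrightarrow C^{s-2}_{per}(\overline\sP)$. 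The paper phrases the splitting as ``at most one of $\ell_1,\dots,\ell_k$ belongs to $\{s-2,s-1\}$'' rather than isolating the single highest-order factor, but the two formulations are equivalent and the argument is the same.
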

\begin{proof}
Concerning compositions of functions, if $g:\mathbb R^p\rightarrow \mathbb R$ is of class $C^1$ and if $f=(f_1,\ldots, f_p):(0,1)^2\rightarrow \mathbb R^p$ is in $H^1((0,1)^2)\cap L^\infty ((0,1)^2)$, then $g\circ f$ is in $H^1((0,1)^2)$ and, for all $j\in\{1,2\}$,
$$\frac{\partial(g\circ f)}{\partial x_j}(x)=\sum_{k=1}^{p}\frac{\partial g}{\partial y_k}(f(x))\frac{\partial f_k}{\partial x_j}(x).$$
When $p=1$ and $g$ has a bounded derivative, see e.g.  Proposition 9.5 in \cite{Brezis} applied to the open set $(0,1)^2$.
This formula can be applied for example when $p>2$, $f_1(x)=x_1$ and $f_2(x)=x_2$.

One gets, for $w\in Y$ and $i\in\{1,2\}$, that $\partial_{x_2}^{s-1}a_i(x,\nabla w)$ is a finite sum of terms of the form
$$g(x,\nabla w)\partial_{x_2}^{\ell_1}\tilde w_1\cdots \partial_{x_2}^{\ell_k}\tilde w_k\,,$$
where
$$k\in\{0,\ldots,s-1\},~~\ell_1,\ldots, \ell_k\geq 1,~~\ell_1+\ldots+\ell_k\leq s-1,~~\tilde w_1,\ldots,\tilde w_k\in\{\partial_{x_1}w,\partial_{x_2}w\}$$
and $g$ is a partial derivative of order $\geq k$ of $a_i$ (there could be many terms of type $k$).
Apply Lemma A1 in \cite{Kato} for each fixed $x_1$ and then integrate over $x_1$ to obtain that
$$||g(x,\nabla w)\partial_{x_2}^{\ell_1}\tilde w_1\cdots \partial_{x_2}^{\ell_k}\tilde w_k||_{L^2(\sP)}^2\leq \operatorname{Const}||g(x,\nabla w)||_{L^\infty}^2||\nabla w||_{L^\infty}^{2(k-1)}\big(\sum_{\ell=0}^{s-1}||\partial_{x_2}^{\ell}\nabla w||^2_{L^2}\Big)\,.$$
When $k=0$, this becomes simply $||g(x,\nabla w)||_{L^2(\sP)}^2 \leq ||g(x,\nabla w)||_{L^\infty}^2$.

As $s\geq 5,$ the Sobolev injection $H^s_{0,per}(\sP)\subset C_{per}^{s-2}(\overline \sP)$ implies that $\nabla w \in L^\infty(\sP)^2$.
Moreover the hypothesis $w\in Y$
ensures that $\partial_{x_2}^{\ell}\nabla w\in L^{2}(\sP)^2\,$ for $1\leq \ell\leq s-1$.
This proves that $\partial_{x_2}^{s-1}(a(x,\nabla w))\in L^2(\sP)^2$ for all $w\in Y$.
The injection $H^s_{0,per}(\sP)\subset C_{per}^{s-2}(\overline \sP)$ being compact, the weak convergence $w_n\rightharpoonup w$ in $Y$ implies that the following limits are uniform:
$$\partial_{x_2}^{\ell}\nabla w_n\rightarrow \partial_{x_2}^{\ell}\nabla w
~\text{ for all  }~
\ell\in\{0,\ldots,s-3\}$$
In addition
$$\partial_{x_2}^{\ell}\nabla w_n\rightharpoonup \partial_{x_2}^{\ell}\nabla w
~\text{ for  }~
\ell\in\{s-2,s-1\}.$$
Hence
$$g(x,\nabla w_n)\partial_{x_2}^{\ell_1}(\tilde w_1)_n\cdots \partial_{x_2}^{\ell_k}(\tilde w_k)_n \rightharpoonup
g(x,\nabla w)\partial_{x_2}^{\ell_1} \tilde w_1 \cdots \partial_{x_2}^{\ell_k}\tilde w_k,
~~ (\tilde w_1)_n,\ldots,(\tilde w_k)_n\in\{\partial_{x_1}w_n,\partial_{x_2}w_n\},$$
because at most one of $\ell_1,\ldots,\ell_k$ belongs to $\{s-2,s-1\}$.
This proves the second statement.
\end{proof}

\begin{corollary}
\label{cor: controle a}
$\partial_{x_2}^{s}(a(x,\nabla w))\in L^2(\sP)^2$ for $w\in V$.
\end{corollary}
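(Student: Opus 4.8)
The plan is to rerun the argument proving Lemma~\ref{lemme: controle a}, exploiting that $w\in V=H^{s+2}_{0,per}(\sP)$ supplies one more $x_2$-derivative of $\nabla w$ than mere membership in $Y$ does. One cannot literally invoke Lemma~\ref{lemme: controle a} with $s$ replaced by $s+2$, since that would require $a\in C^{s+3}$, whereas only $C^{s+1}$ is assumed; but the proof of that lemma carries over verbatim once the bound on the order of differentiation of $a$ is relaxed from $s-1$ to $s$. First I would record, from the iterated chain-rule expansion of $x\mapsto a_i(x,\nabla w(x))$ used there, that for $i\in\{1,2\}$ the function $\partial_{x_2}^{s}\big(a_i(x,\nabla w)\big)$ is a finite sum of terms
$$g(x,\nabla w)\,\partial_{x_2}^{\ell_1}\tilde w_1\cdots\partial_{x_2}^{\ell_k}\tilde w_k,$$
with $k\in\{0,\ldots,s\}$, $\ell_1,\ldots,\ell_k\ge 1$, $\ell_1+\cdots+\ell_k\le s$, each $\tilde w_j\in\{\partial_{x_1}w,\partial_{x_2}w\}$, and $g$ a partial derivative of $a_i$ of total order between $k$ and $s$; since $a\in C^{s+1}$ each such $g$ is continuous, and since $\nabla w\in L^\infty(\sP)^2$ by $V\subset Y\subset C^{s-2}_{per}(\overline\sP)$, we get $g(x,\nabla w)\in L^\infty(\sP)$.

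Next I would bound each such term in $L^2(\sP)$ just as in Lemma~\ref{lemme: controle a}: applying Kato's Lemma~A1 from \cite{Kato} (equivalently \eqref{eq: Kato bis}) for each fixed $x_1$ and then integrating over $x_1$,
$$\big\|g(x,\nabla w)\,\partial_{x_2}^{\ell_1}\tilde w_1\cdots\partial_{x_2}^{\ell_k}\tilde w_k\big\|_{L^2(\sP)}^2\le \operatorname{Const}\,\|g(x,\nabla w)\|_{L^\infty(\sP)}^2\,\|\nabla w\|_{L^\infty(\sP)}^{2(k-1)}\sum_{\ell=0}^{s}\|\partial_{x_2}^{\ell}\nabla w\|_{L^2(\sP)}^2,$$
the right-hand side being read as $\|g(x,\nabla w)\|_{L^\infty(\sP)}^2$ when $k=0$. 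This is exactly where $w\in V$ enters: $w\in H^{s+2}_{0,per}(\sP)$ gives $\partial_{x_2}^{\ell}\nabla w\in L^2(\sP)^2$ for every $\ell\le s+1$, in particular for every $\ell\le s$, so the right-hand side is finite — and this is needed already for the single top-order term $(\partial_{z_p}a_i)(x,\nabla w)\,\partial_{x_2}^{s}\partial_{x_p}w$ (the case $k=1$, $\ell_1=s$), which calls for $\partial_{x_2}^{s}\partial_{x_1}w\in L^2(\sP)$.

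Summing the finitely many terms then yields $\partial_{x_2}^{s}\big(a_i(x,\nabla w)\big)\in L^2(\sP)$ for $i=1,2$, i.e. $\partial_{x_2}^{s}\big(a(x,\nabla w)\big)\in L^2(\sP)^2$. I do not anticipate a genuine obstacle: the claim is a one-derivative-higher twin of Lemma~\ref{lemme: controle a} and the argument is pure bookkeeping. The only point deserving a moment of care is to confirm that differentiating $a_i(x,\nabla w)$ exactly $s$ times in $x_2$ never invokes more than $s$ derivatives of $a$, so that everything stays within the assumed $C^{s+1}$ regularity and the expansion above is justified.
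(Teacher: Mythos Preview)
Your proposal is correct and is precisely the argument the paper has in mind: the corollary is stated without proof immediately after Lemma~\ref{lemme: controle a}, the implicit justification being exactly the rerun you describe (Fa\`a di Bruno expansion plus Kato's Lemma~A1, with $s-1$ replaced by $s$, using that $w\in V=H^{s+2}_{0,per}(\sP)$ supplies $\partial_{x_2}^{\ell}\nabla w\in L^2(\sP)^2$ for $\ell\le s$ while $a\in C^{s+1}$ still covers the at most $s$ derivatives of $a$ that appear). Your care in noting that one cannot simply shift the parameter in the lemma's statement (because of the regularity assumption on $a$) is well placed, and the bookkeeping is accurate.
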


\begin{lemma}
\label{lemme: controle div a}
$\partial_{x_1}^{s-2}\operatorname{div}(a(x,\nabla w))\in L^2(\sP)$ for all $w\in Y$, and
$$
w_n\rightharpoonup w \text{ in }Y~~\Rightarrow~~
\partial_{x_1}^{s-2}\operatorname{div}(a(x,\nabla w_n))\rightharpoonup\partial_{x_1}^{s-2}\operatorname{div}(a(x,\nabla w))\text{ in }L^2(\sP)\,.
$$
\end{lemma}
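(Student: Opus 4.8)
The plan is to repeat, almost verbatim, the argument of Lemma~\ref{lemme: controle a}, the only difference being that the purely tangential operator $\partial_{x_2}^{s-1}$ is now replaced by the mixed operator $\partial_{x_1}^{s-2}\operatorname{div}$; accordingly I would use the two-dimensional interpolation estimate \eqref{eq: Kato bis} in place of the one-dimensional Lemma~A1 of \cite{Kato}. First I would expand $\partial_{x_1}^{s-2}\operatorname{div}(a(x,\nabla w))$ by iterating the chain rule recalled at the beginning of the proof of Lemma~\ref{lemme: controle a}, starting from
$$\operatorname{div}(a(x,\nabla w))=(\partial_{x_1}a_1)(x,\nabla w)+(\partial_{x_2}a_2)(x,\nabla w)+\operatorname{tr}\!\big(J_a(x,\nabla w)\operatorname{Hess}_w\big).$$
This writes $\partial_{x_1}^{s-2}\operatorname{div}(a(x,\nabla w))$ as a finite sum of terms $g(x,\nabla w)\,\partial^{\ell_1}\tilde w_1\cdots\partial^{\ell_k}\tilde w_k$ with $\tilde w_1,\dots,\tilde w_k\in\{\partial_{x_1}w,\partial_{x_2}w\}$, $k\in\{0,\dots,s-1\}$, multi-indices $\ell_1,\dots,\ell_k\in\NN^2$ with $|\ell_j|\ge 1$ and $|\ell_1|+\dots+|\ell_k|\le s-1$, and $g$ a partial derivative of $a_1$ or $a_2$ of total order $\le s-1$ (hence of class $C^2$, since $a\in C^{s+1}$).

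The bound $|\ell_1|+\dots+|\ell_k|\le s-1$ would be checked by induction on the number of applied $\partial_{x_1}$'s: the divergence contributes at most one $w$-derivative beyond the argument $\nabla w$ (the Hessian term, for which $\sum_j|\ell_j|=1$), and each subsequent $\partial_{x_1}$ raises $\sum_j|\ell_j|$ by at most one. In particular $|\ell_j|\le s-1$, so each $\partial^{\ell_j}\tilde w_j$ is a derivative of $w$ of order $\le s$, hence lies in $L^2(\sP)$; and $g(x,\nabla w)\in C^0(\overline\sP)\subset L^\infty(\sP)$, since $s\ge 5$ makes $\nabla w\in C^{s-3}(\overline\sP)$ take values in a fixed compact set on which $g$ is continuous.

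Next, for a term with $k\ge 1$ I would factor out $g(x,\nabla w)$ in $L^\infty$ and apply \eqref{eq: Kato bis} with $p=|\ell_1|+\dots+|\ell_k|\le s-1$ to the remaining product, obtaining
$$\big\|g(x,\nabla w)\,\partial^{\ell_1}\tilde w_1\cdots\partial^{\ell_k}\tilde w_k\big\|_{L^2(\sP)}\le\operatorname{Const}\,\|g(x,\nabla w)\|_{L^\infty(\sP)}\,\|\nabla w\|_{L^\infty(\sP)}^{k-1}\,\|\nabla w\|_{H^{s-1}(\sP)},$$
the case $k=0$ reducing to $\|g(x,\nabla w)\|_{L^2(\sP)}\le\|g(x,\nabla w)\|_{L^\infty(\sP)}$ since $\sP$ has finite measure. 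Summing the finitely many terms gives $\partial_{x_1}^{s-2}\operatorname{div}(a(x,\nabla w))\in L^2(\sP)$ for every $w\in Y$.

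For the weak continuity, exactly as in Lemma~\ref{lemme: controle a} the compact injection $H^s_{0,per}(\sP)\subset C^{s-2}_{per}(\overline\sP)$ turns $w_n\rightharpoonup w$ in $Y$ into $w_n\to w$ in $C^{s-2}_{per}(\overline\sP)$; hence $g(x,\nabla w_n)\to g(x,\nabla w)$ uniformly, $\partial^{\ell_j}(\tilde w_j)_n\to\partial^{\ell_j}\tilde w_j$ uniformly whenever $|\ell_j|\le s-3$, and $\partial^{\ell_j}(\tilde w_j)_n\rightharpoonup\partial^{\ell_j}\tilde w_j$ in $L^2(\sP)$ (the weak limit being the distributional one) when $|\ell_j|\in\{s-2,s-1\}$. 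The decisive point is that $\sum_j|\ell_j|\le s-1$ together with $|\ell_j|\ge 1$ forces each term to have at most one factor with $|\ell_j|\ge s-2$; absorbing the uniformly convergent factors (including $g(x,\nabla w_n)$) into a single uniformly convergent $L^\infty$-factor, and using that the product of an $L^\infty$-uniformly convergent factor with an $L^2$-bounded, $L^2$-weakly convergent one is $L^2$-weakly convergent, each term converges weakly in $L^2(\sP)$; summing gives the claim. The only delicate part of the whole argument is this combinatorial bookkeeping of the chain-rule expansion — specifically the bound $\sum_j|\ell_j|\le s-1$, which simultaneously keeps every $w$-factor square-integrable and rules out two "bad" factors in any term.
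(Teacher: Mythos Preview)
Your proposal is correct and follows the same route the paper intends: the paper's own proof simply reads ``Similar to the proof of Lemma~\ref{lemme: controle a}, with the help of \eqref{eq: dvpt}'', and you have spelled this out in detail. The one adaptation you make explicit---replacing the one-dimensional Lemma~A1 of \cite{Kato} (applied at fixed $x_1$) by the two-dimensional estimate \eqref{eq: Kato bis}, since the $\partial_{x_1}^{s-2}\operatorname{div}$ now produces mixed multi-indices---is exactly the right adjustment.
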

\begin{proof}
Similar to the proof of Lemma \ref{lemme: controle a}, with the help of \eqref{eq: dvpt}.
\end{proof}

\begin{lemma}
\label{eq: mixed derivatives}
Let the integers $m_1,m_2,m \geq 0$  be such that $0<m_1+m_2<m$.
Given $\epsilon>0$, for all $u\in C^\infty([0,1]\times \RR)$ that is $1$-periodic in $x_2$
\begin{equation*}
||\partial_{x_1}^{m_1}\partial_{x_2}^{m_2}u||_{L^2(\sP)}^2
\leq  \epsilon \Big(
||\partial_{x_1}^m u||_{L^2(\sP)}^2+||\partial_{x_2}^mu||_{L^2(\sP)}^2\Big)
+C_\epsilon||u||_{L^2(\sP)}^2
\end{equation*}
for some constant $C_\epsilon>0$ that depends on $\epsilon$.
\end{lemma}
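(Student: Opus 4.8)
The plan is to reduce this mixed-derivative interpolation inequality to the already-established estimate \eqref{eq: a Sobolev interpolation}, combined with the standard one-variable Gagliardo--Nirenberg/Ehrling trick in each variable separately. Concretely, I would first apply \eqref{eq: a Sobolev interpolation} with the same $m_1,m_2$ and $m=m_1+m_2$: this gives, for any $\eta>0$,
$$||\partial_{x_1}^{m_1}\partial_{x_2}^{m_2}u||_{L^2(\sP)}^2\leq \eta||\partial_{x_1}^{m_1+m_2}u||_{L^2(\sP)}^2+C_\eta||\partial_{x_2}^{m_1+m_2}u||_{L^2(\sP)}^2.$$
Since by hypothesis $m_1+m_2<m$, both pure derivatives $\partial_{x_1}^{m_1+m_2}u$ and $\partial_{x_2}^{m_1+m_2}u$ are of order strictly less than $m$, so it remains to interpolate each of them between order $m$ and order $0$.

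For the pure $x_2$-derivative this is immediate from the Fourier-series argument already used in the proof of \eqref{eq: a Sobolev interpolation}: writing $u=\sum_{k_2}f_{k_2}(x_1)e^{2i\pi k_2 x_2}$, one has $||\partial_{x_2}^{m_1+m_2}u||_{L^2}^2=\sum_{k_2}(2\pi k_2)^{2(m_1+m_2)}||f_{k_2}||_{L^2(0,1)}^2$, and since $(2\pi k_2)^{2(m_1+m_2)}\leq \delta (2\pi k_2)^{2m}+C_\delta$ for every $\delta>0$ (Young's inequality on the exponents, as $m_1+m_2<m$), summing gives $||\partial_{x_2}^{m_1+m_2}u||_{L^2(\sP)}^2\leq \delta||\partial_{x_2}^m u||_{L^2(\sP)}^2+C_\delta||u||_{L^2(\sP)}^2$. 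For the pure $x_1$-derivative I would use the one-dimensional interpolation inequality \eqref{eq: 1 d interpolation bis} (which holds for intermediate order $m_1+m_2<m$) applied to each Fourier coefficient $f_{k_2}$ with a fixed small parameter, then integrate/sum over $k_2$; this yields $||\partial_{x_1}^{m_1+m_2}u||_{L^2(\sP)}^2\leq \delta'||\partial_{x_1}^m u||_{L^2(\sP)}^2+C_{\delta'}||u||_{L^2(\sP)}^2$ for every $\delta'>0$. Note that the case $m_2=0$ (so $m_1=m_1+m_2<m$) is covered directly by this last step, and $m_1=0$ by the $x_2$ step, so the degenerate cases where \eqref{eq: a Sobolev interpolation} is not directly applicable cause no trouble.

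Finally I would chain the three estimates: insert the bounds for $||\partial_{x_1}^{m_1+m_2}u||_{L^2(\sP)}^2$ and $||\partial_{x_2}^{m_1+m_2}u||_{L^2(\sP)}^2$ into the output of \eqref{eq: a Sobolev interpolation}, and choose the parameters $\eta,\delta,\delta'$ successively so that the total coefficient in front of $||\partial_{x_1}^m u||_{L^2(\sP)}^2+||\partial_{x_2}^m u||_{L^2(\sP)}^2$ is at most the prescribed $\epsilon$; all remaining terms are absorbed into $C_\epsilon||u||_{L^2(\sP)}^2$. I do not anticipate a genuine obstacle here: the only mild care needed is bookkeeping of the order of quantifiers (the constant $C_\eta$ in \eqref{eq: a Sobolev interpolation} must be fixed \emph{before} choosing $\delta$, which is fine since $\delta$ only multiplies $C_\eta$-independent quantities), and handling the endpoint cases $m_1=0$ or $m_2=0$ as noted above. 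One could alternatively phrase the whole thing purely via the Fourier expansion and the one-dimensional multiplicative Gagliardo--Nirenberg inequality $||f^{(j)}||_{L^2}\leq C||f^{(m)}||_{L^2}^{j/m}||f||_{L^2}^{1-j/m}$ applied to $f_{k_2}$, combined with $|k_2|^{m_2}$ weights and Young's inequality with three factors, but the reduction to \eqref{eq: a Sobolev interpolation} keeps the argument shortest.
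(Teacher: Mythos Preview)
Your proposal is correct and follows essentially the same route as the paper: reduce to pure derivatives of order $m_1+m_2$ via \eqref{eq: a Sobolev interpolation}, then interpolate $\partial_{x_1}^{m_1+m_2}u$ using the one-dimensional estimate \eqref{eq: 1 d interpolation bis} and $\partial_{x_2}^{m_1+m_2}u$ using the Fourier-series interpolation in $x_2$. The paper simply fixes $\eta=1$ rather than keeping it as a free parameter, and is slightly less explicit than you are about the endpoint cases $m_1=0$ or $m_2=0$ and the order of choosing constants, but the argument is the same.
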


\begin{proof}
Given two integers $0<\sigma<s$, the interpolation inequality
\begin{equation}
\label{eq: interpolation in x_2}
||\partial_{x_2}^{\sigma}f||_{L^2(\sP)}^2\leq ||f||_{L^2}^{2(s-\sigma)/s}||\partial_{x_2}^{s}f||_{L^2}^{2\sigma/s}
\end{equation}
holds for all $f\in H_{0,per}^s(\sP)$.
Indeed it suffices to interpolate $|2\pi k_2|^{2\sigma}$ with $0<\sigma<s$ between the two functions $1$ and $|2\pi k_2|^{2s}$, and to use Fourier's series in $x_2$, which gives that
$|2\pi k_2|^{2\sigma}= 1^{(s-\sigma)/s}(|2\pi k_2|^{2s})^{\sigma/s}$ and \eqref{eq: interpolation in x_2}

Now, by \eqref{eq: a Sobolev interpolation} where $\epsilon=1$ and $m$ is replaced by $m_1+m_2$,
there is some constant $C>0$ such that
$$||\partial_{x_1}^{m_1}\partial_{x_2}^{m_2}u||_{L^2(\sP)}^2
\leq C\Big(|| u||_{L^2(\sP)}^2+ ||\partial_{x_1}^{m_1+m_2} u||_{L^2(\sP)}^2+||\partial_{x_2}^{m_1+m_2}u||_{L^2(\sP)}^2\Big)$$
and the statement follows from \eqref{eq: 1 d interpolation bis}(with $\delta=\frac{\epsilon}{K_\epsilon}$) and \eqref{eq: interpolation in x_2}. 
\end{proof}

\section{The hydrodynamic problem} \label{section: hydro}

\subsection{Preliminary remarks}
Let $H$ be like in Theorem \ref{thm: hydrodynamics} and
let $(f,g)$ in equation \eqref{eq: deux} be replaced by $(f+F,g+G)$, where both pairs $(f,g)$ and $(f+F,g+G)$ are admissible. Equation \eqref{eq: deux} can be written for the unknown pair $(F,G)$ as 
\begin{equation}\label{eq: deux bis}
\left(\begin{array}{c}
-\Div(\nabla (g+G) \times (\nabla (f+F)\times \nabla(g+G)))+
\partial_f H(f+F,g+G)
\\
-\Div((\nabla (f+F)\times \nabla (g+G))\times \nabla (f+F))
+\partial_g H(f+F,g+G)
\end{array}\right)=0,
\end{equation}
where
$(F,G)\in C^2(\overline D)$ is $(P_y,P_z)$-periodic in $y$ and $z$ and vanishes on $\partial D$.
If
$\widetilde {\operatorname{div}}$ on a vector field $(x,y,z)\rightarrow W\in \mathbb R^6$ denotes
$\Big(\operatorname{div}(W_1,W_2,W_3),\operatorname{div}(W_4,W_5,W_6)\Big)\in \mathbb R^2$,
then the problem can be written
$$-\widetilde{\operatorname{div}}\Big(a(\nabla (f+F),\nabla (g+G))\Big)+(\nabla H)(f+F,g+G)=0,$$
where $a=(a_1,\ldots,a_6)=(a_{1,2,3},a_{4,5,6})\in \mathbb R^6$ with
$$a_{1,2,3}(w_1,\ldots,w_6)=(w_4,w_5,w_6) \times ((w_1,w_2,w_3)\times (w_4,w_5,w_6))
$$$$
\stackrel{\text{Lagrange's formula}}=||(w_4,w_5,w_6)||^2(w_1,w_2,w_3)-(w_1w_4+w_2w_5+w_3w_6)(w_4,w_5,w_6)
$$
\begin{equation}
\label{eq: developed formula}
=(w_5^2w_1+w_6^2w_1-w_2w_4w_5-w_3w_4w_6,w_4^2w_2+w_6^2w_2-w_1w_4w_5-w_3w_5w_6,w_4^2w_3+w_5^2w_3-w_1w_4w_6-w_2w_5w_6 )
\end{equation}
and
$$a_{4,5,6}(w_1,\ldots,w_6)=((w_1,w_2,w_3)\times (w_4,w_5,w_6))\times (w_1,w_2,w_3)
$$$$
=(w_1,w_2,w_3) \times ((w_4,w_5,w_6)\times (w_1,w_2,w_3)).
$$
In weak form, this can be written as follows:
for all pairs $(\delta F,\delta G)\in H^5_{0,per}(\sP)$
$$\int_{\sP} a(\nabla (f+F),\nabla (g+G))\cdot (\nabla \delta F,\nabla \delta G)\,dx\,dy\,dz
+\int_{\sP}(\nabla H)(f+F,g+G)\cdot(\delta F,\delta G)\,dx\,dy\,dz=0.$$
The directional derivative (with respect to $(F,G)$) at $(F,G)=(0,0)$ in the direction $(F,G)$ of the left-hand side of equation \eqref{eq: deux bis} can therefore also be written in a weak form
$B_{(f,g)}((F,G),(\delta F,\delta G))$ for all such pairs $(\delta F,\delta G)$,
where the bilinear form $B_{(f,g)}$ is given by
\begin{align*}
&
B_{(f,g)}((F,G),(\delta F,\delta G))
=\int_{\sP}\Big\{ 
\sum_{i,j=1}^6(\nabla \delta F,\nabla \delta G)((\partial_{w_j}a_i)(\nabla f,\nabla g))(\nabla F,\nabla  G)^T
\\&\qquad\qquad+
\partial_f^2 H(f,g)F\delta F+\partial_f \partial_g H(f,g) (F\delta G+G\delta F) + \partial_g^2 H(f,g) G \delta G
\Big\}\,dx\,dy\,dz
\end{align*}
with $(\nabla F,\nabla G)$ and $(\nabla \delta F,\nabla \delta G)$ written as row vectors, the index $i$ being for the rows and the index $j$ for the columns of the matrix $(\partial_{w_j}a_i(\nabla f,\nabla g))$.
Observe that \eqref{eq: deux bis} can be seen as the equation 
for critical points (in the sense that all directional derivatives at $(F,G)$  vanish) of the integral functional
$$
(F,G)\rightarrow\int_{\sP}\Big\{ \frac12|\nabla( f+F)\times \nabla (g+G)|^2+H(f+F,g+G)\Big\}\,dx\,dy\,dz,
$$
and the bilinear form $B$ is related to the second Fr\'echet derivative of the integral functional at $(F,G)=(0,0)$.
This intuitively explains why the bilinear form $B_{(f,g)}$ is symmetric, in analogy with \eqref{eq: symmetry}, that is, the Jacobi matrix $J_a(w_1,\ldots,w_6)$ of $a$ at $(w_1,\ldots,w_6)$ is symmetric:
$$\partial_{w_j}a_i=\partial_{w_i}a_j\,,~~i,j\in\{1,\ldots,6\}$$
(and this can be checked explicitly).
To sum up, the hydrodynamic problem is of a form analogous to \eqref{eq: the toy equation}:
the aim is to find a couple of functions $(F,G)\in H^5_{0,per}(\sP)$ such that
\begin{equation}
\label{eq: deux ter}
-\widetilde{\operatorname{div}}\Big(a(\nabla (\widetilde f_0+F),\nabla (\widetilde g_0+G))\Big)+(\nabla H)(\widetilde f_0+F,\widetilde g_0+G)=0.
\end{equation}

The next proposition will lead to a lower estimate analogous to \eqref{eq: consequence of semi-coer} on the integral \eqref{eq: for Delta4 F G} below.
Its proof is similar to an estimate in the first step of the proof of Theorem 2.1 in \cite{BuWa}.

\begin{prop}
\label{prop: simple inequality}
Assume that $(f,g)\in C^1(\overline D)$ is such that 
$$\nabla_{(y,z)} f=(\partial_y f,\partial_z f)\in C(\overline D,\mathbb R^2)~\text{ and }~\nabla_{(y,z)} g$$ 
are $(P_y,P_z)$-periodic in $y$ and $z$,  and 
$|\nabla_{(y,z)} f|^2+|\nabla_{(x,y)} g|^2$
never vanishes (this is the case if
the first component $v_1$ of
$v = \nabla f \times \nabla g$
never vanishes).
Then for all $(F,G)\in L^2_{loc}(D) $  that is $(P_y,P_z)$-periodic in $y$ and $z$, we have
$$\int_{\sP} \Big( (\partial_{w_1}a_1)(\nabla f,\nabla g)F^2+(\partial_{w_4}a_1)FG+(\partial_{w_1}a_4)FG+(\partial_{w_4}a_4)G^2\Big)\, dx\, dy\, dz$$
$$
\stackrel{\eqref{eq: developed formula}}=\int_{\sP}\Big(|\nabla_{(y,z)} g|^2 F^2+|\nabla_{(y,z)} f|^2G^2-2(\nabla_{(y,z)} f\innpr \nabla_{(y,z)} g)FG\Big)\, dx\, dy\, dz 
$$$$
\geq\int_{\sP}\frac{v_1^2}{|\nabla_{(y,z)} f|^2+|\nabla_{(y,z)} g|^2}\Big(F^2+G^2\Big)\, dx\, dy\, dz
$$
with
$$v_1^2=(\partial_yf\partial_zg-\partial_z f\partial_yg)^2
=|\nabla_{(y,z)} f|^2|\nabla_{(y,z)} g|^2-|\nabla_{(y,z)} f\cdot \nabla_{(y,z)} g|^2\,.$$
\end{prop}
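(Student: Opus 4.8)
The plan is to reduce everything to a pointwise quadratic–form inequality in the two real variables $(F,G)$ and then integrate over $\sP$.

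First I would check the stated equality. Writing $(w_1,w_2,w_3)=\nabla f$, $(w_4,w_5,w_6)=\nabla g$ and differentiating the explicit polynomial \eqref{eq: developed formula} for $a_1$ (and the companion expression for $a_4$, obtained by exchanging $(w_1,w_2,w_3)$ with $(w_4,w_5,w_6)$) gives $\partial_{w_1}a_1=w_5^2+w_6^2$, $\partial_{w_4}a_1=-w_2w_5-w_3w_6$ and $\partial_{w_4}a_4=w_2^2+w_3^2$; evaluating at $(\nabla f,\nabla g)$ and using the symmetry $\partial_{w_1}a_4=\partial_{w_4}a_1$, the left-hand integrand equals
$$Q(F,G):=|\nabla_{(y,z)} g|^2 F^2-2(\nabla_{(y,z)} f\innpr \nabla_{(y,z)} g)FG+|\nabla_{(y,z)} f|^2 G^2=\big|\,F\,\nabla_{(y,z)} g-G\,\nabla_{(y,z)} f\,\big|^2,$$
which is the first claimed identity. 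Note that none of these coefficients involves $\partial_x f$ or $\partial_x g$, so they are $(P_y,P_z)$-periodic and, since $(f,g)\in C^1(\overline D)$, continuous and bounded.

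Next, the pointwise inequality $Q(F,G)\geq \frac{v_1^2}{T}(F^2+G^2)$, with $T:=|\nabla_{(y,z)} f|^2+|\nabla_{(y,z)} g|^2$, I would obtain from the symmetric $2\times 2$ matrix
$$M=\begin{pmatrix}|\nabla_{(y,z)} g|^2 & -\nabla_{(y,z)} f\innpr \nabla_{(y,z)} g\\[2pt] -\nabla_{(y,z)} f\innpr \nabla_{(y,z)} g & |\nabla_{(y,z)} f|^2\end{pmatrix}$$
representing $Q$. Its trace is $T$ and, by the Lagrange identity recalled at the end of the statement, its determinant is $|\nabla_{(y,z)} f|^2|\nabla_{(y,z)} g|^2-|\nabla_{(y,z)} f\innpr\nabla_{(y,z)} g|^2=v_1^2\geq 0$, so $M$ is positive semidefinite and its eigenvalues $0\leq \lambda_{\min}\leq\lambda_{\max}$ satisfy $\lambda_{\min}\lambda_{\max}=v_1^2$ and $\lambda_{\max}\leq\lambda_{\min}+\lambda_{\max}=T$. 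Hence $\lambda_{\min}=v_1^2/\lambda_{\max}\geq v_1^2/T$ (trivially true as $\lambda_{\min}\geq 0$ when $v_1=0$), so $Q(F,G)\geq\lambda_{\min}(F^2+G^2)\geq\frac{v_1^2}{T}(F^2+G^2)$ everywhere.

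Finally I would integrate over $\sP$. Since $T$ is continuous, $(P_y,P_z)$-periodic and never vanishes — which holds in particular when $v_1$ never vanishes, as $v_1^2\leq T^2/4$ — it is bounded below by a positive constant, so $v_1^2/T\leq T/4$ is bounded and, for $(F,G)\in L^2_{loc}(D)$ periodic in $y,z$, both sides are finite integrals over $\sP$; integrating the pointwise bound yields the conclusion. There is no serious obstacle here: the only mildly non-routine ingredient is the elementary fact that $\lambda_{\min}(M)\geq\det M/\operatorname{tr}M$ for a positive semidefinite $2\times 2$ matrix — equivalently, one can multiply the desired inequality by $T$ and verify that $T\,Q(F,G)-v_1^2(F^2+G^2)$ is a quadratic form in $(F,G)$ with nonnegative leading coefficient and discriminant $-4v_1^4\leq 0$, hence $\geq 0$.
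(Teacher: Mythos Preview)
Your proof is correct and follows essentially the same approach as the paper: both reduce to the pointwise lower bound $\lambda_{\min}(M)\geq v_1^2/T$ for the $2\times 2$ matrix $M$ and then integrate. The paper computes the eigenvalues $\tfrac12\big(T\pm\sqrt{T^2-4v_1^2}\big)$ explicitly and rationalizes, whereas you obtain the same bound more directly via $\lambda_{\min}=\det M/\lambda_{\max}\geq \det M/\operatorname{tr} M$; the two arguments are equivalent.
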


\begin{proof}
$$\int_{\sP}\Big(|\nabla_{(y,z)} g|^2 F^2+|\nabla_{(y,z)} f|^2G^2-2(\nabla_{(y,z)} f\cdot \nabla_{(y,z)} g)FG\Big)\, dx\, dy\, dz 
$$
$$\geq\int_{\sP}
\frac{|\nabla_{(y,z)} f|^2+|\nabla_{(y,z)} g|^2-\sqrt{(|\nabla_{(y,z)} f|^2+|\nabla_{(y,z)} g|^2)^2-4v_1^2}}{2} \left(F^2+G^2\right)\,dx\,dy\,dz$$
because the eigenvalues of 
$$\left(\begin{array}{c c}|\nabla_{(y,z)} g|^2&- \nabla_{(y,z)} f\innpr\nabla_{(y,z)} g\\
- \nabla_{(y,z)} f\innpr\nabla_{(y,z)} g&|\nabla_{(y,z)} f|^2\end{array}\right)$$
are $\frac 1 2 \left(
|\nabla_{(y,z)} f|^2+|\nabla_{(y,z)} g|^2\pm \sqrt{(|\nabla_{(y,z)} f|^2+|\nabla_{(y,z)} g|^2)^2-4v_1^2}
\right).$
Thus
$$\dots
\geq\int_{\sP}
\frac{2v_1^2}{|\nabla_{(y,z)} f|^2+|\nabla_{(y,z)} g|^2+\sqrt{(|\nabla_{(y,z)} f|^2+|\nabla_{(y,z)} g|^2)^2-4v_1^2}} \left(F^2+G^2\right)\,dx\,dy\,dz
$$
$$
\geq\int_{\sP}
\frac{v_1^2}{|\nabla_{(y,z)} f|^2+|\nabla_{(y,z)} g|^2} \left(F^2+G^2\right)\,dx\,dy\,dz.
$$
\end{proof}

An important feature is the possibility of bounding from below the quadratic functional 
\\$\frac 1 2 B_{(f,g)}((F,G),(F,G))$.
Let us recall the following inequality in \cite{BuWa} (its statement is here slightly more complicated as a simplifying assumption has not been introduced).
It plays an analogous role as the second hypothesis in equation \eqref{eq: hyp on a}, see also \eqref{eq: equation coercivite}. However, its proof involves an integral similar to an integral of a determinant (see in \cite{BuWa} the second step of the proof of Theorem 2.1). 

\begin{thm}
\label{thm: quadratic part}
Assume that $\nabla \overline f$ and $\nabla \overline g$ are constant and
that the first component of $\overline v$ does not vanish.
Let $H\in C^2(\RR^2)$.
For $(f,g)\in C^3(\overline D)$ and $(F,G)\in H_{loc}^1(D)$ such that $\nabla f$, $\nabla g$, $F$, $G$ and $H''(f,g)$ are $(P_y,P_z)$-periodic in $y$ and $z$,  and such that $(F,G)=0$ on $\partial D$ in the usual weak sense of traces, we have
\begin{equation*}
\begin{aligned}
& B_{(f,g)}((F,G), (F,G))
\\&\geq\frac{1}{2|\nabla \overline f|^2+2|\nabla \overline g|^2}
\int_{\sP}\Big\{ 
\frac 1 {16}(v\innpr \nabla F)^2+\frac 1{16}(v\innpr \nabla G)^2+(1-O(\|v'\|_{C(\overline \sP)}))\frac {\pi^2\min_{\overline \sP}  v_1^2} {16L^2}(F^2+G^2)\Big\}\,dx\,dy\,dz 
\\&~~~~~~~~~~~ +\int_{\sP}\Big\{ 
\partial_f^2 H(f,g)F^2+2\partial_f \partial_g H(f,g)FG+ \partial_g^2 H(f,g) G^2
\Big\}\,dx\,dy\,dz
\end{aligned}
\end{equation*}
if $(\nabla f,\nabla g)$ is in some small neighborhood of
$(\nabla \overline f,\nabla \overline g)$ in  $C^2(\overline \sP)$
(independent of $H$). Here $v'$ denotes the Jacobian matrix of $v$.
	The notation $O(||v'||_{C(\overline \sP)})$ is for a continuous function whose $L^\infty$-norm is bounded by a constant times $||v'||_{C(\overline \sP)}$ (the constant being independent of $(F,G)$, $(f,g)$ and $H$).
\end{thm}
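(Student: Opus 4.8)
\textbf{Plan of proof for Theorem \ref{thm: quadratic part}.}
The plan is to reduce the estimate to the one already available for the principal ``quadratic determinant'' part and an explicit lower bound coming from Proposition \ref{prop: simple inequality}, handling the perturbation terms via the smallness of $\nabla f,\nabla g$ around the constant state. First I would write out $B_{(f,g)}((F,G),(F,G))$ explicitly using the formula for $a=(a_{1,2,3},a_{4,5,6})$ in \eqref{eq: developed formula}; the quadratic form in $(\nabla F,\nabla G)$ is $\sum_{i,j}(\partial_{w_j}a_i)(\nabla f,\nabla g)\,(\nabla F,\nabla G)_i(\nabla F,\nabla G)_j$, and I would split the $6\times 6$ Jacobian $J_a(\nabla f,\nabla g)$ into (i) its value $J_a(\nabla\overline f,\nabla\overline g)$ at the constant reference state, plus (ii) the remainder $J_a(\nabla f,\nabla g)-J_a(\nabla\overline f,\nabla\overline g)$, which is $O(\|(\nabla f,\nabla g)-(\nabla\overline f,\nabla\overline g)\|_{C^0})$ pointwise, hence controllable by $\|v'\|_{C(\overline\sP)}$ after noting $v=\nabla f\times\nabla g$ depends affinely on each gradient.

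The core is the reference-state piece $\int_\sP (\nabla F,\nabla G)\,J_a(\nabla\overline f,\nabla\overline g)\,(\nabla F,\nabla G)^T$. Using Lagrange's formula one computes, for constant $\overline v=\nabla\overline f\times\nabla\overline g$, that $a_{1,2,3}(w)=|w_{456}|^2 w_{123}-(w_{123}\innpr w_{456})w_{456}$, and the bilinear form obtained by polarizing with $(\nabla F,\nabla G)$ is, after algebra, essentially the determinant-type expression $|\nabla\overline g|^2|\nabla F|^2+|\nabla\overline f|^2|\nabla G|^2-2(\nabla\overline f\innpr\nabla\overline g)(\nabla F\innpr\nabla G)$ plus mixed terms; this is precisely the structure for which \cite{BuWa} (second step of the proof of Theorem 2.1) produces the lower bound $\frac{1}{2|\nabla\overline f|^2+2|\nabla\overline g|^2}\int_\sP\{\frac{1}{16}(\overline v\innpr\nabla F)^2+\frac{1}{16}(\overline v\innpr\nabla G)^2+\cdots\}$, where the missing positive control on $F^2+G^2$ comes from combining the above with Proposition \ref{prop: simple inequality} applied in the $(y,z)$-variables (using $\overline v_1\neq0$) together with the Poincaré inequality in $x_1\in(0,L)$ with Dirichlet conditions, which yields the factor $\pi^2\min v_1^2/(16L^2)$. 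I would then absorb the replacement of $\overline v$ by $v$ and of $J_a$ at the reference state by $J_a(\nabla f,\nabla g)$ into the $(1-O(\|v'\|_{C(\overline\sP)}))$ factor, using that $\min_{\overline\sP}v_1^2>0$ is stable under small $C^2$ perturbations. Finally the Hessian-of-$H$ terms $\partial_f^2H\,F^2+2\partial_f\partial_g H\,FG+\partial_g^2H\,G^2$ pass through untouched, since $B_{(f,g)}$ contains them additively and no integration by parts touches them.

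I expect the main obstacle to be the bookkeeping in the reference-state computation: verifying that the polarized bilinear form of $a$ at the constant state genuinely reduces, modulo lower-order symmetric terms, to the positive-definite-in-$(\overline v\innpr\nabla F,\overline v\innpr\nabla G)$ structure of Proposition \ref{prop: simple inequality}, and then tracking how a $\frac{1}{2}$-type loss (needed to keep the coefficient $\frac{1}{16}$ rather than something sharper) lets one dump \emph{all} the perturbative terms — both the $J_a$-remainder and the cross terms mixing $(\overline v\innpr\nabla F)$ with $F$ itself — into the single $O(\|v'\|_{C(\overline\sP)})$ correction without spoiling positivity. This is exactly the delicate ``integral of a determinant'' estimate flagged in the remark before the theorem; the rest (Poincaré, perturbation of a nonvanishing continuous function, additivity of the $H''$ terms) is routine. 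Since the statement explicitly says the argument is as in \cite{BuWa}, I would organize the write-up to isolate that one lemma-like computation and cite \cite{BuWa} for its proof, presenting here only the reduction steps and the use of Proposition \ref{prop: simple inequality} and the Poincaré inequality.
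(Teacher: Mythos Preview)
The paper does not prove Theorem \ref{thm: quadratic part}: it is recalled from \cite{BuWa} with only the hint that the key step is an ``integral of a determinant'' (second step of Theorem 2.1 there). Your intention to cite \cite{BuWa} for that computation therefore matches what the paper does. However, your outlined reduction---splitting $J_a(\nabla f,\nabla g)=J_a(\nabla\overline f,\nabla\overline g)+\text{remainder}$ and claiming the remainder is ``controllable by $\|v'\|_{C(\overline\sP)}$''---contains a gap: the remainder is of size $\|(\nabla f,\nabla g)-(\nabla\overline f,\nabla\overline g)\|_{C^0}$, which is \emph{not} dominated by $\|v'\|_{C}$ in general (take $(\nabla f,\nabla g)$ constant but different from $(\nabla\overline f,\nabla\overline g)$: the remainder is nonzero while $v'\equiv 0$). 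Moreover, the stated inequality carries the $O(\|v'\|)$ correction \emph{only} on the $F^2+G^2$ term, with no correction on $(v\cdot\nabla F)^2$; a reference-state split would contaminate all terms and cannot yield this precise form.

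The argument in \cite{BuWa} works directly at $(f,g)$. Writing out the second variation of $\int\frac12|\nabla f\times\nabla g|^2$, the quadratic part of $B$ (without $H''$) equals $\int_\sP|\delta v|^2+2\int_\sP v\cdot(\nabla F\times\nabla G)$ with $\delta v=\nabla F\times\nabla g+\nabla f\times\nabla G$. The second integral is the ``determinant integral'': since $\nabla F\times\nabla G=\nabla\times(F\nabla G)$ and $F=0$ on $\{0,L\}\times\RR^2$, an integration by parts turns it into $\int_\sP(\nabla\times v)\cdot(F\nabla G)$, which is $O(\|v'\|_{C})\|F\|_{L^2}\|\nabla G\|_{L^2}$---\emph{this} is the origin of the $O(\|v'\|)$ correction, not a comparison with the reference state. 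The positive term $\int|\delta v|^2$ is then bounded below via the identities $(\delta v\cdot\nabla f)^2=(v\cdot\nabla F)^2$ and $(\delta v\cdot\nabla g)^2=(v\cdot\nabla G)^2$ together with Proposition \ref{prop: simple inequality} (applied to $(\partial_x F,\partial_x G)$) and Poincar\'e in $x\in(0,L)$; the factor $1/16$ records the half-losses in this splitting. Your list of ingredients (Proposition \ref{prop: simple inequality}, Poincar\'e, additivity of the $H''$ terms) is right, but the reference-state split should be replaced by this direct decomposition.
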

\begin{rem}
The assumption $(f,g)\in C^3(\overline D)$ will be ensured by the property $(f,g)\in H^5_{loc}(D)$.
\end{rem}

\subsection{Some estimates from below}
In order to apply Kato's abstract Theorem \ref{thm: Kato},  \eqref{eq: Kato coercivity} will be checked with the help of intermediate estimates from below proved in this subsection.
Let us consider a fixed pair $(\widetilde f_0,\widetilde g_0)\in H^7_{loc}(D)$ that is $(P_y,P_z)$-periodic in $y$ and $z$, on which we shall add assumptions as required in the statement of Theorem  \ref{thm: hydrodynamics}.
Let $r\in]0,1]$ be as small as needed,  consider the closed ball $\overline B(0,r)\subset H_{0,per}^5(\sP)$ and assume that $(F,G)$ is on the boundary $\partial B(0,r)$.
Thanks to \eqref{eq: estimates in r square}, we can also assume that
\begin{equation}\label{eq: two estimates}
||(\widetilde f_0'',\widetilde g_0'')||_{H^5(\sP)}\leq \xi r\,.
\end{equation}
Remember that the indices $0,per$ is for the periodicity conditions in $y$ and $z$ on $(F,G)$, and the Dirichlet condition $F=G=0$ for $x\in\{0,L\}$.

\subsubsection{First estimate from below}
Let us note $(x,y,z)=(x_1,x_2,x_3)$.
Arguing as in Subsection \ref{section: proof of crucial inequality} to prove \eqref{eq: conclusion},
we get (see \eqref{eq: deux bis} and \eqref{eq: deux ter})
$$\int_{\sP}\left(\frac {\partial^5}{\partial y^5}F\right)
\frac{\partial^5}{\partial y^5}\left(-\Div(\nabla (\widetilde g_0+G) \times (\nabla (\widetilde f_0+F)\times \nabla (\widetilde g_0+G)))+
\partial_f H(\widetilde f_0+F,\widetilde g_0+G)\right)
\,dx\,dy\,dz$$
$$
+\int_{\sP}\left(\frac {\partial^5}{\partial y^5}G\right)
\frac{\partial^5}{\partial y^5}\left(-\Div((\nabla (\widetilde f_0+F)\times \nabla (\widetilde g_0+G))\times \nabla (\widetilde f_0+F)))
+\partial_gH(\widetilde f_0+F,\widetilde g_0+G)\right)
\,dx\,dy\,dz$$
$$=\int_{\sP}\sum_{i=1}^3\left(\frac {\partial^5}{\partial y^5}\partial_{x_i}F\right)
\frac{\partial^5}{\partial y^5}\left(a_i(\nabla (\widetilde f_0+F),\nabla (\widetilde g_0 + G))\right)
\,dx\,dy\,dz$$
$$+\int_{\sP}\sum_{i=1}^3\left(\frac {\partial^5}{\partial y^5}\partial_{x_i}G\right)
\frac{\partial^5}{\partial y^5}\left(a_{3+i}(\nabla (\widetilde f_0+F),\nabla (\widetilde g_0 + G))\right)
\,dx\,dy\,dz\stackrel{\eqref{eq: estimates in r square}}{-}\operatorname{Const} \xi r^2$$
$$\stackrel{\text{as in Subsection \ref{section: proof of crucial inequality}}} \geq \,  B_{(\widetilde f_0+F,\widetilde g_0 +G)}((\partial_y^5F,\partial_y^5G),(\partial_y^5F,\partial_y^5G))
-\operatorname{Const} (r^3+\xi r^2 ).$$
Note that the analogue of $I_2$+$I_3$ is, with $s=5$, $(x,y,z)=(x_1,x_2,x_3)$ and $E=(\nabla F,\nabla G)\in \mathbb R^6$,
$$
\int_{\sP}s \sum_{i=1}^{6}\sum_{q=1}^6\sum_{p=1}^6(\partial_{w_p}\partial_{w_q}a_i)(\nabla (\widetilde f_0+F),\nabla (\widetilde g_0 + G))\cdot\Big(\partial_{x_2}^{s-1}E_p\Big)\Big(\partial_{x_2}E_q\Big)\Big(\partial_{x_2}^sE_i\Big)\, dx_1 dx_2 dx_3
$$
where
$\partial_{w_p}\partial_{w_q}a_i=\partial_{w_i}\partial_{w_q}a_p$,
for $p,q,i\in\{1,\ldots,6\}$.
By Theorem \ref{thm: quadratic part},
$$\ldots\geq\frac 1 {2|\nabla \overline f|^2+2|\nabla \overline g|^2} \int_{\sP}\frac {\pi^2\min_{\overline \sP}  \widetilde v_{01}^2} {32L^2}((\partial_y^5F)^2+(\partial_y^5G)^2) \, dx\, dy\, dz 
-\operatorname{Const} (r^3+\xi r^2)$$
\begin{equation}\label{eq: conclusion bis}
\geq K_1||\partial_y^5(F,G)||_{L^2(\sP)}^2-\operatorname{Const} (r^3+\xi r^2),
\end{equation}
where $\widetilde v_{01}$ is the first component of $\widetilde v_0=\nabla \widetilde f_0\times\nabla \widetilde g_0$ and $K_1>0$ is some constant.
Theorem \ref{thm: quadratic part} can be applied because,
for small enough $r\in (0,1]$, $||(\nabla \widetilde f_0+\nabla F,\nabla \widetilde g_0+\nabla G)-(\nabla \overline f,\nabla \overline g)||_{H^4(\sP)}$ is as small as needed (see \eqref{eq: estimates in r square}) and $||\big(\nabla(\widetilde f_0+F)\times\nabla(\widetilde g_0+G)\big)'||_{L^\infty(\sP)}$ is as small as needed too because of \eqref{eq: two estimates}.

Terms of the type  $\operatorname{Const} r ||(F,G)||_{H^{s-1}(\sP)}$ do not appear explicitly (see the end of Paragraph \ref{subsection: an estimate on an integral}), as here they are included in those of the type $\operatorname{Const} \xi r^2$.

\subsubsection{Second estimate from below}
In the same way,
$$\int_{\sP}\left(\frac {\partial^5}{\partial z^5}F\right)
\frac{\partial^5}{\partial z^5}\left(-\Div(\nabla (\widetilde g_0+G) \times (\nabla (\widetilde f_0+F)\times \nabla (\widetilde g_0+G)))+
\partial_f H(\widetilde f_0+F,\widetilde g_0+G)\right)
\,dx\,dy\,dz$$
$$
+\int_{\sP}\left(\frac {\partial^5}{\partial z^5}G\right)
\frac{\partial^5}{\partial z^5}\left(-\Div((\nabla (\widetilde f_0+F)\times \nabla (\widetilde g_0+G))\times \nabla (\widetilde f_0+F)))
+\partial_gH(\widetilde f_0+F,\widetilde g_0+G)\right)
\,dx\,dy\,dz$$
\begin{equation}
\label{eq: conclusion ter}
\geq K_1||\partial_z^5(F,G)||_{L^2(\sP)}^2
-\operatorname{Const}(r^3+\xi r^2).
\end{equation}

\subsubsection{Third estimate from below}
To get an inequality analogous to \eqref{eq: consequence of semi-coer},
we now study
$$
\int_\sP
-\frac{\partial^3}{\partial x^3}\Big(-\Div(\nabla (\widetilde g_0+G) \times (\nabla (\widetilde f_0+F)\times \nabla(\widetilde g_0+G)))+
\partial_f H(\widetilde f_0+F,\widetilde g_0+G)\Big)\cdot \frac{\partial^5 F}{\partial x^5} \,dx\,dy\,dz$$
\begin{equation}
\label{eq: for Delta4 F G}
+\int_\sP
-\frac{\partial^3}{\partial x^3}\Big(-\Div((\nabla (\widetilde f_0+F)\times \nabla (\widetilde g_0+G))\times \nabla (\widetilde f_0+F))
+\partial_g H(\widetilde f_0+F,\widetilde g_0+G)\Big)\cdot \frac{\partial^5G}{\partial x^5}\,dx\,dy\,dz.
\end{equation}
Thanks to \eqref{eq: estimates in r square} and $(F,G)\in \partial B(0,r)$, the terms containing partial derivatives of $H$ can be bounded below by $-\operatorname{Const} \xi r^2$. 
Also the terms in which $\widetilde f_0$ or $\tilde g_0$ is differentiated at least twice can be bounded below by $-\operatorname{Const} \xi r^2$ (by \eqref{eq: two estimates}).
Terms in which the factors $F$ or $G$ (or both) appear at least three times can too be bounded below by $-\operatorname{Const} r^3$.
Hence remain terms in which $\widetilde f_0$ and $\widetilde g_0$ are differentiated exactly once, the factors $F$ or $G$ (or both) appearing exactly twice and five times differentiated.
Furthermore terms that contain both factors $F$ or $G$ partially differentiated five times (in $x$, $y$ and/or $z$), but one of the two factors being at least differentiated once  in $y$ or $z$, can be bounded below by
$$-\operatorname{Const}\left(\sum_{m_1+m_2+m_3= 5,\,m_1\leq 4}\left\|\frac{\partial^{m_1+m_2+m_3}}{\partial x^{m_1}\partial y^{m_2}\partial z^{m_3}}(F,G)\right\|_{L^2(\sP)}\right)\left\|\frac{\partial^5}{\partial x^5}(F,G)\right\|_{L^2(\sP)}\,,$$
where the constant can change from line to line.
Hence
$$\eqref{eq: for Delta4 F G}\geq
\int_\sP\frac{\partial^4}{\partial x^4}(\nabla (\widetilde g_0+G) \times (\nabla (\widetilde f_0+F)\times \nabla(\widetilde g_0+G)))\cdot \nabla \frac{\partial^4 F}{\partial x^4} \,dx\,dy\,dz
$$$$
+\int_\sP\frac{\partial^4}{\partial x^4}((\nabla (\widetilde f_0+F)\times \nabla (\widetilde g_0+G))\times \nabla (\widetilde f_0+F))\cdot\nabla \frac{\partial^4G}{\partial x^4}\,dx\,dy\,dz
$$$$
-\operatorname{Const}\underbrace{\left(
\left(\sum_{m_1+m_2+m_3= 5,\,m_1\leq 4}\left\|\frac{\partial^{m_1+m_2+m_3}}{\partial x^{m_1}\partial y^{m_2}\partial z^{m_3}}(F,G)\right\|_{L^2(\sP)}\right)
\left\|\frac{\partial^5}{\partial x^5}(F,G)\right\|_{L^2(\sP)}
+ r^3+\xi r^2\right)}_{:=\operatorname{REST}} 
$$$$
$$$$
\geq B_{(\widetilde f_0+F,\widetilde g_0+G)}((\partial_x^4F,\partial_x^4G),(\partial_x^4F,\partial_x^4G))-\operatorname{Const}\cdot\operatorname{REST}
$$
$$\geq\int_{\sP} \Big( (\partial_{w_1}a_1)(\nabla( \widetilde f_0+F),\nabla( \widetilde g_0+G))(\partial_x^5F)^2+(\partial_{w_4}a_1)\partial_x^5F\partial_x^5G+(\partial_{w_1}a_4)\partial_x^5F\partial_x^5G+(\partial_{w_4}a_4)(\partial_x^5G)^2\Big)\, dx\, dy\, dz
$$$$
-\operatorname{Const}\cdot\operatorname{REST}$$
$$\stackrel{\operatorname{Prop.~}\ref{prop: simple inequality}}\geq
\int_{\sP}\frac{\widetilde v_{01}^2}
{|\nabla_{(y,z)} \widetilde f_0|^2+|\nabla_{(y,z)} \widetilde g_0|^2}
\Big((\partial_x^5 F)^2+(\partial_x^5 G)^2\Big)\,dx\,dy\,dz
-\operatorname{Const}\cdot \operatorname{REST}.
$$
Let $\rho_0\geq 0$  be defined by $$\rho_0=\left\|(F,G)\right\|_{L^2(\sP)}.$$
The following inequality holds if $m_1+m_2+m_3=5$ with $0\leq m_1\leq 4$, where $\epsilon>0$:
$$\left\|\frac{\partial^{m_1+m_2+m_3}}{\partial x^{m_1}\partial y^{m_2}\partial z^{m_3}}(F,G)\right\|_{L^2(\sP)}^2
$$
$$
\leq 
\epsilon  \left\|\frac{\partial^{5}}{\partial x^{5}}(F,G)\right\|_{L^2(\sP)}^2
+C_\epsilon  \left\|\frac{\partial^{5}}{\partial y^{5}}(F,G)\right\|_{L^2(\sP)}^2
+C_\epsilon  \left\|\frac{\partial^{5}}{\partial z^{5}}(F,G)\right\|_{L^2(\sP)}^2+\epsilon \rho_0^2;$$
see \eqref{eq: a Sobolev interpolation} (which remains true if the periodic variable $x_2$ is replaced by two periodic variables $(x_2,x_3)=(y,z)$).
Hence we get
$$
\int_\sP
-\frac{\partial^3}{\partial x^3}\Big(-\Div(\nabla (\widetilde g_0+G) \times (\nabla (\widetilde f_0+F)\times \nabla(\widetilde g_0+G)))+
\partial_f H(\widetilde f_0+F,\widetilde g_0+G)\Big)\cdot \frac{\partial^5 F}{\partial x^5} \,dx\,dy\,dz$$
$$
+\int_\sP
-\frac{\partial^3}{\partial x^3}\Big(-\Div((\nabla (\widetilde f_0+F)\times \nabla (\widetilde g_0+G))\times \nabla (\widetilde f_0+F))
+\partial_g H(\widetilde f_0+F,\widetilde g_0+G)\Big)\cdot \frac{\partial^5G}{\partial x^5}\,dx\,dy\,dz.
$$
\begin{equation}\label{eq: consequence of semi-coer bis}\geq
K_1||\partial_x^5(F,G)||_{L^2(\sP)}^2-\operatorname{Const} (r^3+\xi r^2)
-K_2\Big(||\partial_y^5(F,G)||_{L^2(\sP)}^2+||\partial_z^5(F,G)||_{L^2(\sP)}^2\Big)-\rho_0^{2}
\end{equation}
(taking $\epsilon>0$ small enough), where $K_1,K_2>0$ are additional constants.

\subsubsection{Fourth estimate from below}
Finally  set $\widetilde v_0=\nabla \widetilde f_0\times\nabla \widetilde g_0=(\widetilde v_{01},\widetilde v_{02},\widetilde v_{03})$. We have
$$
\int_\sP \Big(-\Div(\nabla (\widetilde g_0+G) \times (\nabla (\widetilde f_0+F)\times \nabla(\widetilde g_0+G)))+
\partial_f H(\widetilde f_0+F,\widetilde g_0+G)\Big) F\,dx\,dy\,dz
$$$$
+\int_\sP
\Big(-\Div((\nabla (\widetilde f_0+F)\times \nabla (\widetilde g_0+G))\times \nabla (\widetilde f_0+F))
+\partial_g H(\widetilde f_0+F,\widetilde g_0+G)\Big) G\,dx\,dy\,dz.
$$

After distributing products, the estimates are similar to the previous ones. We can immediately estimate the terms containing first-order partial derivatives of $H$ from below by $-\operatorname{Const} \xi r^2$ (see \eqref{eq: estimates in r square}).
Remaining terms in which the factors $F$ or $G$ (or both) appear at least three times
can also be bounded below by $-\operatorname{Const} r^3$.
Remaining terms in which $F$ or $G$ appears exactly once are bounded below by $-\operatorname{Const} \xi r^2$, thanks to \eqref{eq: two estimates}.
The terms that finally remain are such that the factors $F$ or $G$ (or both) appear exactly two times. 
Therefore, we get
$$\ldots\geq
B_{(\widetilde f_0,\widetilde g_0)}((F,G),(F,G))-\operatorname{Const} (r^3+\xi r^2)$$
$$\stackrel{\operatorname{~Thm }\ref{thm: quadratic part}}\geq
\frac{1}{2|\nabla \overline f|^2+2|\nabla \overline g|^2}\int_{\sP}\Big\{ 
\frac 1 {16}(\widetilde v_0\innpr \nabla F)^2+\frac 1{16}(\widetilde v_0\innpr \nabla G)^2+(1-O(\|\widetilde v_0'\|_{C(\overline \sP)}))\frac {\pi^2\min_{\overline \sP}  \widetilde v_{01}^2} {16L^2}(F^2+G^2) 
\Big\}\,dx\,dy\,dz
$$$$-\operatorname{Const} (r^3+\xi r^2)\,.$$
Hence
$$
\ldots\geq\frac 1 {2|\nabla \overline f |^2+2|\nabla \overline g|^2}
\int_{\sP} \frac {\pi^2\min_{\overline \sP}  \widetilde v_{01}^2} {32L^2}(F^2+G^2) 
\,dx\,dy\,dz
r-\operatorname{Const}(r^3+\xi r^2)
$$
\begin{equation}\label{eq: simplified semi-coer bis}
\geq K_1\rho_0^2-\operatorname{Const} (r^3+\xi r^2)
\end{equation}
for some constant $\operatorname{Const}>0$.
In all these estimates, given $\xi\in(0,1]$, we consider small enough $r>0$
(the positive constants $K_1,\operatorname{Const}$ being independent of $\xi,r>0$).

\subsection{Proof of Theorem \ref{thm: hydrodynamics}}
Let
$$s= 5,~~Y=Y_s=(H^s_{0,per}(\sP))^2,~~Y^*=Y_s^*=Y' \text{ (topological dual)},~~
$$
$$
~~V=V_s=(H^{s+2}_{0,per}(\sP))^2,~~V^*=V_s^*=V'=((H^{s+2}_{0,per}(\sP))')^2\,,$$
with the two canonical dualities $<\cdot,\cdot>_{Y\times Y^*}$ and $<\cdot,\cdot>_{V\times V^*}$.
The compact Sobolev injection $(H_{0,per}^s(\sP))^2\subset (C^{s-2}_{per}(\overline{\sP}))^2$ still holds.

We shall rely on Kato's theorem \ref{thm: Kato} by setting, for all $(F,G)\in Y$,
$$
A(F,G):=(\partial_{x_1}^s)^*\partial_{x_1}^{s-2}\widetilde{\operatorname{div}}(a(\nabla (\widetilde f_0+F),\nabla (\widetilde g_0 + G)))
-\lambda_1(-1)^s \partial_{x_2}^{s}\widetilde{\operatorname{div}}\partial_{x_2}\partial_{x_2}^{s-1}(a(\nabla (\widetilde f_0+F),\nabla (\widetilde g_0 + G)))
$$
\begin{equation}
\label{eq: choice of A bis}
-\lambda_1(-1)^s \partial_{x_3}^{s}\widetilde{\operatorname{div}}\partial_{x_3}\partial_{x_3}^{s-1}(a(\nabla (\widetilde f_0+F),\nabla (\widetilde g_0 + G)))
-\lambda_2\widetilde{\operatorname{div}}(a(\nabla (\widetilde f_0+F),\nabla (\widetilde g_0 + G)))
\end{equation}
$$
-(\partial_{x_1}^s)^*\partial_{x_1}^{s-2}(\nabla H)(\widetilde f_0+F,\widetilde g_0 + G)
+\lambda_1(-1)^s \partial_{x_2}^{s}\partial_{x_2}^{s}(\nabla H)(\widetilde f_0+F,\widetilde g_0 + G)
$$
\begin{equation}
\label{eq: choice of A ter}
+\lambda_1(-1)^s \partial_{x_3}^{s}\partial_{x_3}^{s}(\nabla H)(\widetilde f_0+F,\widetilde g_0 + G)
+\lambda_2(\nabla H)(\widetilde f_0+F,\widetilde g_0 + G)
\in V^* 
\end{equation}
for two constants $\lambda_1,\lambda_2>0$ to be chosen later.
Each term in \eqref{eq: choice of A bis} has to be interpreted as in \eqref{eq: choice of A}. In  \eqref{eq: choice of A ter},
$(-1)^s\partial_{x_i}^s:L^2(\sP)^2=(L^2(\sP)^2)'\rightarrow V'=V^*$ is the dual operator of  $\partial_{x_i}^s: V\rightarrow  L^2(\sP)^2$, $i\in\{2,3\}$.

\begin{prop}
\label{prop: verification semi-coercivity bis}
There exist $\lambda_1,\lambda_2,L>0$ such that for all $W$ in $V$ with $||W||_Y$ small enough:
$$\langle W,AW\rangle_{V\times V^*}\geq L ||W||_Y^2
\, -\, L^{-1}||W||_Y^3\,.$$
\end{prop}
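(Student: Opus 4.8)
The plan is to mimic the proof of Proposition~\ref{prop: verification semi-coercivity} of the toy case, the single lower estimate \eqref{eq: conclusion} used there being now replaced by the four lower estimates \eqref{eq: consequence of semi-coer bis}, \eqref{eq: conclusion bis}, \eqref{eq: conclusion ter} and \eqref{eq: simplified semi-coer bis} of the previous subsection, one for each of the four blocks of terms forming $A$ in \eqref{eq: choice of A bis}--\eqref{eq: choice of A ter}. Fix $W=(F,G)\in V$, put $r:=\|W\|_Y$ (taken small, so that $W\in\partial B(0,r)$ and the estimates from below apply) and $\rho_0:=\|W\|_{L^2(\sP)}$, and write $\mathcal{E}:=-\widetilde{\operatorname{div}}\big(a(\nabla(\widetilde f_0+F),\nabla(\widetilde g_0+G))\big)+(\nabla H)(\widetilde f_0+F,\widetilde g_0+G)$ for the left-hand side of equation \eqref{eq: deux ter}.

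First I would expand $\langle W,AW\rangle_{V\times V^*}$ exactly as in the toy case. Each of the four blocks of $A$ --- the one carrying $(\partial_{x_1}^s)^*\partial_{x_1}^{s-2}$, the two carrying $\lambda_1\partial_{x_i}^{s}\cdots\partial_{x_i}^{s}$ for $i=2,3$, and the one carrying $\lambda_2$ --- becomes, once its matching $\nabla H$-term from \eqref{eq: choice of A ter} is included, the $V$-dual of an explicit operator applied to $W$; moving $\widetilde{\operatorname{div}}$ and the high-order $x$-, $y$-, $z$-derivatives back onto $W$ (licit for $W\in V$ after approximation by smooth functions, by the evident analogues of Corollary~\ref{cor: controle a} and of the argument in the proof of Proposition~\ref{prop: Aw=0 w=0}) gives
$$\langle W,AW\rangle_{V\times V^*}=\mathcal I_x+\lambda_1\mathcal I_y+\lambda_1\mathcal I_z+\lambda_2\mathcal I_0,$$
where $\mathcal I_x:=-\int_\sP(\partial_x^3\mathcal{E})\cdot(\partial_x^5(F,G))\,dx\,dy\,dz$, $\mathcal I_y:=\int_\sP(\partial_y^5\mathcal{E})\cdot(\partial_y^5(F,G))\,dx\,dy\,dz$, $\mathcal I_z:=\int_\sP(\partial_z^5\mathcal{E})\cdot(\partial_z^5(F,G))\,dx\,dy\,dz$ and $\mathcal I_0:=\int_\sP\mathcal{E}\cdot(F,G)\,dx\,dy\,dz$ are precisely the integrals bounded from below in the Third, First, Second and Fourth estimates from below, respectively; the $\nabla H$-terms of $A$ are exactly what makes $\mathcal{E}$, and not merely $-\widetilde{\operatorname{div}}(a(\nabla(\widetilde f_0+F),\nabla(\widetilde g_0+G)))$, occur there.

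Next, applying \eqref{eq: consequence of semi-coer bis} to $\mathcal I_x$, \eqref{eq: conclusion bis} to $\mathcal I_y$, \eqref{eq: conclusion ter} to $\mathcal I_z$ and \eqref{eq: simplified semi-coer bis} to $\mathcal I_0$, with the constants $K_1,K_2>0$ occurring there, yields
$$\langle W,AW\rangle_{V\times V^*}\ \geq\ K_1\|\partial_x^5W\|_{L^2(\sP)}^2+(\lambda_1K_1-K_2)\big(\|\partial_y^5W\|_{L^2(\sP)}^2+\|\partial_z^5W\|_{L^2(\sP)}^2\big)+(\lambda_2K_1-1)\rho_0^2-\operatorname{Const}(1+\lambda_1+\lambda_2)(r^3+\xi r^2).$$
I would then fix $\lambda_1>K_2/K_1$ and $\lambda_2>1/K_1$, both depending only on $K_1,K_2$, so that the first three terms form a fixed positive combination of $\|\partial_x^5W\|_{L^2(\sP)}^2$, $\|\partial_y^5W\|_{L^2(\sP)}^2$, $\|\partial_z^5W\|_{L^2(\sP)}^2$ and $\rho_0^2$. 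By the analogues, for functions periodic in the two variables $y$ and $z$, of the interpolation inequality \eqref{eq: a Sobolev interpolation} and of Lemma~\ref{eq: mixed derivatives} (as already used in the Third estimate), this combination dominates $c\|W\|_Y^2$ for some $c>0$; hence, since $r=\|W\|_Y$,
$$\langle W,AW\rangle_{V\times V^*}\ \geq\ c\|W\|_Y^2-\operatorname{Const}(1+\lambda_1+\lambda_2)\big(\|W\|_Y^3+\xi\|W\|_Y^2\big).$$
Choosing $\xi\in(0,1]$ small enough (a smallness of the kind permitted in Theorem~\ref{thm: hydrodynamics}, since $\lambda_1$ and $\lambda_2$ were fixed beforehand) to absorb $\operatorname{Const}(1+\lambda_1+\lambda_2)\xi\|W\|_Y^2$ into $\tfrac12 c\|W\|_Y^2$ then leaves $\langle W,AW\rangle_{V\times V^*}\geq\tfrac c2\|W\|_Y^2-\operatorname{Const}(1+\lambda_1+\lambda_2)\|W\|_Y^3$, which is the claim with $L:=\min\{\tfrac c2,\,(\operatorname{Const}(1+\lambda_1+\lambda_2))^{-1}\}$.

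The hard part is the first step: confirming that $A$, as defined in \eqref{eq: choice of A bis}--\eqref{eq: choice of A ter}, has been engineered so that $\langle W,AW\rangle$ equals exactly $\mathcal I_x+\lambda_1\mathcal I_y+\lambda_1\mathcal I_z+\lambda_2\mathcal I_0$. This needs the two spare derivatives of $W\in V$ --- guaranteeing that $\partial_{x_i}^{s}(a(\nabla(\widetilde f_0+F),\nabla(\widetilde g_0+G)))$ and $\partial_{x_1}^{s-2}\widetilde{\operatorname{div}}(a(\nabla(\widetilde f_0+F),\nabla(\widetilde g_0+G)))$ lie in $L^2$, and that the repeated integrations by parts in $x_1,x_2,x_3$ (including commuting $\widetilde{\operatorname{div}}$ past the derivatives on smooth approximants) are admissible --- together with the somewhat delicate bookkeeping matching the four $\nabla H$-blocks of \eqref{eq: choice of A ter} with the four $\widetilde{\operatorname{div}}(a)$-blocks of \eqref{eq: choice of A bis}. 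Once this identification is secured, the remainder --- the choice of $\lambda_1,\lambda_2$ and then of $\xi$ --- is elementary and parallels the end of the proof of Proposition~\ref{prop: verification semi-coercivity}.
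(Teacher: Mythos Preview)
Your proposal is correct and follows essentially the same route as the paper's own proof: expand $\langle W,AW\rangle_{V\times V^*}$ into four integrals, apply the four lower estimates \eqref{eq: consequence of semi-coer bis}, \eqref{eq: conclusion bis}, \eqref{eq: conclusion ter}, \eqref{eq: simplified semi-coer bis}, use the equivalence of norms on $Y$, and then fix $\lambda_1,\lambda_2$ large and $\xi$ small. Your bookkeeping is in fact slightly tidier than the paper's, since by packaging the $\nabla H$-terms into $\mathcal{E}$ you make the four integrals coincide exactly with the left-hand sides of the four estimates, whereas the paper first strips off the $\nabla H$-contributions as an $O(\xi r^2)$ error and then invokes the same estimates.
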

\begin{proof}
As in Corollary \ref{cor: controle a},
\begin{equation}
\label{eq: controle a bis}
\partial_{x_2}^{s}(a(\nabla (\widetilde f_0+F),\nabla (\widetilde g_0 + G)))\in L^2(\sP)^6
~~\text{ and }~~
\partial_{x_3}^{s}(a(\nabla (\widetilde f_0+F),\nabla (\widetilde g_0 + G)))\in L^2(\sP)^6
\end{equation}
for all $(F,G)\in V$.
Write 
$$W=(F,G)\in \mathbb R^2\,,~~  \nabla W=\Big(\nabla F,\nabla  G\Big)\in \mathbb R^6
~~\text{ and }~~\widetilde{ \nabla}W=\Big(\nabla (\widetilde f_0+F),\nabla (\widetilde g_0 + G)\Big)\in \mathbb R^6,$$
and let $\lambda_1,\lambda_2>0$ be chosen later.
For $s=5$, $W\in V$, $\rho_0=\left\|(F,G)\right\|_{L^2(\sP)}$ and $r:=||W||_{Y}$ small enough:
$$\langle W,AW\rangle_{V\times V^*}
=\left\langle W,(\partial_{x_1}^s)^*\partial_{x_1}^{s-2}\widetilde{\operatorname{div}}(a(\widetilde{\nabla }W))
-\lambda_1(-1)^s \partial_{x_2}^{s}\widetilde{\operatorname{div}}\partial_{x_2}\partial_{x_2}^{s-1}(a(\widetilde{\nabla }W))\right.
$$$$
-\left.\lambda_1(-1)^s \partial_{x_3}^{s}\widetilde{\operatorname{div}}\partial_{x_3}\partial_{x_3}^{s-1}(a(\widetilde{\nabla }W))
-\lambda_2\widetilde{\operatorname{div}}(a(\widetilde{\nabla} W))\right\rangle _{V\times V^*}
$$$$
+\left\langle W,-(-1)^s\partial_{x_1}^s\partial_{x_1}^{s-2}(\nabla H)(\widetilde f_0+F,\widetilde g_0 + G)
+\lambda_1(-1)^s \partial_{x_2}^{s}\partial_{x_2}^{s}(\nabla H)(\widetilde f_0+F,\widetilde g_0 + G)\right.
$$$$
+\left.\lambda_1(-1)^s \partial_{x_3}^{s}\partial_{x_3}^{s}(\nabla H)(\widetilde f_0+F,\widetilde g_0 + G)
+\lambda_2(\nabla H)(\widetilde f_0+F,\widetilde g_0 + G)\right\rangle _{V\times V^*}
$$$$
\stackrel {\eqref{eq: controle a bis}} =\int_{\sP}\left(\partial_{x_1}^s W\right)\cdot\left(\partial_{x_1}^{s-2}\widetilde{\operatorname{div}}(a(\widetilde{\nabla} W))\right)\, dx_1dx_2
+\lambda_1\int_{\sP}\left(\partial_{x_2}^{s}\nabla W\right)\cdot \left(\partial_{x_2}^{s}(a(\widetilde{\nabla} W))\right)\, dx_1dx_2
$$$$
+\lambda_1\int_{\sP}\left(\partial_{x_3}^{s}\nabla W\right)\cdot \left(\partial_{x_3}^{s}(a(\widetilde{\nabla} W))\right)\, dx_1dx_2
+\lambda_2\int_{\sP}(\nabla W)\cdot a(\widetilde{\nabla} W)\, dx_1dx_2
	-\operatorname{Const}(1+2\lambda_1+\lambda_2)\xi r^2
$$$$
\stackrel{\eqref{eq: consequence of semi-coer bis},\eqref{eq: conclusion bis},\eqref{eq: conclusion ter},\eqref{eq: simplified semi-coer bis}}
\geq
K_1||\partial_x^5(F,G)||_{L^2(\sP)}^2
-K_2\Big(||\partial_y^5(F,G)||_{L^2(\sP)}^2+||\partial_z^5(F,G)||_{L^2(\sP)}^2\Big)-\rho_0^{2}
$$$$
+ \lambda_1 K_1||\partial_y^5(F,G)||_{L^2(\sP)}^2
+ \lambda_1 K_1||\partial_z^5(F,G)||_{L^2(\sP)}^2+\lambda_2 K_1\rho_0^2
-\operatorname{Const}(1+2\lambda_1+\lambda_2) (r^3+\xi r^2)\,.
$$
Again the norms defined by the square roots of
$\sum_{i+j+k\leq s}||\partial_{x_1}^i\partial_{x_2}^j\partial_{x_3}^kW||_{L^2(\sP)}^2$ 
and 
$||W||_{L^2(\sP)}^2+||\partial_{x_1}^sW||_{L^2(\sP)}^2+||\partial_{x_2}^sW||_{L^2(\sP)}^2 +||\partial_{x_3}^sW||_{L^2(\sP)}^2$ 
are equivalent on $Y$.
Thus
$$\langle W,AW\rangle_{V\times V^*}\geq L||W||_Y^2
\, -\, L^{-1} ||W||_Y^3$$
	for  $\lambda_1,\lambda_2>0$ chosen large enough (and by considering small enough $\xi>0$ and then small enough $r>0$).
\end{proof}

By Proposition \ref{prop: verification semi-coercivity bis},  there exist $\lambda_1,\lambda_2, r>0$ such that
$$\beta:=\inf\left\{<W,AW>_{V\times V^*}:\, W\in V\,,  ||W||_Y=r \right\}\geq \frac L 2 r^2>0$$
if $L^{-1}r^3\leq \frac 1 2 L r^2$.
By an analogous result to Lemma \ref{lemma: weak sequential continuity},
$A:Y \rightarrow  V^*$  is weakly sequentially continuous.
The hypotheses of Theorem \ref{thm: Kato} being satisfied with  the closed ball
$$K=\{W\in Y:\, ||W||_Y\leq r\},$$ 
$A(K)$ contains the closed ball centered at the origin $0\in Y^*$ of radius $\beta r^{-1}>0$ in $Y^*$.
In particular there exists $U=(F,G)\in K$ such that $AU=0$.
By an analogous result to Proposition \ref{prop: Aw=0 w=0}, 
$$\widetilde{\operatorname{div}}\Big(a(\widetilde{\nabla} U)\Big)-(\nabla H)(\widetilde f_0+F,\widetilde g_0+G)=0.$$

\section{Uniqueness and minimization principle}
\label{section: minimizer}
In this section, we prove that the solutions obtained in Theorems \ref{thm: hydrodynamics} and \ref{thm: main} are locally unique, as local minimizers of strictly convex functionals.\medskip

Let us start with the toy model. Thanks to hypothesis \eqref{eq: symmetry}, there exists $\Gamma\in C^{s+1}(\mathbb R^4,\mathbb R)$, 
$$(x_1,x_2,z_1,z_2)=(x,z)\rightarrow \Gamma(x,z),$$ 
such that  
$$a(x,y)=(a_1(x,z),a_2(x,z))=\Big(\partial_{z_1}\Gamma(x,z),\partial_{z_2}\Gamma(x,z)\Big).$$ 
It is therefore possible to consider the integral functional
$$u\rightarrow \int_{\sP}\Big(\Gamma(x,\nabla u(x))+h(x)u(x)\Big)\, dx_1dx_2,
~~u\in H^s_{0,per}(\sP)\subset C^3(\overline{\sP})~~(s\geq 5).$$
As convergence in $ H^s_{0,per}(\sP)$ implies the uniform convergence of the two first-order partial derivatives, the integral functional is of class $C^2$, its Fr\'echet derivative at $u\in H^s_{0,per}(\sP)$ being the linear map
$$v\rightarrow \int_{\sP}\Big(a(x,\nabla u(x))\cdot \nabla v+h(x)v(x)\Big)\, dx_1dx_2
=\int_{\sP}\Big(-\operatorname{div}\Big(a(x,\nabla u(x))\Big) v+h(x)v(x)\Big)\, dx_1dx_2,
$$
for $v\in H^s_{0,per}(\sP)$.
Hence every critical point of the integral functional satisfies \eqref{eq: the toy equation}.
Its second order Fr\'echet derivative at $u\in H^s_{0,per}(\sP)$ is the symmetric bilinear map
$$(v,w)\rightarrow \int_{\sP}\sum_{i=1}^2\sum_{p=1}^2(\partial_{z_p}a_i)(x,\nabla u(x)) (\partial_{x_i}v)(x) (\partial_{x_p}w)(x)\, dx_1dx_2
$$
for $(v,w)\in H^s_{0,per}(\sP)^2$.
By \eqref{eq: hyp on a}, the integral functional is strictly convex in a small open ball in $H^s_{0,per}(\sP)$ centered at the origin, because
$$\int_{\sP}\sum_{i=1}^2\sum_{p=1}^2(\partial_{z_p}a_i)(x,\nabla u(x)) (\partial_{x_i}v)(x) (\partial_{x_p}v)(x)\, dx_1dx_2
\geq\rho \int_{\sP}(\partial_{x_1}v)^2 \, dx_1dx_2>0$$
if $v$ does not vanish, thanks to the boundary condition $v(0,x_2)=v(1,x_2)=0$. 
If $r>0$ is small enough, the solution obtained in Theorem \ref{thm: main} is therefore locally unique and a local minimizer.\medskip

We now turn to the hydrodynamic problem. As observed in \cite{BuWa}, the integral functional already introduced in \eqref{eq: integral functional},
$$(F,G)\rightarrow \int_{\sP}\left\{ \frac12\left|\nabla(\widetilde f_0+F)\times \nabla( \widetilde g_0+G)\right|^2+H(\widetilde f_0+F,\widetilde g_0+G)\right\}\,dx\,dy\,dz
$$
for $(F,G)\in H^5_{0,per}(\sP)^2\subset C^3(\overline{\sP})^2$,
is strictly convex in a small open ball $H^5_{0,per}(\sP)^2$ centered at the origin, thanks to \eqref{eq: estimates in r square} (with $\xi>0$ first chosen small enough) and Theorem \ref{thm: quadratic part}. Hence, if $r>0$ is small enough, the solution obtained in Theorem \ref{thm: hydrodynamics} is also locally unique and a local minimizer. Note that it is also possible to treat this hydrodynamic problem by a global minimization approach \cite{Bu:2012}, leading to generalized flows.

\end{document}